\numberwithin{equation}{section}
\newtheorem{thm}{Theorem}[section]
\newtheorem{lma}[thm]{Lemma}
\newtheorem{cor}[thm]{Corollary}
\newtheorem{defn}[thm]{Definition}
\newtheorem{prop}[thm]{Proposition}
\newtheorem{ques}[thm]{Question}
\renewcommand{\ge}{\geqslant}
\renewcommand{\geq}{\geqslant}
\renewcommand{\leq}{\leqslant}
\renewcommand{\H}{\text{H}}
\renewcommand{\P}{\text{P}}
\title{On the $L^q$-spectrum of planar self-affine measures}
\author{Jonathan M. Fraser\\ \\
\emph{Mathematics Institute, Zeeman Building,}\\ \emph{University of Warwick, Coventry, CV4 7AL, UK.}\\ \emph{e-mail: jon.fraser32@gmail.com}}
\begin{document}
\maketitle

\begin{abstract}
We study the dimension theory of a class of planar self-affine multifractal measures.  These measures are the Bernoulli measures supported on box-like self-affine sets, introduced by the author, which are the attractors of iterated function systems consisting of contracting affine maps which take the unit square to rectangles with sides parallel to the axes.  This class contains the self-affine measures recently considered by Feng and Wang as well as many other measures.  In particular, we allow the defining maps to have non-trivial rotational and reflectional components. Assuming the rectangular open set condition, we compute the $L^q$-spectrum by means of a $q$-modified singular value function

A key application of our results is a \emph{closed form expression} for the $L^q$-spectrum in the case where there are no mappings that switch the coordinate axes.  This is useful for computational purposes and also allows us to prove differentiability of the $L^q$-spectrum at $q=1$ in the more difficult `non-multiplicative' situation.  This has applications concerning the Hausdorff, packing and entropy dimension of the measure as well as the Hausdorff and packing dimension of the support.  Due to the possible inclusion of axis reversing maps, we are led to extend some results of Peres and Solomyak on the existence of $L^q$-spectrum of self-similar measures to the graph-directed case.
\\ \\
\emph{Mathematics Subject Classification} 2010:  primary: 28A80, 37C45, secondary: 28A78, 15A18, 26A24.
\\ \\
\emph{Key words and phrases}:  $L^q$-spectrum, self-affine measure, modified singular value function, Hausdorff dimension.
\end{abstract}

\setcounter{tocdepth}{2}
\tableofcontents


\section{Introduction}

Since smooth non-conformal dynamics are locally self-affine, self-affine sets and measures are an important building block towards understanding naturally occurring physical phenomena exhibiting a non-conformal structure and have thus attracted a great deal of attention in both pure and applied fields. In this paper we conduct a detailed dimensional analysis of a new class of planar self-affine multifractal measures, which are the Bernoulli measures supported on box-like self-affine sets, see \cite{me_box}.  Our measures generalise those considered by King \cite{king} and, in particular, by Feng and Wang \cite{fengaffine} by allowing the mappings in the IFS to have non-trivial rotational and reflectional components.  We introduce a $q$-modified singular value function, which is a natural multifractal generalisation of the modified singular value function defined in \cite{me_box} and, assuming a natural rectangular open set condition, we compute the $L^q$-spectrum. The $L^q$-spectrum of a measure $\mu$ is an important quantitative description of the global fluctuations of $\mu$ and has been linked to many geometric characteristics of the measure, mostly concerning dimension. 
\\ \\
A major difference between our class of sets and measures and their orientation preserving counterparts is brought to light in one of our main applications.  In the `axis preserving case' we are able to derive a closed form expression for the $L^q$-spectrum, assuming a mild technical assumption.  This is particularly important for computational purposes as one does not have to rely on `$k$th level approximations' which get increasingly more computationally inefficient to compute.  It also allows us to obtain precise results on the differentiability of the $L^q$-spectrum, which has applications concerning Hausdorff dimension, see \cite{ngai}.  It was observed in \cite{fengaffine} that if the maps are orientation preserving and the stronger contraction is always in the same direction, then the $L^q$-spectrum is differentiable at $q=1$ provided the $L^q$-spectra of the relevant projections are differentiable at this value. Using some arguments from convex analysis, we extend this to include the `non-multiplicative' situation where the maps do not have to be orientation preserving and the stronger contraction does not always have to be in the same direction.  This allows us to compute the Hausdorff dimension of a much larger class of self-affine measures. As a second application, we note that the Legendre transform of the $L^q$-spectrum gives an upper bound for the multifractal Hausdorff and packing spectra, and can be used to compute the packing dimension, and give non-trivial lower bounds for the Hausdorff dimension of the support of the measure.
\\ \\
A second evident difference is that the orthogonal projections of the measures need not be self-similar.  Indeed, if one of the defining maps sends vertical lines to horizontal lines and vice versa, then the orthogonal projections of the measure are a pair of intertwined \emph{graph-directed} self-similar measures.  This introduces some technical difficulties in the proofs and, in order to obtain our results, we are forced to prove that the $L^q$-spectrum of such measures exists, regardless of separation conditions, thus generalizing results of Peres and Solomyak concerning the 1-vertex case \cite{exists}.  This may have useful applications in other situations.

\subsection{The $L^q$-spectrum and the multifractal formalism}

Let $\mathcal{P}(\mathbb{R}^n)$ be the set of compactly supported regular Borel probability measures on $\mathbb{R}^n$ and let $\mu \in \mathcal{P}(\mathbb{R}^n)$ with support denoted by $\text{supp} \, \mu$. The \emph{lower and upper $L^q$-spectrum} of $\mu$ are defined by
\[
\underline{\tau}_\mu(q) = \underline{\lim}_{\delta \to 0} \frac{\log \int_{\text{supp} \, \mu} \mu\big(B(x,\delta)\big)^{q-1} \, d\mu(x)}{-\log \delta}
\]
and
\[
\overline{\tau}_\mu(q) = \overline{\lim}_{\delta \to 0} \frac{\log \int_{\text{supp} \, \mu} \mu\big(B(x,\delta)\big)^{q-1}\, d\mu(x)}{-\log \delta}
\]
respectively, with $q \in \mathbb{R}$. If $\underline{\tau}_\mu(q) = \overline{\tau}_\mu(q)$, then we write $\tau_\mu(q)$ to denote the common value. The $L^q$-spectrum is sometimes referred to as the \emph{$L^q$-dimensions} or the \emph{generalised dimensions}.   It is a simple consequence of H\"older's inequality that $\underline{\tau}_\mu(q)$ and $\overline{\tau}_\mu(q)$ are convex and thus continuous on $(0,\infty)$.  It is also easy to see that they are decreasing on $[0,\infty)$ and equal to 0 at $q=1$.  Moreover, they are Lipschitz continuous on $[t,\infty)$ for any $t>0$ and differentiable on $(0,\infty)$ at all but at most countably many points.  It is related to the packing and covering multifractal box dimensions, which are alternative attempts to describe the global fluctuations of a measure, see \cite{multifractalformalism} for the definitions.  Write $\underline{T}_{\P, \mu}(q)$ and $\overline{T}_{\P, \mu}(q)$ for the lower and upper packing multifractal box dimensions of $\mu$, and $\underline{T}_{\text{C}, \mu}(q)$ and $\overline{T}_{\text{C}, \mu}(q)$ for the lower and upper covering multifractal box dimensions of $\mu$.
\begin{prop}[Relationships between the dimension functions]
Let $\mu \in \mathcal{P}(\mathbb{R}^n)$.  Then
\begin{itemize}
\item[(1)] For $q<0$, we have
\[
\underline{\tau}_\mu(q) \leq \underline{T}_{\text{C}, \mu}(q) = \underline{T}_{\P, \mu}(q)
\]
and
\[
\overline{\tau}_\mu(q) \leq \overline{T}_{\text{C}, \mu}(q) = \overline{T}_{\P, \mu}(q);
\]
\item[(2)] For $q \in [0,1]$, we have
\[
\underline{\tau}_\mu(q) = \underline{T}_{\text{C}, \mu}(q) = \underline{T}_{\P, \mu}(q)
\]
and
\[
\overline{\tau}_\mu(q) = \overline{T}_{\text{C}, \mu}(q) = \overline{T}_{\P, \mu}(q);
\]
\item[(3)] For $q>1$, we have
\[
\underline{T}_{\text{C}, \mu}(q) \leq  \underline{T}_{\P, \mu}(q) = \underline{\tau}_\mu(q)
\]
and
\[
\overline{T}_{\text{C}, \mu}(q)  \leq  \overline{T}_{\P, \mu}(q) = \overline{\tau}_\mu(q);
\]
\item[(4)] If $\mu$ is doubling, then, for all $q \in \mathbb{R}$, we have
\[
\underline{T}_{\text{C}, \mu}(q) =  \underline{T}_{\P, \mu}(q) = \underline{\tau}_\mu(q)
\]
and
\[
\overline{T}_{\text{C}, \mu}(q)  =  \overline{T}_{\P, \mu}(q) = \overline{\tau}_\mu(q).
\]
\end{itemize}
\end{prop}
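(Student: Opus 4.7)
The proof proceeds by comparing the integral quantity $S_\delta(q):=\int\mu(B(x,\delta))^{q-1}\,d\mu(x)$ underlying $\underline{\tau}_\mu,\overline{\tau}_\mu$ with the centered packing and covering $q$-sums $M_\delta^q(\mu),N_\delta^q(\mu)$ underlying the multifractal box dimensions (as in the commented-out definitions). The plan is to estimate each pair of quantities against each other up to multiplicative constants and bounded dilations of the radius, and then to track carefully how the direction of every inequality depends on the signs of $q$ and $q-1$. Since all conclusions are read off from logarithmic limits, radial dilations $\delta\mapsto c\delta$ and fixed multiplicative constants drop out.

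The first step is the comparison between $S_\delta(q)$ and $M_\delta^q(\mu)$. For any centered $\delta$-packing $\{B(x_i,\delta)\}_i$ of $\text{supp}\,\mu$, disjointness gives
\[
S_\delta(q)\;\ge\;\sum_i\int_{B(x_i,\delta)}\mu(B(x,\delta))^{q-1}\,d\mu(x).
\]
For $x\in B(x_i,\delta)$ the inclusions $B(x_i,\delta)\subset B(x,2\delta)$ and $B(x,\delta)\subset B(x_i,2\delta)$ sandwich $\mu(B(x,\delta))$ between $\mu(B(x_i,\delta))$ and $\mu(B(x_i,2\delta))$ once one of the radii is doubled. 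Raising to the power $q-1$ preserves or reverses these inequalities according to the sign of $q-1$, and after integrating and taking the supremum over packings one obtains comparisons of the form $S_{c\delta}(q)\gtrsim M_\delta^q(\mu)$ or its reverse, with $c\in\{1,2\}$. Passing to the $\log/\log$ limit produces the $\tau_\mu$-versus-$T_{\P,\mu}$ content of (1)--(3).

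Dually, for any centered $\delta$-cover the Besicovitch covering theorem extracts a sub-packing of bounded multiplicity, so the previous estimate can be run in the opposite direction to relate $S_\delta(q)$ and $N_\delta^q(\mu)$. The equality $T_{\text{C},\mu}=T_{\P,\mu}$ in (1) and (2) then follows from the classical observation that every maximal centered $\delta$-packing produces a centered $3\delta$-cover and that every centered cover contains a Besicovitch sub-packing: the resulting comparison of the sums $\sum\mu(B_i)^q$ loses only the Besicovitch constant and a fixed radial dilation, both of which vanish at the exponential scale. For $q>1$ in (3) the same reasoning yields only the one-sided inequality $T_{\text{C},\mu}\le T_{\P,\mu}$, because the supremum in $M_\delta^q$ no longer controls the infimum in $N_\delta^q$ once the $q$th power amplifies large balls. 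Part (4) is immediate: doubling implies $\mu(B(x,c\delta))\asymp\mu(B(x,\delta))$ and absorbs every radial dilation appearing above into a multiplicative constant, collapsing all six quantities to a common value for every $q\in\mathbb{R}$.

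The step I expect to be the main technical obstacle is the case $q<0$, where negative exponents amplify the measure of atypically thin balls: one must insist that the packings and covers remain centered on $\text{supp}\,\mu$ throughout, and check that the Besicovitch selection preserves this property, so that the reverse comparison needed for both $\underline{T}_{\text{C},\mu}=\underline{T}_{\P,\mu}$ and its upper analogue in (1) actually closes up. The remaining work is bookkeeping on the signs of $q$ and $q-1$ across the four cases.
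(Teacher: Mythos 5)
The paper gives no proof of this proposition at all --- it simply cites Peres--Solomyak \cite{exists}, Olsen \cite{multifractalformalism}, and the survey \cite{multigeom}. So you are filling in a proof the paper deliberately outsources, and your blueprint (disjointness of packing balls, the containments $B(x_i,\delta)\subset B(x,2\delta)$ and $B(x,\delta)\subset B(x_i,2\delta)$, Besicovitch to pass between covers and packings, and sign-tracking in the exponent $q-1$) is indeed the approach taken in those references. To that extent you are on the right track.

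However, the central claim of the sketch --- that ``after integrating and taking the supremum over packings one obtains comparisons of the form $S_{c\delta}(q)\gtrsim M_\delta^q(\mu)$ or its reverse'' purely by keeping track of the sign of $q-1$ --- does not close by itself, and this is exactly where the content of the proposition lies. Take $q\in[0,1]$, so $q-1\le 0$. For a centered $\delta$-packing $\{B(x_i,\delta)\}$, disjointness and $B(x,\delta)\subset B(x_i,2\delta)$ give
\[
S_\delta(q)\ \ge\ \sum_i\int_{B(x_i,\delta)}\mu(B(x,\delta))^{q-1}\,d\mu(x)\ \ge\ \sum_i\mu\bigl(B(x_i,2\delta)\bigr)^{q-1}\mu\bigl(B(x_i,\delta)\bigr),
\]
and since $q-1\le 0$ the last expression is \emph{bounded above} by $\sum_i\mu(B(x_i,\delta))^q$, not below; so the packing-side estimate gives only $\tau\le T_{\P}$, never $\tau\ge T_{\P}$. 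Symmetrically, the other containment gives $\underline{\tau}\le\underline{T}_{\P}$ from a maximal packing. The missing direction $\tau\ge T_{\P}$ (and the equality $T_{\text{C}}=T_{\P}$) in this range is where the real work is: one has to pass through covers, and after Besicovitch the cover balls are dilated, which changes $\sum_j\mu(B(y_j,c\delta))^q$ in a way that is \emph{not} automatically comparable to $N_{c\delta}^q$ or $M_{c\delta}^q$. Controlling this dilation is precisely what distinguishes $q\in[0,1]$ (where all three agree) from $q>1$ (where only $T_{\text{C}}\le T_{\P}=\tau$ survives), and it depends on the concavity/convexity of $t\mapsto t^q$, not merely on the sign of $q-1$. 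Your one-line explanation of why the $q>1$ equality breaks (``amplifies large balls'') gestures at this but does not supply the estimate. Likewise you identify $q<0$ as the obstacle and assert that the Besicovitch step ``closes up,'' but you don't actually verify the equality $T_{\text{C}}=T_{\P}$ there; the negative exponent makes small balls dominate both the packing supremum and the covering infimum in asymmetric ways, and this needs an argument, not just the observation that Besicovitch preserves centering. In summary: correct skeleton, but the dilation estimate that makes the inequalities run the right way in each of the four regimes is the heart of the matter and is missing from the sketch.
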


\begin{proof}
For $q\geq 0$, see \cite{exists} and the references therein.  For $q<0$, see, for example, \cite{multifractalformalism, multigeom}.  Part (4) is discussed in \cite{multigeom}.
\end{proof}

One of the key properties of the dimension functions discussed above is their relationship with multifractal spectra.  With the desire to study the local structure of $\mu$ we form \emph{multifractal decomposition sets} $\Delta_\alpha$, defined by
\begin{equation}
\Delta_\alpha = \Big\{x \in F  :  \dim_{\text{loc}} \mu(x)  = \alpha  \Big\}
\end{equation}
for $\alpha \ge 0$, where $\dim_{\text{loc}} \mu(x)$ is the local dimension of $\mu$ at $x$, if it exists.  Letting $\Delta_\text{x}$ denote the set of points which do not have a local dimension we have formed a multifractal decomposition of the support of $\mu$, $\text{supp} \mu = \Big(\bigcup_{\alpha\geq 0}\Delta_\alpha\Big) \cup \Delta_\text{x}$.  The Hausdorff and packing multifractal spectrum functions, $f_{\H,\mu}$ and $f_{\P,\mu}$, are defined by
\[
 f_{\H,\mu}(\alpha) =   \dim_{\H}  \Delta_\alpha
\]
and
\[
 f_{\P,\mu}(\alpha) =   \dim_{\P}  \Delta_\alpha
\]
for $\alpha \ge 0$, where $\dim_{\H}$ and $\dim_{\P}$ denote the Hausdorff and packing dimensions respectively.  The Hausdorff and packing spectrum are often very difficult to compute and are only known for a few classes of measures, mostly with a self-conformal structure.  A common approach is to study the validity of the \emph{multifractal formalism}, which states, roughly speaking, that the multifractal spectra are equal to the Legendre transform $(\cdot^*)$ of an appropriately defined moment scaling function.

\begin{prop} \label{upperbounds}
Let $\mu \in \mathcal{P}(\mathbb{R}^n)$ be compactly supported.  Then, for all $\alpha \geq 0$, we have
\[
\begin{array}{ccccc}
& &                                                                                f_{\text{\emph{P}},\mu}(\alpha)                                    & &   \\
 &                                 \rotatebox[origin=c]{45}{$\leq$}          & &                \rotatebox[origin=c]{315}{$\leq$} &   \\
 f_{\text{\emph{H}},\mu}(\alpha)                                          &  &            & &             \overline{T}^*_{\P,\mu}(\alpha)                          \\
 &                                 \rotatebox[origin=c]{315}{$\leq$}       & &                  \rotatebox[origin=c]{45}{$\leq$} &  \\
& &                                                                        \underline{T}^*_{\P,\mu}(\alpha)                        & & 
\end{array}
\]
\end{prop}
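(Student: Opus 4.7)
The plan is to establish the four inequalities in the displayed diamond separately; three of them are essentially formal and one invokes the classical multifractal formalism upper bound. The inequality $f_{\H,\mu}(\alpha) \leq f_{\P,\mu}(\alpha)$ is immediate from the general set-theoretic fact $\dim_{\H} E \leq \dim_{\P} E$ applied to $E = \Delta_\alpha$. The inequality $\underline{T}^*_{\P,\mu}(\alpha) \leq \overline{T}^*_{\P,\mu}(\alpha)$ follows from the pointwise bound $\underline{T}_{\P,\mu}(q) \leq \overline{T}_{\P,\mu}(q)$ together with the monotonicity of the Legendre transform used throughout multifractal analysis: writing $\tau^*(\alpha) = \inf_q\{\tau(q)+q\alpha\}$, replacing $\tau$ by a pointwise larger function can only increase the infimum.

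The substantive content lies in the two bounds $f_{\P,\mu}(\alpha) \leq \overline{T}^*_{\P,\mu}(\alpha)$ and $f_{\H,\mu}(\alpha) \leq \underline{T}^*_{\P,\mu}(\alpha)$. I would first unwind the Legendre transform so that it suffices to show $\dim_{\P}\Delta_\alpha \leq \overline{T}_{\P,\mu}(q) + q\alpha$ and $\dim_{\H}\Delta_\alpha \leq \underline{T}_{\P,\mu}(q) + q\alpha$ for every fixed $\alpha \geq 0$ and $q \in \mathbb{R}$, and then take the infimum in $q$. The key input is the local scaling $\mu(B(x,\delta)) \approx \delta^\alpha$ enjoyed by every $x \in \Delta_\alpha$; after exhausting $\Delta_\alpha$ by the nested subsets $\Delta_{\alpha,\epsilon,n} = \{x \in \Delta_\alpha : \delta^{\alpha+\epsilon} \leq \mu(B(x,\delta)) \leq \delta^{\alpha-\epsilon} \text{ for all } \delta < 1/n\}$, one compares $q$-moment sums over centred packings or centred covers of $\Delta_{\alpha,\epsilon,n}$ with the corresponding $s$-dimensional packing and Hausdorff pre-measures, using the sign of $q$ to decide which half of the two-sided bound to apply. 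Rearranging gives $\dim\Delta_{\alpha,\epsilon,n} \leq T(q) + q\alpha + O(\epsilon|q|)$; countable stability of Hausdorff and packing dimension together with $\epsilon \to 0$ then removes the error terms.

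This combinatorial argument is the classical multifractal formalism of Cajar and Brown--Michon--Peyri\`ere, and the estimates required are already contained in the references \cite{multifractalformalism,multigeom} cited for the previous proposition. The only step requiring real care is the sign bookkeeping for $q$ together with the need to distinguish between centred covers (which, via the previous proposition, control $\underline{T}_{\P,\mu}$ on the range $q\in[0,1]$) and packings (which control $\overline{T}_{\P,\mu}$ directly); once this bookkeeping is untangled the whole statement follows as a standard consequence of the relationships in the previous proposition and well-known multifractal technology.
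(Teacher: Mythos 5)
The paper's ``proof'' of this proposition is nothing but a pointer to \cite[Theorem~3.3.1]{multigeom}; it contributes no argument of its own. Your proposal, by contrast, actually sketches the argument that the cited reference contains, and you correctly identify it as the classical Brown--Michon--Peyri\`ere / Cajar / Olsen multifractal formalism upper bound. So in spirit you are taking the ``same route'' as the paper -- both defer to the same classical machinery -- except that you make the mechanism explicit, which is more informative than the paper itself.

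The substance of your sketch is sound: the two formal inequalities ($\dim_\H \leq \dim_\P$, and monotonicity of $\tau \mapsto \tau^*$ when $\tau^*(\alpha) = \inf_q\{q\alpha + \tau(q)\}$) are handled correctly, and the reduction of the two Legendre-transform bounds to the pointwise claims $\dim_\P\Delta_\alpha \leq \overline{T}_{\P,\mu}(q)+q\alpha$ and $\dim_\H\Delta_\alpha \leq \underline{T}_{\P,\mu}(q)+q\alpha$, proved via the exhaustion $\Delta_{\alpha,\epsilon,n}$ and a comparison of $s$-dimensional pre-measures with $q$-moment sums, is exactly the standard argument. One small but worth-flagging imprecision is in your final bookkeeping paragraph. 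You say centred covers ``control $\underline{T}_{\P,\mu}$ on the range $q \in [0,1]$'' via the previous proposition. What actually happens is this: the Hausdorff bound naturally produces $\dim_\H\Delta_\alpha \leq \underline{T}_{\text{C},\mu}(q)+q\alpha$ for \emph{every} $q \in \mathbb{R}$ (covers are what you compare against a Hausdorff pre-measure), and the previous proposition gives $\underline{T}_{\text{C},\mu} \leq \underline{T}_{\P,\mu}$ for all $q$ (equality off $(1,\infty)$, inequality on it), which is the direction you need, so the conversion works across the whole real line, not just $[0,1]$. The packing bound works directly with packings and $\overline{T}_{\P,\mu}$. Once that is phrased correctly, your sketch is a faithful and correct account of what \cite[Theorem~3.3.1]{multigeom} establishes.
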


\begin{proof}
See, for example, \cite[Theorem 3.3.1]{multigeom}.
\end{proof}

If it exists, the function $T^*_{\P,\mu}(\alpha) $ will be referred to as the \emph{Legendre spectrum} of $\mu$.  Another important application of the $L^q$-spectrum concerns the dimension theory of the measure and the support of the measure.  We write $\dim_\H$, $\dim_\P$ and $\dim_{\text{e}}$ to denote the Hausdorff, packing and entropy dimension respectively.
\begin{prop} \label{dimapps}
Let $\mu \in \mathcal{P}(\mathbb{R}^n)$.  Then
\[
\overline{\dim}_{\text{\emph{B}}} \, \text{\emph{supp}}(\mu) = \overline{\tau}(0)
\]
and
\[
\underline{\dim}_{\text{\emph{B}}} \, \text{\emph{supp}}(\mu) = \underline{\tau}(0).
\]
Furthermore, if $\overline{\tau}$ is differentiable at $q=1$, then
\[
\dim_{\text{\emph{H}}}  \mu = \dim_{\text{\emph{P}}}  \mu = \dim_{\text{\emph{e}}} \mu =- \overline{\tau}'(1)
\]
and therefore
\[
- \overline{\tau}'(1) \leq \dim_{\text{\emph{H}}}  \text{\emph{supp}}(\mu) \leq \dim_{\text{\emph{P}}}  \text{\emph{supp}}(\mu) \leq   \overline{\tau}(0).
\]
\end{prop}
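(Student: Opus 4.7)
The plan is to handle the three assertions in turn.

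For the box-dimension identities, the key step is to show that the $L^q$-integrand at $q=0$, namely $I(\delta) := \int \mu(B(x,\delta))^{-1}\,d\mu(x)$, is comparable to the $\delta$-covering number $N_\delta(\text{supp}\,\mu)$. If $\{x_i\}$ is a maximal $\delta$-separated subset of $\text{supp}\,\mu$, then $\{B(x_i,\delta)\}$ is disjoint while $\{B(x_i,2\delta)\}$ covers $\text{supp}\,\mu$. Splitting the integral over the $B(x_i,\delta)$ and using the trivial bound $\mu(B(x,\delta)) \leq \mu(B(x_i,2\delta))$ for $x \in B(x_i,\delta)$, together with the reverse inclusion on the other side, gives $I(\delta) \asymp N_\delta(\text{supp}\,\mu)$ after a short pigeonholing argument. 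Taking $\log/(-\log\delta)$ then identifies $\underline{\tau}_\mu(0)$ and $\overline{\tau}_\mu(0)$ with the lower and upper box dimensions of $\text{supp}\,\mu$, respectively.

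For the dimension identities under differentiability, write $\tau := \overline{\tau}_\mu$. Since $\underline{\tau}_\mu \leq \overline{\tau}_\mu$ with equality at $q=1$, one sees by squeezing the one-sided derivatives that differentiability of $\overline{\tau}_\mu$ at $1$ transfers to $\underline{\tau}_\mu$, so both spectra share the common slope $-\alpha := \overline{\tau}_\mu'(1)$. Following Ngai's approach, for each fixed $q > 1$ convexity and $\tau(1) = 0$ yield $\tau(q) \geq -\alpha(q-1)$, hence for any $\varepsilon > 0$ and all small $\delta$,
\[
\int \mu\big(B(x,\delta)\big)^{q-1}\,d\mu(x) \;\leq\; \delta^{\alpha(q-1) - \varepsilon}.
\]
Markov's inequality combined with a Borel--Cantelli argument along the geometric scales $\delta_k = 2^{-k}$ then produces $\limsup_{\delta \to 0} \log\mu(B(x,\delta))/\log\delta \leq \alpha$ $\mu$-almost everywhere after letting $\varepsilon \to 0$; the symmetric argument with $q < 1$ supplies the matching lower bound. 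Hence the local dimension of $\mu$ equals $\alpha$ $\mu$-a.e., and standard Billingsley-type lemmas deliver $\dim_\H \mu = \dim_\P \mu = \alpha$, while the characterisation of entropy dimension via pointwise dimensions gives $\dim_{\text{e}}\mu = \alpha$.

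The concluding inequality chain is then immediate: $\dim_\H \mu \leq \dim_\H \text{supp}(\mu)$ by definition of the Hausdorff dimension of a measure as an infimum over sets of full measure, $\dim_\H \leq \dim_\P$ holds for any set, and $\dim_\P \text{supp}(\mu) \leq \overline{\dim}_B \text{supp}(\mu) = \overline{\tau}_\mu(0)$ by the first part. The principal obstacle is the middle step: extracting almost-sure constancy of the local dimension from differentiability of a single global convex function at one point. Convexity only supplies one-sided subdifferential information, so matched $q \uparrow 1$ and $q \downarrow 1$ estimates are both required, and upgrading integral bounds to pointwise statements requires care in the choice of scale sequence and in letting $\varepsilon \to 0$ along a countable sequence to avoid exhausting measure on an uncountable union of exceptional sets.
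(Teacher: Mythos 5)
The paper does not actually prove this proposition: the box-dimension identity is declared ``obvious'' (it is immediate from Proposition~1, which for $q\in[0,1]$ identifies $\tau_\mu$ with the covering multifractal box dimension, whose value at $q=0$ is by definition the box dimension of the support), the a.e.\ dimension formula for $\mu$ is cited from Ngai, and the closing chain is a string of standard inequalities. You instead attempt a self-contained argument, which is a genuinely different route, but two of the steps as written do not go through.

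In the Ngai step the key inference runs in the wrong direction. Convexity together with $\overline{\tau}(1)=0$ gives $\overline{\tau}(q)\geq -\alpha(q-1)$, and since $\overline{\tau}(q)=\limsup_{\delta\to 0}\frac{\log I_q(\delta)}{-\log\delta}$, what a $\limsup$ upper bound of the form $\overline{\tau}(q)\leq A$ would deliver is the eventual estimate $I_q(\delta)\leq \delta^{-A-\varepsilon}$. The inequality you have, $\overline{\tau}(q)\geq -\alpha(q-1)$, therefore yields only $I_q(\delta)\leq \delta^{-\overline{\tau}(q)-\varepsilon}$, which is \emph{weaker} than the claimed $I_q(\delta)\leq\delta^{\alpha(q-1)-\varepsilon}$ whenever $\overline{\tau}(q)>-\alpha(q-1)$, which convexity does not forbid. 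To run Markov plus Borel--Cantelli along $\delta_k=2^{-k}$ you need the \emph{reverse} inequality $\overline{\tau}(q)\leq -\alpha(q-1)+o(q-1)$, and that comes exactly from differentiability at $q=1$, not from convexity; one then has to choose $q$ close enough to $1$ and $\varepsilon$ small relative to $q-1$ so that the $o(q-1)$ error is absorbed. Your closing remarks show you know both one-sided estimates are needed, but the displayed step is a non-sequitur and, taken literally, would prove the conclusion without using differentiability at all.

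More minor, but worth flagging: for the box-dimension identity, the upper bound $I(\delta)\lesssim N_{c\delta}(\operatorname{supp}\mu)$ follows as you sketch, but the lower bound is not the ``short pigeonholing argument'' you suggest. The natural estimate from a maximal $\delta$-separated set or a $\delta$-grid produces a sum of ratios such as $\sum_i \mu(B(x_i,\delta/2))/\mu(B(x_i,2\delta))$ or $\sum_Q\mu(Q)/\mu(Q^+)$, and bounding such a sum below by a constant multiple of the number of terms is not automatic for non-doubling $\mu$; this is precisely the content of Peres--Solomyak's Lemma~2.1 (for $q\geq 0$) which the paper invokes via Proposition~1. Either supply that argument or cite it, as the paper does. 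The squeezing argument transferring differentiability from $\overline{\tau}$ to $\underline{\tau}$ and the concluding dimension inequalities are correct and match the intended route.
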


\begin{proof}
The box dimension result is obvious and the dimension results for $\mu$ are due to Ngai \cite{ngai}, see also \cite{young}.  Finally, the Hausdorff and packing dimension estimates for $\text{supp}(\mu)$ follow from basic properties of dimensions, see \cite{falconer}.
\end{proof}

If one is interested in the Hausdorff dimension of a dynamically defined set $F$ modelled by a full shift, then it is natural to compute the Hausdorff dimension of Bernoulli measures supported on $F$.  This always gives a lower bound and often taking the supremum over all such measures attains the correct value.

\subsection{Self-affine measures}

Given an iterated function system (IFS) $\{S_i\}_{i \in \mathcal{I}}$, for some finite index set $\mathcal{I}$, where each of the maps $S_i$ is an affine contracting self map on $\mathbb{R}^n$, i.e., a linear contraction composed with a translation, and an associated probability vector $\{p_i\}_{i \in \mathcal{I}}$ with each $p_i \in (0,1)$, there is a unique non-empty compact set $F$ satisfying
\[
F = \bigcup_{i \in \mathcal{I}} S_i(F),
\]
which is called the \emph{self-affine} attractor of the IFS, a unique Borel probability measure $\mu$ satisfying
\[
\mu = \sum_{i \in \mathcal{I}} p_i \, \mu \circ S_{i}^{-1},
\]
which is called the \emph{self-affine measure} associated with $\{S_i\}_{i \in \mathcal{I}}$ and $\{p_i\}_{i \in \mathcal{I}}$.  It is easy to see that the support of $\mu$ is equal to $F$.  King \cite{king} computed the Hausdorff multifractal spectrum for the self-affine measures supported on Bedford-McMullen carpets assuming a strong separation condition, which was later removed by Jordan and Rams \cite{jordanrams}.  Olsen \cite{sponges} studied various multifractal properties of self-affine multifractal sponges (the higher dimensional analogue of the Bedford-McMullen carpet).  In particular, he computed the Hausdorff spectrum and the multifractal box dimensions assuming the higher dimensional analogue of King's separation condition - known as the \emph{very strong separation condition}.  Feng and Wang \cite{fengaffine} computed the $L^q$-spectrum in the range $q \geq 0$ for a considerably more general class of planar self-affine measures where the maps in the IFS take the unit square onto a rectangle with sides parallel to the axes, with the orientation preserved.  Ni and Wen \cite{gdaffine} extended these results to the graph-directed situation in some restricted cases.  Random self-affine measures were studied in \cite{randomsponges, me_random}.  In the context of more general self-affine sets, Falconer computed the $L^q$-spectrum in the range [0,1] for generic self-affine measures \cite{genericaffinemeasure} and in the range $[0,\infty)$ for almost self-affine measures \cite{almostselfaffine}.  Barral and Feng \cite{barralfeng} considered the very difficult question of whether the multifractal formalism holds generically for self-affine measures and made some significant progress concerning the part of the spectrum corresponding to $q \in (1,2)$.  To the best of our knowledge the results of Olsen \cite{sponges} are the only example where the $L^q$-spectrum has been computed for non-trivial self-affine measures in the range $q<0$.  This is a notoriously difficult problem, partly due to standard inequalities like Jensen and H\"older going in the wrong direction for $q<0$.  We discuss this problem in Section \ref{negativeq}.

\subsection{Our class of measures and some notation} \label{boxdef}

Box-like self-affine sets were defined in \cite{me_box}.  They are the attractors of IFSs consisting of contracting affine maps which take the unit square to a subrectangle with sides parallel to the axes.  The affine maps which make up such an IFS are necessarily of the form $S_i = T_i \circ L_i + t_i$, where $T_i$ is a contracting linear map of the form
\[
T_i = \left ( \begin{array}{cc}
c_i & 0\\ 
0 &  d_i\\
\end{array} \right ) 
\]
for some $c_i, d_i \in (0,1)$; $L_i$ is a linear isometry of the plane for which $L_i([-1,1]^2) =[-1,1]^2$; and $t_i \in \mathbb{R}^2$ is a translation vector.  If for all $i \in \mathcal{I}$, $L_i$ is the identity map, then we obtain the class of self-affine sets considered by Feng and Wang \cite{fengaffine}.  Let $\{S_i \}_{i \in \mathcal{I}}$ be an IFS consisting of maps of the form described above for some finite index set $\mathcal{I}$, with $\lvert \mathcal{I} \rvert \geq 2$, and let $\{p_i \}_{i \in \mathcal{I}}$ be a corresponding probability vector with $p_i \in (0,1)$ for all $i \in \mathcal{I}$.  Let $F$ be the associated self-affine set and $\mu$ be the associated self-affine measure with $\text{supp} \, \mu = F$.  The following separation condition, which we will need to obtain some of our results, was introduced in \cite{fengaffine} and also used in \cite{me_box}.

\begin{defn}
An IFS $\{S_i\}_{i \in \mathcal{I}}$ satisfies the rectangular open set condition (ROSC) if there exists a non-empty open rectangle, $R = (a,b)\times(c,d) \subset \mathbb{R}^2$, such that $\{S_i(R)\}_{i \in \mathcal{I}}$ are pairwise disjoint subsets of $R$.
\end{defn}
Let
\[
\mathcal{I}_A = \{ i \in \mathcal{I} : S_i \text{ maps horizontal lines to horizontal lines} \}
\]
and
\[
\mathcal{I}_B = \{ i \in \mathcal{I} : S_i \text{ maps horizontal lines to vertical lines} \}.
\]
If $\mathcal{I}_B=\emptyset$, then we will say $\mu$ is of \emph{separated type} and otherwise we will say that $\mu$ is of \emph{non-separated type}.  It will become clear why we make this distinction in the following section. Write $\mathcal{I}^* = \bigcup_{k\geq1} \mathcal{I}^k$ to denote the set of all finite sequences with entries in $\mathcal{I}$ and for $\textbf{\emph{i}}= \big(i_1, i_2, \dots, i_k \big) \in \mathcal{I}^*$ write $S_{\textbf{\emph{i}}} = S_{i_1} \circ S_{i_2} \circ \dots \circ S_{i_k}$ and $p(\textbf{\emph{i}}) = p_{i_1} \cdots p_{i_k}$.  The \emph{singular values} of a linear map $A$ are the positive square roots of the eigenvalues of $A^T A$.  Viewed geometrically, these numbers are the lengths of the semi-axes of the image of the unit ball under $A$.  Thus, roughly speaking, the singular values correspond to how much the affine map contracts (or expands) in different directions.  Write $\alpha_1 (\textbf{\emph{i}}) \geq \alpha_2 (\textbf{\emph{i}})$ for the singular values of the linear part of the map $S_{\textbf{\emph{i}}}$.  Note that, for all $\textbf{\emph{i}} \in \mathcal{I}^*$, the singular values, $\alpha_1 (\textbf{\emph{i}})$ and $\alpha_2 (\textbf{\emph{i}})$, are just the lengths of the sides of the rectangle $S_{\textbf{\emph{i}}}\big([0,1]^2\big)$.  Let
\[
\alpha_{\min} = \min \{\alpha_2(i) : i \in \mathcal{I} \},
\]
\[
\alpha_{\max} = \max \{\alpha_1(i) : i \in \mathcal{I} \},
\]
\[
p_{\min} = \min \{p_i : i \in \mathcal{I} \}
\]
and
\[
p_{\max} = \max \{p_i : i \in \mathcal{I} \}
\]
and note that $0<\alpha_{\min}, \alpha_{\max}, p_{\min}, p_{\max} <1$.  In \cite{me_box} the box and packing dimensions of box-like self-affine sets were computed (assuming the ROSC) by means of a modified singular value function.  In the following section we introduce a natural multifractal analogue which we call a $q$-modified singular value function.
\\ \\
Box-like self affine sets (and measures) can enjoy a much richer visual structure than other classes of self-affine carpet.  This is because the non-trivial rotations can compound to create very natural looking images, more akin to general self-affine constructions.

\begin{figure}[H] 
	\centering
	\includegraphics[width=160mm]{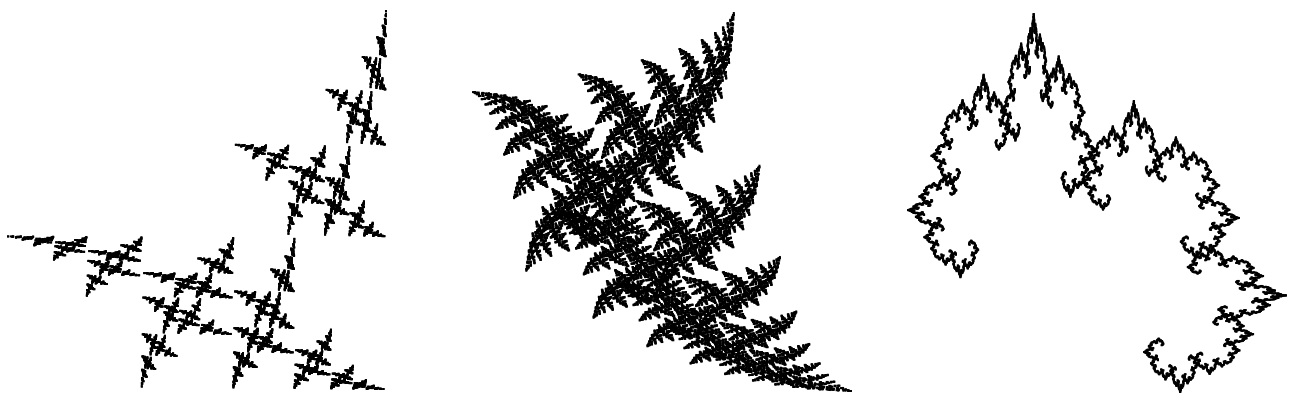}
\caption{Some examples of box-like self-affine supports.}
\end{figure}

\section{Results} \label{results}

\subsection{Existence of the $L^q$-spectrum for graph-directed self-similar measures} \label{existence}

In this section we will discuss the $L^q$-spectrum of self-similar and graph-directed self-similar measures.  This is relevant for our work because the $L^q$-spectrum of our class of self-affine measures depends on the $L^q$-spectra of the projections of the measure onto the horizontal and vertical axes.  These measures are either self-similar measures or a pair of graph-directed self-similar measures.  The defining IFSs for these measures may have complicated overlaps and so computing the exact $L^q$-spectrum is currently intractable, but the important thing for us is that they exist, so that we can compute the $L^q$-spectrum of the self-affine measure \emph{in terms} of the projected measures.  It was shown by Peres and Solomyak \cite{exists} that the $L^q$-spectrum exists for positive $q$ for any self-similar (even self-conformal) measure regardless of separation conditions.  However, we must extend these results to the graph-directed case.  The paper \cite{gdaffine} considered graph-directed self-affine measures and the $L^q$-spectrum depended on the $L^q$-spectra of the projected measures which were graph-directed self-similar measures.  However, there the authors assume that these $L^q$-spectra exist, without proving it explicitly.
\\ \\
Let $\Gamma = (\mathcal{V}, \mathcal{E})$ be a finite connected directed multigraph with vertices $\mathcal{V} = \{1, \dots, N\}$ and a finite multiset of edges $ \mathcal{E}$.  Write $\mathcal{E}_{i,j}$ for the multiset of all edges joining the vertex $i$ to the vertex $j$ and for each $e \in \mathcal{E}$ associate a contracting similarity mapping $S_e$ from $\mathbb{R}^d \to \mathbb{R}^d$ with similarity ratio $c_e \in (0,1)$, and a probability $p_e \in (0,1)$ such that
\[
\sum_{j = 1}^{N} \sum_{e \in \mathcal{E}_{i,j}} p_e = 1
\]
for all $ i \in \mathcal{V}$.  It is standard that their exists a unique family of non-empty compact sets $\{F_{i}\}_{i  \in \mathcal{V}}$ satisfying
\[
F_i = \bigcup_{j = 1}^{N} \bigcup_{e \in \mathcal{E}_{i,j}} S_e(F_j)
\]
and a unique family of Borel probability measures $\{\mu_{i}\}_{i  \in \mathcal{V}}$ satisfying
\[
\mu_i = \sum_{j = 1}^{N} \sum_{e \in \mathcal{E}_{i,j}} p_e \mu_j \circ S_e^{-1}.
\]
Furthermore, the support of $\mu_i$ is $F_i$ for all $i \in \mathcal{V}$.  The measures $\{\mu_{i}\}_{i  \in \mathcal{V}}$ are called \emph{a family of graph-directed self-similar measures}.  The main result of this section is the following.
\begin{thm} \label{gdexists}
For every family of graph-directed self-similar measures $\{\mu_{i}\}_{i  \in \mathcal{V}}$, the $L^q$-spectrum exists for all $q\geq0$ and are the same for each measure $\mu_{i}$,  i.e. for all $q \geq 0$ and $i,j \in \mathcal{V}$, we have
\[
\underline{\tau}_{\mu_i}(q) = \overline{\tau}_{\mu_i}(q) = \underline{\tau}_{\mu_j}(q) = \overline{\tau}_{\mu_j}(q).
\]
\end{thm}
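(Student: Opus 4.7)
The plan is to adapt the Peres--Solomyak argument from the single-vertex (self-similar) case to the graph-directed setting. For each vertex $i \in \mathcal{V}$ and $q \geq 0$, define the moment integral
\[
I_i(q, \delta) = \int \mu_i\big(B(x,\delta)\big)^{q-1}\, d\mu_i(x),
\]
so that $\underline{\tau}_{\mu_i}(q)$ and $\overline{\tau}_{\mu_i}(q)$ are the liminf and limsup of $\log I_i(q,\delta)/(-\log \delta)$. The strategy is to establish a vector-valued submultiplicativity for $(I_i(q,\cdot))_{i\in\mathcal V}$ and then to extract from it both the existence of the limit and its independence of $i$.

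First I would iterate the graph-directed self-similarity and apply a stopping-time construction: for each target scale $\delta$ and each starting vertex $i$, take all finite paths $\mathbf{e}$ starting at $i$ whose contraction $c_{\mathbf{e}}$ lies in a uniform multiplicative window around $\delta$. This yields the decomposition $\mu_i = \sum_{\mathbf{e}} p_{\mathbf{e}}\, \mu_{t(\mathbf{e})} \circ S_{\mathbf{e}}^{-1}$, in which every piece is a rescaled copy of some $\mu_{t(\mathbf{e})}$ supported on a set of diameter comparable to $\delta$. The key geometric input, and exactly where Peres--Solomyak dispense with separation hypotheses, is that any fixed ball $B(x,\delta)$ can meet only boundedly many of these rescaled supports --- a constant depending only on the ambient dimension and the spread of the contractions, not on $\delta$. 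Combining this bounded overlap with H\"older's inequality when $q \geq 1$, and with subadditivity $(\sum a_k)^{q-1} \leq \sum a_k^{q-1}$ when $q \in [0,1]$, I expect to derive an inequality of the shape
\[
I_i(q, r\delta) \;\leq\; C(q) \sum_{j \in \mathcal{V}} M_{ij}(q,r)\, I_j(q, \delta),
\]
where $M_{ij}(q,r)$ sums the appropriate $p_{\mathbf{e}}$-weights over paths from $i$ to $j$ whose contraction is of order $r$. Iterating this inequality and taking logs gives a Fekete-type subadditivity for a vector of quantities indexed by $\mathcal{V}$, whose simultaneous rate of growth is controlled, via Perron--Frobenius, by a single exponent. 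This exponent is then common to all vertices and equals the limit of $\log I_i(q,\delta)/(-\log \delta)$, giving both existence and uniformity.

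The independence of $i$ at the level of $\underline{\tau}$ and $\overline{\tau}$ can also be seen directly from connectedness of $\Gamma$: for any pair $i,j$ there is a finite path $\mathbf{e}$ from $i$ to $j$, so on $S_{\mathbf{e}}(F_j)$ we have $\mu_i \geq p_{\mathbf{e}}\,\mu_j \circ S_{\mathbf{e}}^{-1}$; comparing the contribution of this embedded scaled copy to $I_i(q,\delta)$ with $I_j(q, \delta/c_{\mathbf{e}})$ (where only a constant correction arises from $p_{\mathbf{e}}$ and $c_{\mathbf{e}}$) yields $\underline{\tau}_{\mu_i}(q) \leq \underline{\tau}_{\mu_j}(q)$, and swapping $i \leftrightarrow j$ gives equality, and likewise for $\overline{\tau}$. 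The hard part will be the bounded overlap estimate used to close the main inequality: without any separation condition the supports of distinct cylinders can overlap arbitrarily, so one must argue purely on the geometric side --- using that the rescaled supports have diameter of order $\delta$ and are contained in a fixed bounded region --- to bound how many of them a single $\delta$-ball can meet, and then to track how this constant enters the constant $C(q)$ in the submultiplicative inequality uniformly in $\delta$. The additional bookkeeping for the graph-directed case (keeping track of terminal vertex types and ensuring the matrix $M_{ij}$ has a well-behaved Perron eigenvalue) is routine once connectedness of $\Gamma$ is in hand.
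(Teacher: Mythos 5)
Your high-level plan (dyadic/stopping decomposition, apply Jensen in the two regimes of $q$, handle the graph structure via a transfer argument between vertices) matches the paper in outline, but two of your specific claims would break the argument.

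First, the claimed geometric input is wrong. You write that ``any fixed ball $B(x,\delta)$ can meet only boundedly many of these rescaled supports'' and that this is ``exactly where Peres--Solomyak dispense with separation hypotheses.'' This is false in general: with heavy overlaps, arbitrarily many stopping-time cylinders (all of diameter comparable to $\delta$) can pile up near a single point, so no such bound holds. In fact, assuming it amounts to imposing a separation condition. What the Peres--Solomyak argument actually uses is (a) Jensen's inequality with the probability weights $p_{\textbf{\emph{e}}}/p_{\pm}(\tilde Q)$, which handles the sum of an arbitrary number of overlapping pieces, and (b) the much tamer geometric fact that a single rescaled piece $S_{\textbf{\emph{e}}}(F_j)$ (of diameter $\asymp 2^{-n}$) meets only $O(1)$ dyadic cubes of side $2^{-n}$, so that pulling the grid back by $S_{\textbf{\emph{e}}}^{-1}$ produces a uniformly good ($(M,2^{-m},N)$-good) cover of $F_j$. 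It is this bounded overlap of the pulled-back grid, not of the cylinders, that drives the comparison $\sum_{Q}\mu_j(S_{\textbf{\emph{e}}}^{-1}Q)^q\asymp\hat D_m^q(\mu_j)$.

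Second, you write a single inequality $I_i(q,r\delta)\leq C(q)\sum_j M_{ij}(q,r)I_j(q,\delta)$ for both $q\geq1$ and $q\in[0,1]$, but a one-sided bound by itself does not give existence of the limit. The paper proves genuine submultiplicativity $\hat D_{m+n}^q(\mu_1)\lesssim\hat D_m^q(\mu_1)\hat D_n^q(\mu_1)$ for $q\geq1$ (convex Jensen) and, crucially, \emph{super}multiplicativity $\hat D_{m+n}^q(\mu_1)\gtrsim\hat D_m^q(\mu_1)\hat D_n^q(\mu_1)$ for $q\in[0,1]$ (concave Jensen). In the latter case the argument does not apply Jensen to the full sum over pieces meeting $\tilde Q$; rather, for each path $\textbf{\emph{e}}$ one \emph{selects} a single maximizing cube $\tilde Q(\textbf{\emph{e}})$ and discards the others, to get a restricted sum to which the concave Jensen step and the subsequent accounting ($\sum_{\tilde Q}p_-(\tilde Q)^q\gtrsim\hat D_n^q(\mu_1)$) apply. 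Your proposal does not address this, and without it the $q\in[0,1]$ regime does not close.

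Finally, where you propose a vector/Perron--Frobenius iteration, the paper instead proves the cleaner statement that $\hat D_m^q(\mu_i)\asymp\hat D_m^q(\mu_j)$ with constants independent of $m$ (Lemma \ref{gdkeylem}), using the very embedding $\mu_i\geq p_{\textbf{\emph{e}}}\mu_j\circ S_{\textbf{\emph{e}}}^{-1}$ you mention at the end. That uniform comparability reduces the whole problem to a scalar Fekete argument for a single $\mu_1$, sidestepping any need for a matrix-valued subadditivity lemma or spectral radius estimate. If you upgrade your comparability remark from an inequality between exponents to a scale-uniform comparability of the quantities themselves, you recover the paper's route and avoid the extra bookkeeping you acknowledge would be needed.
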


We will prove Theorem \ref{gdexists} in Section \ref{gdproof}.

\subsection{The $L^q$-spectrum of projected measures} \label{projections}

Our results rely on knowledge of the $L^q$-spectra of the projections of $\mu$ onto the orthogonal axes.  Let $\pi_1, \pi_2: \mathbb{R}^2 \to \mathbb{R}$ be defined by $\pi_1(x,y) = x$ and $\pi_2(x,y) = y$ respectively.  It follows from \cite[Lemma 2.8]{me_box} that if $\mu$ is of separated type, then $\pi_1(\mu)$ and $\pi_2(\mu)$ are self-similar measures, and otherwise, they are a pair of graph-directed self-similar measures, hence the relevance of the previous section.  For $q \geq 0$, let
\[
\tau_1(q) = \tau_{\pi_1(\mu)}(q)
\]
and
\[
\tau_2(q) =\tau_{\pi_2(\mu)}(q).
\]
It follows from Theorem \ref{gdexists} that $\tau_1(q)$ and $\tau_2(q)$ exist for all $q \geq 0$ and, moreover, if $\mu$ is non-separated, then $\tau_1 \equiv \tau_2$. The problem with calculating the dimension of $\tau_1(q) $ and $\tau_2(q) $ is that the IFSs of similarities alluded to above may not satisfy the open set condition (OSC), or graph-directed open set condition (GDOSC).  If $\mu$ is of separated type, then the natural candidate for the $L^q$-spectra of $\pi_1(\mu)$ and $\pi_2(\mu)$ are given by a simple standard formula, see \cite[Chapter 17, (17.26)]{falconer}.  If $\mu$ is of non-separated type then the situation is slightly more complicated.  First one defines an associated weighted adjacency matrix $A^{(q,t)}$ and then the natural candidate for the $L^q$-spectrum of the measures $\pi_1(\mu)$ and $\pi_2(\mu)$ is the function $\beta:\mathbb{R} \to \mathbb{R}$ defined by
\[
\rho\big(A^{(q,t)} \big) = 1,
\]
where $\rho\big(A^{(q,t)} \big)$ denotes the spectral radius of $A^{(q,t)}$, see \cite{stric}.  The basic concept is that the `natural candidates' actually give the $L^q$-spectrum provided the underlying IFSs have enough separation.  This problem has been considered by many authors, in particular, Strichartz \cite{stric}, Falconer \cite{falconer}, Olsen \cite{multifractalformalism, olsenbook} and Riedi \cite{riedi}.  Rather than state results explicitly, we adopt the philosophy that in certain `nice' cases with enough `separation' the natural candidates give us the correct function, but we are much more focused on the fact that the $L^q$-spectrum exists and therefore can be used to state our main results.
\\ \\
We will occasionally require that $\tau_1$ and $\tau_2$ are differentiable.  We conclude this section by observing that the `natural candidates' for $\tau_1$ and $\tau_2$ discussed above are differentiable for all $q >0$ and, moreover, Feng \cite{fengsmooth} recently proved that the $L^q$-spectrum of a self-similar measure on the line is differentiable for $q>0$ in certain overlapping cases.  However, Barral and Feng have recently shown that for any $q_0 \in (1,2)$ it is possible to construct a self-similar measure with overlaps for which the $L^q$-spectrum is not differentiable at $q_0$, see \cite[Remark 6.8]{barralfeng}.  In Section \ref{example2}, we provide an example of a \emph{self-affine} measure for which the $L^q$-spectrum is not differentiable at a point in $(0,1)$.

\subsection{A moment scaling function $\gamma(q)$} \label{gammadefsection}

For $\textbf{\emph{i}} \in \mathcal{I}^*$, let $b(\textbf{\emph{i}}) = \lvert \pi_1(S_{\textbf{\emph{i}}}[0,1]^2)\rvert$ and $h(\textbf{\emph{i}}) = \lvert \pi_2(S_{\textbf{\emph{i}}}[0,1]^2)\rvert$ denote the length of the base and height of the rectangle $S_{\textbf{\emph{i}}}[0,1]^2$ respectively and define $\pi_{\textbf{\emph{i}}}:\mathbb{R}^2 \to \mathbb{R}$ by
\[
\pi_{\textbf{\emph{i}}} = \left\{ \begin{array}{cc}
\pi_1 &  \text{if $\textbf{\emph{i}} \in \mathcal{I}_A$ and $b(\textbf{\emph{i}}) \geq h(\textbf{\emph{i}})$}\\ 
\pi_2 &   \text{if $\textbf{\emph{i}} \in \mathcal{I}_A$ and $b(\textbf{\emph{i}}) < h(\textbf{\emph{i}})$}\\
\pi_1 &   \text{if $\textbf{\emph{i}} \in \mathcal{I}_B$ and $b(\textbf{\emph{i}}) < h(\textbf{\emph{i}})$}\\ 
\pi_2 &   \text{if $\textbf{\emph{i}} \in \mathcal{I}_B$ and $b(\textbf{\emph{i}}) \geq h(\textbf{\emph{i}})$}
\end{array} \right. 
\]
Finally, let $\tau_\textbf{\emph{i}}(q) = \tau_{\pi_{\textbf{\emph{i}}}\mu}(q)$.  In fact, $\tau_\textbf{\emph{i}}(q)$ is simply the $L^q$-spectrum of the projection of $\mu \vert_{S_\textbf{\emph{i}}(F)}$ onto the longest side of the rectangle $S_\textbf{\emph{i}}\big([0,1]^2\big)$ and is always equal to either $\tau_1(q)$ or $\tau_2(q)$.  For $s \in \mathbb{R}$ and $q \geq 0$, define the \emph{$q$-modified singular value function} $\psi^{s,q}:\mathcal{I}^* \to (0,\infty)$ by
\begin{equation} \label{modsing}
\psi^{s,q}\big({\textbf{\emph{i}}}\big) = p(\textbf{\emph{i}})^q \, \, \alpha_1 (\textbf{\emph{i}})^{ \tau_\textbf{\emph{i}}(q)} \, \,  \alpha_2 (\textbf{\emph{i}})^{s-\tau_\textbf{\emph{i}}(q)},
\end{equation}

and for $k \in \mathbb{N}$, define a number $\Psi_k^{s,q}$ by
\[
\Psi_k^{s,q}= \sum_{\textbf{\emph{i}} \in \mathcal{I}^{k}} \psi^{s,q}({\textbf{\emph{i}}}).
\]
Note that $\psi^{s,q}$ and $\Psi_k^{s,q}$ are multifractal analogues of $\psi^{s}$ and $\Psi_k^{s}$, defined in \cite{me_box}.  In fact, it is easy to see that $\psi^{s}(S_\textbf{\emph{i}}) = \psi^{s,0}(\textbf{\emph{i}})$ and $\Psi_k^{s} = \Psi_k^{s,0}$.

\begin{lma}[multiplicative properties] \hspace{1mm} \label{additive}
\\ \\
Let $q \geq 0$.
\\ \\
a) For $s \in \mathbb{R}$ and $\textbf{{i}}, \textbf{{j}} \in \mathcal{I}^*$ we have

\begin{itemize}
\item[a1)]  If $s< \tau_1(q)+\tau_2(q)$, then $\psi^{s,q}({\textbf{{i}}}{\textbf{{j}}}) \leq  \psi^{s,q}({\textbf{{i}}})  \, \psi^{s,q}({\textbf{{j}}})$;

\item[a2)]  If $s= \tau_1(q)+\tau_2(q)$, then $\psi^{s,q}({\textbf{{i}}} {\textbf{{j}}}) =  \psi^{s,q}({\textbf{{i}}})  \, \psi^{s,q}({\textbf{{j}}})$;

\item[a3)]  If $s> \tau_1(q)+\tau_2(q)$, then $\psi^{s,q}({\textbf{{i}}} {\textbf{{j}}}) \geq  \psi^{s,q}({\textbf{{i}}})  \, \psi^{s,q}({\textbf{{j}}})$.

\end{itemize}

b) For $s \in \mathbb{R}$ and $k,l \in \mathbb{N}$ we have

\begin{itemize}
\item[b1)]  If $s< \tau_1(q)+\tau_2(q)$, then $\Psi_{k+l}^{s,q} \leq \Psi_{k}^{s,q} \, \Psi_{l}^{s,q}$;

\item[b2)]  If $s=\tau_1(q)+\tau_2(q)$, then $\Psi_{k+l}^{s,q} = \Psi_{k}^{s,q} \, \Psi_{l}^{s,q}$;

\item[b3)]  If $s> \tau_1(q)+\tau_2(q)$, then $\Psi_{k+l}^{s,q} \geq \Psi_{k}^{s,q} \, \Psi_{l}^{s,q}$.

\end{itemize}

\end{lma}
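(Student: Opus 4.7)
The plan is to establish part (a) by reducing to the critical value $s = \tau_1(q) + \tau_2(q)$, after which part (b) is an immediate summation. I would first collect three multiplicative facts for the linear parts of $S_{\textbf{\emph{ij}}} = S_\textbf{\emph{i}} \circ S_\textbf{\emph{j}}$: (i) $p(\textbf{\emph{ij}}) = p(\textbf{\emph{i}})\,p(\textbf{\emph{j}})$ by construction of the Bernoulli measure; (ii) the product $\alpha_1\alpha_2$ is the absolute value of the determinant of the linear part and is therefore multiplicative, giving $\alpha_1(\textbf{\emph{ij}})\alpha_2(\textbf{\emph{ij}}) = \alpha_1(\textbf{\emph{i}})\alpha_2(\textbf{\emph{i}})\alpha_1(\textbf{\emph{j}})\alpha_2(\textbf{\emph{j}})$; (iii) $\alpha_1$ is the operator $2$-norm of the linear part and is submultiplicative, so $\alpha_1(\textbf{\emph{ij}}) \leq \alpha_1(\textbf{\emph{i}})\alpha_1(\textbf{\emph{j}})$, and dividing (ii) by this yields the crucial supermultiplicative bound $\alpha_2(\textbf{\emph{ij}}) \geq \alpha_2(\textbf{\emph{i}})\alpha_2(\textbf{\emph{j}})$.

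To prove (a2), I would split on the type of $\mu$. In the separated case $\mathcal{I}_B = \emptyset$, every word is axis-preserving, so $b(\textbf{\emph{ij}}) = b(\textbf{\emph{i}})b(\textbf{\emph{j}})$ and $h(\textbf{\emph{ij}}) = h(\textbf{\emph{i}})h(\textbf{\emph{j}})$. A direct check of the two subcases $b(\textbf{\emph{i}}) \geq h(\textbf{\emph{i}})$ and $b(\textbf{\emph{i}}) < h(\textbf{\emph{i}})$, using the definitions of $\pi_\textbf{\emph{i}}$ and $\tau_\textbf{\emph{i}}(q)$, shows that in either subcase
\[
\psi^{\tau_1(q)+\tau_2(q),q}(\textbf{\emph{i}}) = p(\textbf{\emph{i}})^q\,b(\textbf{\emph{i}})^{\tau_1(q)}\,h(\textbf{\emph{i}})^{\tau_2(q)},
\]
which is manifestly multiplicative under concatenation. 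In the non-separated case, Theorem \ref{gdexists} together with the remark immediately after it gives $\tau_1 \equiv \tau_2 =: \tau$, so $\psi^{2\tau(q),q}(\textbf{\emph{i}}) = p(\textbf{\emph{i}})^q (\alpha_1(\textbf{\emph{i}})\alpha_2(\textbf{\emph{i}}))^{\tau(q)} = p(\textbf{\emph{i}})^q (b(\textbf{\emph{i}})h(\textbf{\emph{i}}))^{\tau(q)}$, which is multiplicative by (i) and (ii).

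For (a1) and (a3), I would factor out the $\alpha_2$ dependence,
\[
\psi^{s,q}(\textbf{\emph{i}}) = \psi^{\tau_1(q)+\tau_2(q),q}(\textbf{\emph{i}}) \cdot \alpha_2(\textbf{\emph{i}})^{s - \tau_1(q) - \tau_2(q)}.
\]
Using the multiplicativity of the first factor just proved in (a2), the ratio
\[
\frac{\psi^{s,q}(\textbf{\emph{ij}})}{\psi^{s,q}(\textbf{\emph{i}})\,\psi^{s,q}(\textbf{\emph{j}})} = \left(\frac{\alpha_2(\textbf{\emph{ij}})}{\alpha_2(\textbf{\emph{i}})\alpha_2(\textbf{\emph{j}})}\right)^{s - \tau_1(q) - \tau_2(q)}
\]
has base at least $1$ by (iii); the sign of the exponent $s - \tau_1(q) - \tau_2(q)$ then determines the direction of the inequality, giving (a3) when $s > \tau_1(q)+\tau_2(q)$ and (a1) when $s < \tau_1(q)+\tau_2(q)$.

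Part (b) is immediate from (a): each word of length $k+l$ decomposes uniquely as a concatenation $\textbf{\emph{ij}}$ with $\textbf{\emph{i}} \in \mathcal{I}^k$ and $\textbf{\emph{j}} \in \mathcal{I}^l$, so the three (in)equalities of (a) can be summed term by term over $\mathcal{I}^k \times \mathcal{I}^l$. The main obstacle is really only the bookkeeping in (a2) for the separated case, namely verifying that the four-way definition of $\pi_\textbf{\emph{i}}$ -- which depends both on the orientation class of $\textbf{\emph{i}}$ and on whether $b(\textbf{\emph{i}}) \geq h(\textbf{\emph{i}})$ or $b(\textbf{\emph{i}}) < h(\textbf{\emph{i}})$ -- collapses to the same symmetric closed form $p(\textbf{\emph{i}})^q b(\textbf{\emph{i}})^{\tau_1(q)} h(\textbf{\emph{i}})^{\tau_2(q)}$ in both subcases; the singular value facts underlying (iii) are standard.
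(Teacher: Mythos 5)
Your proof is correct, and it reorganizes the argument more cleanly than the paper does. The paper establishes part (a) by a direct case analysis: for the non-separated type it writes $\psi^{s,q}(\textbf{\emph{ij}})$ in terms of the multiplicative quantity $\alpha_1\alpha_2$ and the submultiplicative $\alpha_1^{2t-s}$ (using $\tau_1 \equiv \tau_2 =: t$); for the separated type it fixes $b(\textbf{\emph{i}}) \geq h(\textbf{\emph{i}})$ and then checks the three admissible orientation subcases for $\textbf{\emph{j}}$ and $\textbf{\emph{ij}}$ one by one, each time computing the ratio $\psi^{s,q}(\textbf{\emph{ij}})/(\psi^{s,q}(\textbf{\emph{i}})\psi^{s,q}(\textbf{\emph{j}}))$ explicitly. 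What you do instead is isolate (a2) as the structural statement: once you observe that at $s = \tau_1(q)+\tau_2(q)$ the $q$-modified singular value function collapses to the symmetric closed form $p(\textbf{\emph{i}})^q b(\textbf{\emph{i}})^{\tau_1(q)} h(\textbf{\emph{i}})^{\tau_2(q)}$ (in the separated case) or $p(\textbf{\emph{i}})^q(\alpha_1\alpha_2)^{\tau(q)}$ (in the non-separated case), all of (a1) and (a3) follow uniformly from the single supermultiplicativity fact $\alpha_2(\textbf{\emph{ij}}) \geq \alpha_2(\textbf{\emph{i}})\alpha_2(\textbf{\emph{j}})$ applied to the factor $\alpha_2(\cdot)^{s-\tau_1(q)-\tau_2(q)}$. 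This buys a shorter and more conceptual proof of (a1)/(a3) at the modest cost of the two-subcase verification of (a2); the ingredients (determinant multiplicativity, operator-norm submultiplicativity) are exactly those the paper uses, just deployed in one place rather than replicated across subcases. Part (b) is treated identically in both. One small presentational point: for a multi-letter word $\textbf{\emph{i}}$ the notation ``$\textbf{\emph{i}} \in \mathcal{I}_A$'' in the definition of $\pi_\textbf{\emph{i}}$ should be read as ``$S_\textbf{\emph{i}}$ preserves the axes,'' which in the separated case holds for every word; you use this implicitly and correctly, but it is worth stating when collapsing the four-way definition to two subcases.
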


We will prove Lemma \ref{additive} in Section \ref{add}.  It follows from Lemma \ref{additive} and standard properties of sub- and super-multiplicative sequences that that we may define a function $P:\mathbb{R} \times [0, \infty) \to [0, \infty)$ by
\[
P(s,q) = \lim_{k \to \infty} (\Psi_k^{s,q}) ^{1/k} 
\]
where, in fact,
\[
\lim_{k \to \infty} (\Psi_k^{s,q}) ^{1/k} = \left\{ \begin{array}{cc}
\inf_{k \in \mathbb{N}} \,  (\Psi_k^{s,q}) ^{1/k}&  \text{if $s \in (-\infty,\tau_1(q)+\tau_2(q))$}\\ \\
\Psi_1^{s,q} &  \text{if $s  = \tau_1(q)+\tau_2(q)$}\\ \\
\sup_{k \in \mathbb{N}} \,  (\Psi_k^{s,q}) ^{1/k} &  \text{if $s \in (\tau_1(q)+\tau_2(q), \infty)$}
\end{array} \right. 
\]

Again, our function $P$ is a multifractal analogue of the function $P:\mathbb{R} \to [0,\infty)$ defined in \cite{me_box} and, in fact, $P(s,0) = P(s)$ for all $s \in \mathbb{R}$. Recall that $\tau_1$ and $\tau_2$ are Lipschitz continuous on $[\lambda, \infty)$ for any $\lambda>0$.  Let $L_\lambda>0$ be the larger of the two Lipschitz constants corresponding to $\tau_1$ and $\tau_2$ on $[\lambda, \infty)$.

\begin{lma}[Properties of $P$] \label{P} \hspace{1mm}
\begin{itemize}
\item[(1)]  For $s,r \in \mathbb{R}$ and $\lambda>0$, let
\[
U(s,r,\lambda) = \min\Big\{\alpha_{\min}^s \, p_{\min}^r, \alpha_{\min}^s \, p_{\max}^r, \alpha_{\max}^s \, p_{\min}^r, \alpha_{\max}^s \, p_{\max}^r  \Big\} \, \big(\alpha_{\max}/\alpha_{\min}\big)^{\min\{-L_\lambda r, 0\}}
\]
and
\[
V(s,r, \lambda) = \max\Big\{\alpha_{\min}^s \, p_{\min}^r, \alpha_{\min}^s \, p_{\max}^r, \alpha_{\max}^s \, p_{\min}^r, \alpha_{\max}^s \, p_{\max}^r\Big\} \, \big(\alpha_{\max}/\alpha_{\min}\big)^{\max\{-L_\lambda r, 0\}}.
\]
Then, for all $s,t \in \mathbb{R}$, $\lambda>0$, $q \geq \lambda$ and $r \geq \lambda-q$ we have
\[
U(s,r, \lambda)  P(t,q) \, \leq\,  P(s+t,q+r)\,  \leq \, V(s,r, \lambda)  P(t,q).
\]
Also, for all $s,t \in \mathbb{R}$, we have
\[
 \min\{\alpha_{\min}^s, \alpha_{\max}^s  \}  P(t,0) \, \leq\,  P(s+t,0)\,  \leq \, \max\{\alpha_{\min}^s, \alpha_{\max}^s  \}   P(t,0).
\]
Finally, for all $s \in \mathbb{R}$ and $q \geq 0$, we have
\[
P(s,q) \, \leq \, p_{\max}^q P(s,0).
\]
\item[(2)] $P$ is continuous on $\mathbb{R} \times (0, \infty)$ and on $\mathbb{R} \times \{0\}$;
\item[(3)] $P$ is strictly decreasing in $s$ and in $q$;
\item[(4)] For each $q \geq 0$, there is a unique value $s\geq 0$ for which $P(s,q)=1$.
\end{itemize}
\end{lma}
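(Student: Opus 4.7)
The plan is to prove part (1) first by a direct termwise comparison, then deduce (2), (3) and (4) as quick consequences. The key identity for (1) is
\[
\frac{\psi^{s+t,q+r}(\mathbf{i})}{\psi^{t,q}(\mathbf{i})} \;=\; p(\mathbf{i})^{r}\, \alpha_2(\mathbf{i})^{s}\, \Bigl(\frac{\alpha_1(\mathbf{i})}{\alpha_2(\mathbf{i})}\Bigr)^{\tau_\mathbf{i}(q+r)-\tau_\mathbf{i}(q)}.
\]
First I would bound each of the three factors, both above and below, by a quantity of the form $(\text{const})^{k}$ for $\mathbf{i}\in\mathcal{I}^{k}$, then sum over $\mathcal{I}^{k}$, take $k$-th roots, and pass to the limit in $k$. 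The uniform estimates $p_{\min}^{k}\le p(\mathbf{i})\le p_{\max}^{k}$ and $\alpha_{\min}^{k}\le\alpha_2(\mathbf{i})\le\alpha_{\max}^{k}$ handle the first two factors cleanly. For the third factor, since $\tau_\mathbf{i}$ is either $\tau_1$ or $\tau_2$ and both are Lipschitz on $[\lambda,\infty)$ with constant at most $L_\lambda$, the assumptions $q\ge\lambda$ and $q+r\ge\lambda$ give $|\tau_\mathbf{i}(q+r)-\tau_\mathbf{i}(q)|\le L_\lambda|r|$, while the monotonicity of $\tau$ forces this difference to have sign opposite to that of $r$. Combined with $1\le\alpha_1(\mathbf{i})/\alpha_2(\mathbf{i})\le(\alpha_{\max}/\alpha_{\min})^{k}$, this squeezes the third factor between $(\alpha_{\max}/\alpha_{\min})^{k\min\{-L_\lambda r,0\}}$ and $(\alpha_{\max}/\alpha_{\min})^{k\max\{-L_\lambda r,0\}}$, producing after summing and taking $k$-th roots exactly the constants $U(s,r,\lambda)$ and $V(s,r,\lambda)$. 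The middle statement (only $s$ varies, $q=0$) and the bound $P(s,q)\le p_{\max}^{q}P(s,0)$ come from the same comparison but rely on monotonicity alone (rather than a Lipschitz estimate) to sign the exponent of the ratio factor.

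Part (2) is then immediate: at $(t,q)$ with $q>0$ I take $\lambda=q/2$, and as $(s,r)\to(0,0)$ every exponential appearing in $U$ and $V$ tends to $1$, squeezing $P(s+t,q+r)\to P(t,q)$; continuity on $\mathbb{R}\times\{0\}$ uses only the unconditional $q=0$ estimate. For (3), applying the $q=0$-type bound with $t=0$ gives $P(s,q)\ge\min\{\alpha_{\min}^{s},\alpha_{\max}^{s}\}P(0,q)$, and a short check ($\tau_\mathbf{i}(1)=0$ yields $\Psi_k^{0,1}=\sum_\mathbf{i}p(\mathbf{i})=1$, hence $P(0,1)=1$, and the comparison estimates then propagate to $P(0,q)>0$) shows $P>0$ everywhere. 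Strict monotonicity in $s$ then follows from the termwise bound $\psi^{s',q}(\mathbf{i})\le\alpha_{\max}^{k(s'-s)}\psi^{s,q}(\mathbf{i})$ for $s'>s$ (since $\alpha_2(\mathbf{i})\le\alpha_{\max}^{k}<1$), and strict monotonicity in $q$ from $\psi^{s,q'}(\mathbf{i})\le p_{\max}^{k(q'-q)}\psi^{s,q}(\mathbf{i})$ for $q'>q$, where monotonicity of $\tau$ lets us discard the $(\alpha_1/\alpha_2)$-factor for free. Finally (4) combines these: $P(\cdot,q)$ is continuous and strictly decreasing, $P(s,q)\ge\alpha_{\max}^{s}P(0,q)\to\infty$ as $s\to-\infty$, and $P(s,q)\le\alpha_{\max}^{s}P(0,q)\to 0$ as $s\to+\infty$ (choosing the sign-appropriate bound in each case), so the intermediate value theorem produces the desired unique $s$ with $P(s,q)=1$.

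The main obstacle is controlling the factor $(\alpha_1(\mathbf{i})/\alpha_2(\mathbf{i}))^{\tau_\mathbf{i}(q+r)-\tau_\mathbf{i}(q)}$ in part (1): its base can be as large as $(\alpha_{\max}/\alpha_{\min})^{k}$, so one must use the Lipschitz constant $L_\lambda$ to bound the exponent uniformly by $L_\lambda|r|$; only then is the $k$-th root of $(\alpha_{\max}/\alpha_{\min})^{kL_\lambda|r|}$ bounded independently of $k$ (namely by $(\alpha_{\max}/\alpha_{\min})^{L_\lambda|r|}$). This is precisely why the lemma restricts to $q\ge\lambda>0$ and why $L_\lambda$ appears in $U$ and $V$, and it also explains why the $q=0$ comparison takes the simpler form that requires no Lipschitz input.
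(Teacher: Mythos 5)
Your proposal is correct and follows essentially the same route as the paper's proof: a termwise comparison of $\psi^{s+t,q+r}(\mathbf{i})/\psi^{t,q}(\mathbf{i})$ into three factors, uniform bounds on each using $p_{\min},p_{\max},\alpha_{\min},\alpha_{\max}$ and the Lipschitz constant $L_\lambda$ for the $\tau$-difference, followed by $k$th roots and the limit. Parts (2)--(4) are then read off from (1) exactly as in the paper; your extra step of checking $\Psi_k^{0,1}=1$ to anchor $P>0$ is a harmless elaboration of something the paper leaves implicit.
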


We will prove Lemma \ref{P} in Section \ref{Pproofs}.  It follows from Lemma \ref{P} (4) that we may define a function $\gamma: [0,\infty) \to \mathbb{R}$ by $P(\gamma(q),q)=1$.  This \emph{moment scaling function} is our main object of study.  Unfortunately, the definition for $\gamma(q)$ is not explicit, or even a closed form expression.  However, $\gamma(q)$ can be numerically estimated by approximating it by functions $\gamma_k$. For $k \in \mathbb{N}$ let $\gamma_k:[0,\infty) \to \mathbb{R}$ be defined by
\[
\Psi_k^{\gamma_k(q),q} = 1.
\]
The fact that this gives a well defined function $\gamma_k$ is easy to see.  
\begin{lma}[Properties of $\gamma_k$] \label{gammak} \hspace{1mm}
Let $k \in \mathbb{N}$.  We have
\begin{itemize}
\item[(1)] $\gamma_k$ is strictly decreasing on $[0,\infty)$;
\item[(2)] $\gamma_k$ is continuous on $(0,\infty)$;
\item[(3)] $\gamma_k(1) = 0$ and $\lim_{q \to \infty} \gamma(q) = -\infty$;
\item[(4)] $\gamma_k$ is convex on $(0,\infty)$.
\end{itemize}
\end{lma}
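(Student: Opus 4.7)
My plan is to reduce each claim to a corresponding property of $\Psi_k^{s,q}$ as a function of its two arguments. On $\mathbb{R}\times(0,\infty)$ each summand $\psi^{s,q}(\textbf{\emph{i}})$ is jointly continuous in $(s,q)$ (using continuity of $\tau_{\textbf{\emph{i}}}$ on $(0,\infty)$, a standard consequence of convexity of the $L^q$-spectrum) and strictly decreasing in $s$ (since $\alpha_2(\textbf{\emph{i}})\in(0,1)$), while $\Psi_k^{s,q}\to 0$ as $s\to\infty$ and $\Psi_k^{s,q}\to\infty$ as $s\to-\infty$. This guarantees a unique solution $\gamma_k(q)$ to $\Psi_k^{\gamma_k(q),q}=1$ and yields continuity of $\gamma_k$ on $(0,\infty)$, proving (2). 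The identity $\gamma_k(1)=0$ in (3) is immediate: since $\tau_{\textbf{\emph{i}}}(1)=0$ for any probability measure, $\Psi_k^{0,1}=\sum_{\textbf{\emph{i}}\in\mathcal{I}^k}p(\textbf{\emph{i}})=1$.

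The crux of the lemma is the convexity (4). The key observation is that for each $\textbf{\emph{i}}\in\mathcal{I}^*$,
\[
\log\psi^{s,q}(\textbf{\emph{i}}) \;=\; q\log p(\textbf{\emph{i}}) \;+\; s\log\alpha_2(\textbf{\emph{i}}) \;+\; \tau_{\textbf{\emph{i}}}(q)\bigl(\log\alpha_1(\textbf{\emph{i}})-\log\alpha_2(\textbf{\emph{i}})\bigr)
\]
is jointly convex in $(s,q)$: it is affine in $s$, and as a function of $q$ it is the sum of an affine term and the convex function $\tau_{\textbf{\emph{i}}}(q)$ multiplied by the non-negative constant $\log\alpha_1(\textbf{\emph{i}})-\log\alpha_2(\textbf{\emph{i}})\geq 0$. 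Hence each $\psi^{s,q}(\textbf{\emph{i}})$ is log-convex in $(s,q)$. For $\lambda\in(0,1)$ and $q_1,q_2\in(0,\infty)$, set $q=\lambda q_1+(1-\lambda)q_2$ and $s=\lambda\gamma_k(q_1)+(1-\lambda)\gamma_k(q_2)$; log-convexity of each summand combined with H\"older's inequality with conjugate exponents $1/\lambda$ and $1/(1-\lambda)$ gives
\[
\Psi_k^{s,q} \;\leq\; \sum_{\textbf{\emph{i}}\in\mathcal{I}^k}\psi^{\gamma_k(q_1),q_1}(\textbf{\emph{i}})^{\lambda}\psi^{\gamma_k(q_2),q_2}(\textbf{\emph{i}})^{1-\lambda} \;\leq\; \bigl(\Psi_k^{\gamma_k(q_1),q_1}\bigr)^{\lambda}\bigl(\Psi_k^{\gamma_k(q_2),q_2}\bigr)^{1-\lambda} \;=\; 1,
\]
so strict decrease in $s$ yields $\gamma_k(q)\leq s$, which is the convexity inequality.

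For the limit $\gamma_k(q)\to-\infty$ in (3), I would combine the universal bound $\tau_{\textbf{\emph{i}}}(q)\leq q-1$ (valid for $L^q$-spectra of probability measures on $\mathbb{R}$ since the $L^q$-dimension is at most one) with the uniform estimates $p(\textbf{\emph{i}})\leq p_{\max}^k<1$ and $\alpha_j(\textbf{\emph{i}})\in[\alpha_{\min}^k,\alpha_{\max}^k]$ to argue that for any fixed $s$ one can drive $\Psi_k^{s,q}$ below $1$ once $q$ is sufficiently large; strict monotonicity in $s$ then forces $\gamma_k(q)<s$ eventually. Strict decrease (1) can finally be deduced from the convexity in (4) together with the boundary information $\gamma_k(0)>0$ (from $\Psi_k^{0,0}=|\mathcal{I}|^k\geq 2>1$) and $\lim_{q\to\infty}\gamma_k(q)=-\infty$: a convex function connecting strictly separated endpoint values and diverging to $-\infty$ at infinity cannot be constant on any subinterval. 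The main obstacle will be the limit claim in (3), where the crude inequality $\tau_{\textbf{\emph{i}}}(q)\leq q-1$ may have to be exploited carefully depending on the interplay between $p_{\max}$, $\alpha_{\min}$ and $\alpha_{\max}$; the H\"older-based proof of (4) is, by contrast, notably clean, and once it is in place (1) and (2) essentially come for free.
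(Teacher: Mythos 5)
Your argument for (4) is essentially the paper's own: each $\log\psi^{s,q}(\textbf{\emph{i}})$ is jointly convex in $(s,q)$ because $\tau_{\textbf{\emph{i}}}$ is convex and $\log\alpha_1(\textbf{\emph{i}})-\log\alpha_2(\textbf{\emph{i}})\geq 0$, and then H\"older passes this to $\Psi_k$. The paper treats (1)--(3) as immediate consequences of the definition, so the remaining question is whether your elaborations of those parts go through, and there is a real snag in (3). The bound $\tau_{\textbf{\emph{i}}}(q)\leq q-1$ that you invoke is not what ``$L^q$-dimension at most one'' yields in the paper's sign convention ($\tau$ decreasing, $\tau(1)=0$): that statement gives the \emph{lower} bound $\tau_{\textbf{\emph{i}}}(q)\geq 1-q$ for $q>1$, whereas $\tau_{\textbf{\emph{i}}}(q)\leq q-1$ holds only because $\tau_{\textbf{\emph{i}}}(q)\leq 0$ and $q-1\geq 0$, and it fails for $q<1$. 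Worse, that upper bound is too weak to close the argument: it does not force $\tau_{\textbf{\emph{i}}}(q)\leq 0$, so a priori $\bigl(\alpha_1(\textbf{\emph{i}})/\alpha_2(\textbf{\emph{i}})\bigr)^{\tau_{\textbf{\emph{i}}}(q)}$ could grow like $(\alpha_{\max}/\alpha_{\min})^{k(q-1)}$, which competes with $p(\textbf{\emph{i}})^q$ and only loses when $p_{\max}<\alpha_{\min}/\alpha_{\max}$ --- this is exactly the ``interplay'' you flag, and it does not resolve favourably in general. The fix is simple: since $\tau_{\textbf{\emph{i}}}$ is decreasing with $\tau_{\textbf{\emph{i}}}(1)=0$, one has $\tau_{\textbf{\emph{i}}}(q)\leq 0$ for $q\geq 1$, hence $(\alpha_1/\alpha_2)^{\tau_{\textbf{\emph{i}}}(q)}\leq 1$ and $\Psi_k^{s,q}\leq |\mathcal I|^k p_{\max}^{kq}\max\{\alpha_{\min}^{ks},\alpha_{\max}^{ks}\}\to 0$ as $q\to\infty$, with no case analysis. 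Finally, for (1) it is cleaner (and avoids the subtlety of invoking convexity, which you only have on $(0,\infty)$, to conclude strict decrease on all of $[0,\infty)$) to observe directly that $\Psi_k^{s,q}$ is strictly decreasing in $q$ for fixed $s$ --- $p(\textbf{\emph{i}})^q$ is strictly decreasing, $\tau_{\textbf{\emph{i}}}$ is decreasing, and $\alpha_1(\textbf{\emph{i}})/\alpha_2(\textbf{\emph{i}})\geq 1$ --- which together with strict decrease in $s$ gives strict monotonicity of $\gamma_k$ on $[0,\infty)$ immediately.
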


We will prove Lemma \ref{gammak} in Section \ref{gammakproofsnew}.

\begin{lma}[Properties of $\gamma$] \label{gamma} \hspace{1mm}
\begin{itemize}
\item[(1)] $\gamma$ is strictly decreasing on $[0,\infty)$;
\item[(2)] $\gamma$ is continuous on $(0,\infty)$;
\item[(3)] $\gamma$ is the pointwise limit of $\gamma_k$ as $k \to \infty$;
\item[(4)] $\gamma(1) = 0$ and $\lim_{q \to \infty} \gamma(q) = -\infty$;
\item[(5)] $\gamma$ is convex on $(0,\infty)$.
\end{itemize}
\end{lma}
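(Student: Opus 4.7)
The plan is to deduce each of the five properties from Lemma \ref{P}, the analogous Lemma \ref{gammak} for $\gamma_k$, and the definition $P(\gamma(q),q)=1$. The skeleton is very standard for this type of implicitly defined pressure function, so I expect no serious obstacle; the subtlety is in part (3), where the passage $\gamma_k \to \gamma$ has to be extracted from $(\Psi_k^{s,q})^{1/k} \to P(s,q)$ using monotonicity in $s$.

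\textbf{Parts (1) and (4).} For strict monotonicity of $\gamma$, fix $0 \le q_1 < q_2$. By Lemma \ref{P}(3) applied in the $q$-variable, $P(\gamma(q_1),q_2) < P(\gamma(q_1),q_1) = 1 = P(\gamma(q_2),q_2)$, and strict monotonicity in $s$ then forces $\gamma(q_1) > \gamma(q_2)$. For $\gamma(1)=0$, I would note that $\tau_{\mathbf{i}}(1)=0$ for every $\mathbf{i}$ (being the value of an $L^q$-spectrum at $q=1$), so $\Psi_k^{0,1} = \sum_{\mathbf{i}\in\mathcal{I}^k} p(\mathbf{i}) = 1$ for all $k$, giving $P(0,1)=1$ and hence $\gamma(1)=0$ by uniqueness. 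For the limit as $q\to\infty$, the last inequality in Lemma \ref{P}(1) gives $1 = P(\gamma(q),q) \le p_{\max}^q P(\gamma(q),0)$, so $P(\gamma(q),0) \ge p_{\max}^{-q} \to \infty$. But the first inequality in Lemma \ref{P}(1) (with $t=0$) yields $P(s,0) \le \max\{\alpha_{\min}^s,\alpha_{\max}^s\}P(0,0)$, which is bounded on any half-line $[-M,\infty)$. Combined with strict monotonicity in $s$, this forces $\gamma(q)\to-\infty$.

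\textbf{Part (2).} For continuity on $(0,\infty)$, fix $q_0 > 0$ and $\varepsilon > 0$. Strict monotonicity of $P(\cdot, q_0)$ gives $P(\gamma(q_0)-\varepsilon,q_0) > 1 > P(\gamma(q_0)+\varepsilon,q_0)$. By Lemma \ref{P}(2), $P$ is continuous on $\mathbb{R}\times(0,\infty)$, so these strict inequalities persist on an open neighbourhood of $q_0$; strict monotonicity in $s$ then traps $\gamma(q)$ inside $(\gamma(q_0)-\varepsilon,\gamma(q_0)+\varepsilon)$ on that neighbourhood.

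\textbf{Part (3).} This is the most delicate step. Fix $q \ge 0$ and $\varepsilon > 0$. Because each $\alpha_2(\mathbf{i}) \in (0,1)$, the map $s \mapsto \Psi_k^{s,q}$ is strictly decreasing, so $\gamma_k(q)$ is well defined and equivalent to $(\Psi_k^{s,q})^{1/k}=1$. Since $(\Psi_k^{s,q})^{1/k} \to P(s,q)$ pointwise in $s$ by the definition of $P$, and $P(\gamma(q)-\varepsilon,q) > 1 > P(\gamma(q)+\varepsilon,q)$, for all sufficiently large $k$ we have $(\Psi_k^{\gamma(q)-\varepsilon,q})^{1/k} > 1$ and $(\Psi_k^{\gamma(q)+\varepsilon,q})^{1/k} < 1$. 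Monotonicity in $s$ then places $\gamma_k(q)$ in $(\gamma(q)-\varepsilon,\gamma(q)+\varepsilon)$, giving $\gamma_k(q) \to \gamma(q)$.

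\textbf{Part (5).} Convexity is then immediate: by Lemma \ref{gammak}(4), each $\gamma_k$ is convex on $(0,\infty)$, and convexity is preserved under pointwise limits, since $\gamma(tq_1 + (1-t)q_2) = \lim_k \gamma_k(tq_1+(1-t)q_2) \le \lim_k \bigl(t\gamma_k(q_1) + (1-t)\gamma_k(q_2)\bigr) = t\gamma(q_1) + (1-t)\gamma(q_2)$ for $t \in [0,1]$ and $q_1,q_2 \in (0,\infty)$.
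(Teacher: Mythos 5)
Your proof is correct and follows the same overall structure as the paper's, deriving each property from Lemmas \ref{P} and \ref{gammak} and the defining identity $P(\gamma(q),q)=1$. Parts (1), (3) and (5) are essentially identical in both. The genuine divergence is in part (4): the paper infers $\gamma(1)=0$ and $\gamma(q)\to-\infty$ from part (3) together with Lemma \ref{gammak}(3), i.e.\ by transferring the corresponding facts about the $\gamma_k$ through the pointwise limit, whereas you prove both directly from Lemma \ref{P}. Your argument that $\gamma(1)=0$ via $\Psi_k^{0,1}=\sum_{\textbf{\emph{i}}}p(\textbf{\emph{i}})=1$ is short and self-contained, and your proof that $\gamma(q)\to-\infty$ using $1=P(\gamma(q),q)\leq p_{\max}^q P(\gamma(q),0)$ together with boundedness of $P(\cdot,0)$ on any half-line $[-M,\infty)$ is arguably more robust than the paper's route, since pushing the $q\to\infty$ limit through the pointwise-in-$k$ convergence $\gamma_k\to\gamma$ implicitly requires a one-sided comparison between $\gamma$ and some fixed $\gamma_k$ (which holds in the submultiplicative regime but is not spelled out). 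Your treatment of (2) is also slightly cleaner: you trap $\gamma(q)$ in an $\varepsilon$-interval directly using strict monotonicity and local persistence of the inequalities $P(\gamma(q_0)\mp\varepsilon,q_0)\gtrless 1$, whereas the paper argues by contradiction with a sequence $q_n\to q$ and tacitly assumes $\gamma(q_n)$ converges (a convergent-subsequence step it leaves implicit). These are refinements in rigour rather than a different method; the key lemmas and decomposition are the paper's own.
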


We will prove Lemma \ref{gamma} in Section \ref{gammaproofs}.  One further key property of the $\gamma_k$ and $\gamma$ functions is differentiability.  This is more awkward to establish and, unsurprisingly, more important in terms of applications.  We will conduct a detailed study of this problem in Sections \ref{closedformsection}-\ref{diffingeneral}.

\subsection{$L^q$-spectra for our class of self-affine measures}

We can now state the main result od the paper, which in principle says that the $L^q$-spectrum of the self-affine measures introduced in Section \ref{boxdef} are equal to the moment scaling function defined in Section \ref{gammadefsection}.

\begin{thm} \label{main}
Let $\mu$ be a box-like self-affine measure.  Then
\begin{itemize}
\item[(1)] For $q \in [0,1]$, we have
\[
\overline{T}_{\text{C}, \mu}(q) = \overline{T}_{\P, \mu}(q)  = \overline{\tau}_\mu(q)  \leq \gamma(q);
\]
\item[(2)] For $q\geq1$, we have
\[
\gamma(q) \leq   \underline{T}_{\P, \mu}(q) = \underline{\tau}_\mu(q);
\]
\item[(3)] If, in addition, $\mu$ satisfies the ROSC, then,  for all $q \geq 0$, we have
\[
 T_{\P, \mu}(q) = \tau_\mu(q) = \gamma(q).
\]
\end{itemize}
\end{thm}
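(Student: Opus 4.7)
The plan is to estimate the integral $I_\mu(q,\delta):=\int\mu(B(x,\delta))^{q-1}\,d\mu(x)$ at small scales by organising the geometry around the stopping set $\mathcal{M}_\delta=\{\textbf{\emph{i}}\in\mathcal{I}^*:\alpha_2(\textbf{\emph{i}})\leq\delta<\alpha_2(\textbf{\emph{i}}^-)\}$: on $\mathcal{M}_\delta$ every cylinder rectangle $R_\textbf{\emph{i}}=S_\textbf{\emph{i}}([0,1]^2)$ has short side of order $\delta$ and is covered by $\sim\max\{1,\alpha_1(\textbf{\emph{i}})/\delta\}$ balls of radius $\delta$, and the word-dependent projection $\pi_\textbf{\emph{i}}$ onto the long axis of $R_\textbf{\emph{i}}$ sends these balls to intervals of length $\lesssim\delta$ on which $\mu|_{R_\textbf{\emph{i}}}$ pushes forward to $p(\textbf{\emph{i}})$ times a rescaled copy of $\pi_\textbf{\emph{i}}\mu$. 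Consequently the per-cylinder contribution to $\sum_B\mu(B)^q$ is governed, up to a lower-order error $\epsilon$ coming from the existence of $\tau_\textbf{\emph{i}}(q)$, by $p(\textbf{\emph{i}})^q\alpha_1(\textbf{\emph{i}})^{\tau_\textbf{\emph{i}}(q)}\delta^{-\tau_\textbf{\emph{i}}(q)\pm\epsilon}\asymp\delta^{-\gamma(q)\pm\epsilon}\psi^{\gamma(q),q}(\textbf{\emph{i}})$. Theorem \ref{gdexists} enters crucially here: in the non-separated case the projections $\pi_1\mu,\pi_2\mu$ are graph-directed self-similar rather than self-similar, and its conclusion $\tau_1\equiv\tau_2$ is what allows $\psi^{s,q}$ (and hence $\gamma$) to be defined unambiguously across the points where consecutive maps swap which axis is long.

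For part (1), with $q\in[0,1]$, I would use the subadditivity $(a+b)^q\leq a^q+b^q$ to split $\mu(B)^q$ across the cylinders meeting $B$, which costs nothing and needs no separation; combining with the projection bound reduces the task to showing $\sum_{\textbf{\emph{i}}\in\mathcal{M}_\delta}\psi^{\gamma(q),q}(\textbf{\emph{i}})$ grows only sub-polynomially in $1/\delta$, and I would obtain this by sandwiching $\mathcal{M}_\delta$ between full levels $\mathcal{I}^{k_1}$ and $\mathcal{I}^{k_2}$ with $k_j\asymp\log(1/\delta)$ and invoking Lemma \ref{additive} together with $(\Psi_k^{\gamma(q),q})^{1/k}\to P(\gamma(q),q)=1$. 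Letting $\epsilon\downarrow 0$ then gives $\overline{\tau}_\mu(q)\leq\gamma(q)$. For part (2), with $q\geq 1$, no separation is needed either: starting from the self-affine recursion $\mu=\sum_{\textbf{\emph{i}}\in\mathcal{I}^k}p(\textbf{\emph{i}})\,\mu\circ S_\textbf{\emph{i}}^{-1}$ and the affine inclusion $S_\textbf{\emph{i}}(B(y,\delta/\alpha_1(\textbf{\emph{i}})))\subseteq B(S_\textbf{\emph{i}} y,\delta)$, I would obtain a recursive lower bound on $I_\mu(q,\delta)$; inserting the lower-bound direction of the projected $L^q$-spectra at the appropriate scale converts this into $I_\mu(q,\delta)\gtrsim\delta^{-\gamma(q)+\epsilon}\Psi_k^{\gamma(q),q}$ for $k$ chosen so that $\alpha_{\max}^k\asymp\delta$, and $(\Psi_k^{\gamma(q),q})^{1/k}\to 1$ then yields $\underline{\tau}_\mu(q)\geq\gamma(q)$. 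In both parts the identifications among $\overline{T}_{\text{C},\mu}$, $\overline{T}_{\P,\mu}$, $\overline{\tau}_\mu$ and $\underline{T}_{\P,\mu},\underline{\tau}_\mu$ in the appropriate ranges come from the proposition on dimension-function relationships.

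For part (3) the ROSC lets me upgrade these one-sided inequalities to equalities. Disjointness of $\{S_\textbf{\emph{i}}(R)\}_{\textbf{\emph{i}}\in\mathcal{I}^k}$ gives a bounded-multiplicity partition of $\text{supp}\,\mu$, which reverses the subadditive loss in part (1) and produces the matching lower bound $\sum_B\mu(B)^q\gtrsim \delta^{-\gamma(q)-\epsilon}\Psi_k^{\gamma(q),q}$, and it also lets me place test balls inside a single cylinder $R_\textbf{\emph{i}}$ on which $\mu$ coincides with $p(\textbf{\emph{i}})\mu\circ S_\textbf{\emph{i}}^{-1}$, turning the recursion of part (2) into a matching upper bound. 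Combining gives $\tau_\mu(q)=\gamma(q)$ for all $q\geq 0$, and $T_{\P,\mu}(q)=\tau_\mu(q)$ is again from the dimension-function proposition. The hardest part will be coordinating the word-dependent projection $\pi_\textbf{\emph{i}}$ in the non-separated case: the long side of $R_\textbf{\emph{i}}$ can flip orientation under concatenation, which is precisely why $\psi^{s,q}$ in Lemma \ref{additive} is only sub-/super-multiplicative rather than exactly multiplicative, so comparing $\mathcal{M}_\delta$ (which contains words of mixed lengths) with the level sets $\mathcal{I}^k$ needs careful bookkeeping, and the coherence of the whole pressure-function approach rests on Theorem \ref{gdexists} guaranteeing that $\tau_\textbf{\emph{i}}(q)$ is actually the same function regardless of the choice of $\pi_\textbf{\emph{i}}$.
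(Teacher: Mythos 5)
Your overall strategy matches the paper's: stop at the scale where the short side $\alpha_2(\textbf{\emph{i}})\asymp\delta$, project $\mu_{\textbf{\emph{i}}}$ onto the long axis of the cylinder to reduce the per-cylinder contribution to a rescaled copy of $\pi_{\textbf{\emph{i}}}\mu$ at scale $\delta/\alpha_1(\textbf{\emph{i}})$, use $(\sum a_i)^q\leq\sum a_i^q$ for $q\in[0,1]$ (and the reverse for $q\geq 1$) without any separation hypothesis, and invoke the ROSC to get a uniform bound on how many stopping cylinders meet a $\delta$-box so that Lemma \ref{inequality1} reverses these inequalities. The paper works with the dyadic grid count $D_\delta^q(\mu)$ rather than $\int\mu(B(x,\delta))^{q-1}\,d\mu$, but this is interchangeable for $q\geq0$, and your observation that Theorem \ref{gdexists} (in particular $\tau_1\equiv\tau_2$ in the non-separated case) is what makes $\tau_{\textbf{\emph{i}}}(q)$ and hence $\psi^{s,q}$ coherent is correct and is exactly the role it plays in the paper.

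However, the step you defer as ``careful bookkeeping'' --- comparing $\sum_{\textbf{\emph{i}}\in\mathcal{I}_\delta}\psi^{t,q}(\textbf{\emph{i}})$ with the level sums $\Psi_k^{t,q}$ --- is the crux, and ``sandwiching $\mathcal{M}_\delta$ between full levels $\mathcal{I}^{k_1},\mathcal{I}^{k_2}$'' does not resolve it, precisely because $\psi^{s,q}$ is only sub- or super-multiplicative (Lemma \ref{additive}), never multiplicative except at $s=\tau_1(q)+\tau_2(q)$; a direct sandwich would need exact multiplicativity. The paper isolates this as Lemma \ref{deltaconv}. For $t>\gamma(q)$ (part (1)) the argument is easy: $\sum_{\textbf{\emph{i}}\in\mathcal{I}_\delta}\psi^{t,q}(\textbf{\emph{i}})\leq\sum_k\Psi_k^{t,q}<\infty$ since $P(t,q)<1$ gives geometric decay. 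But for $t<\gamma(q)$ (part (2)) the argument must distinguish the submultiplicative regime $t\leq\tau_1(q)+\tau_2(q)$, where one decomposes $\textbf{\emph{i}}\in\mathcal{I}^k$ into $\mathcal{I}_\delta$-blocks plus a bounded tail and argues from the assumed bound $\sum_{\mathcal{I}_\delta}\psi^{t,q}\leq 1$ to a contradiction with $P(t,q)>1$, from the supermultiplicative regime $t>\tau_1(q)+\tau_2(q)$, where one fixes a level $k$ with $\Psi_k^{t,q}\geq1$ and concatenates $k$-blocks until $\alpha_2$ drops below $\delta$. Neither follows from $(\Psi_k^{t,q})^{1/k}\to P(t,q)$ alone, and a complete proof must supply one of these.

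Two smaller but genuine errors: in part (2) you choose $k$ with $\alpha_{\max}^k\asymp\delta$, but then every $\textbf{\emph{i}}\in\mathcal{I}^k$ has $\alpha_1(\textbf{\emph{i}})\lesssim\delta$, so $\delta/\alpha_1(\textbf{\emph{i}})\geq1$ and the projected $L^q$-spectrum is being evaluated at scales where it carries no information --- the stopping must be $\alpha_2(\textbf{\emph{i}})\asymp\delta$ with $\alpha_1$ unconstrained, so that $\delta/\alpha_1(\textbf{\emph{i}})$ is small exactly for elongated cylinders. And in part (3) the bounded multiplicity provided by the ROSC is a bound on $\#\{\textbf{\emph{i}}\in\mathcal{I}_\delta:\mu_{\textbf{\emph{i}}}(Q)>0\}$ for each $\delta$-box $Q$, coming from disjointness of $\{S_{\textbf{\emph{i}}}(R)\}_{\textbf{\emph{i}}\in\mathcal{I}_\delta}$ together with the lower bound $\alpha_2(\textbf{\emph{i}})\geq\alpha_{\min}\delta$, not from a partition by $\{S_{\textbf{\emph{i}}}(R)\}_{\textbf{\emph{i}}\in\mathcal{I}^k}$ at a single level, where short sides can be arbitrarily small relative to $\delta$.
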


We will prove Theorem \ref{main} in Section \ref{mainproof}.  The following Corollary relates to the multifractal Hausdorff and packing spectra of $\mu$.
\begin{cor} \label{coro4444}
Let $\mu$ be a box-like self-affine measure which satisfies the ROSC.  Then
\[
f_{\text{\emph{H}},\mu}(\alpha) \leq f_{\text{\emph{P}},\mu}(\alpha) \leq \gamma^*(\alpha).
\]
\end{cor}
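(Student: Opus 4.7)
The plan is to derive the corollary directly from Theorem \ref{main}(3) and Proposition \ref{upperbounds}; no new analytic work should be required, and indeed this is why the framework was set up to output $\gamma(q)$ as the $L^q$-spectrum in the first place. Under the ROSC hypothesis, Theorem \ref{main}(3) gives $T_{\P,\mu}(q) = \tau_\mu(q) = \gamma(q)$ for every $q \geq 0$, so in particular the upper packing multifractal box dimension $\overline{T}_{\P,\mu}(q)$ coincides with $\gamma(q)$ throughout the range of interest.

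The next step is to invoke the upper chain of inequalities in Proposition \ref{upperbounds}: for every $\alpha \geq 0$,
\[
f_{\text{H},\mu}(\alpha) \;\leq\; f_{\text{P},\mu}(\alpha) \;\leq\; \overline{T}^*_{\P,\mu}(\alpha).
\]
Since the Legendre transform is order-reversing in the underlying function in the sense that $\phi_1 \equiv \phi_2$ implies $\phi_1^* \equiv \phi_2^*$, substituting $\overline{T}_{\P,\mu}(q)=\gamma(q)$ on $[0,\infty)$ yields $\overline{T}^*_{\P,\mu}(\alpha) = \gamma^*(\alpha)$. Chaining this with the previous inequality gives the claimed estimate $f_{\text{H},\mu}(\alpha) \leq f_{\text{P},\mu}(\alpha) \leq \gamma^*(\alpha)$.

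There is essentially no obstacle here beyond bookkeeping; the only mild subtlety is that $\gamma$ has only been defined on $[0,\infty)$, so one should interpret $\gamma^*(\alpha) = \inf_{q \geq 0}\bigl(q\alpha - \gamma(q)\bigr)$, consistent with how $\overline{T}^*_{\P,\mu}$ enters Proposition \ref{upperbounds} via the $q\geq 0$ portion of $\overline{T}_{\P,\mu}$. Everything else is a direct quotation of already-established results, so the proof itself will reduce to a couple of lines.
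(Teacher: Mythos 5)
Your proposal is correct and follows essentially the same route as the paper's one-line proof, which simply cites Proposition \ref{upperbounds}; your elaboration makes explicit the step through Theorem \ref{main}(3) that the paper leaves implicit.

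One small imprecision worth noting: you state $\overline{T}^*_{\P,\mu}(\alpha) = \gamma^*(\alpha)$ after observing that the two underlying functions agree on $[0,\infty)$, but Proposition \ref{upperbounds} involves the Legendre transform of $\overline{T}_{\P,\mu}$ over \emph{all} of $\mathbb{R}$, whereas Theorem \ref{main} only identifies $\overline{T}_{\P,\mu}$ with $\gamma$ on $[0,\infty)$. Since the Legendre transform here is an infimum, restricting the domain of the infimum to $q\geq 0$ can only enlarge the result, so the correct statement is $\overline{T}^*_{\P,\mu}(\alpha)\leq \gamma^*(\alpha)$ (with $\gamma^*$ interpreted over $q\geq 0$ as you say). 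This inequality goes the right way, so your final chain $f_{\text{H},\mu}(\alpha)\leq f_{\text{P},\mu}(\alpha)\leq \overline{T}^*_{\P,\mu}(\alpha)\leq\gamma^*(\alpha)$ holds and the corollary is proved; it is just not obtained by an equality of Legendre transforms, and ``$\phi_1\equiv\phi_2\Rightarrow\phi_1^*\equiv\phi_2^*$'' is not really an order-reversing property, just functoriality, which is not what you need here.
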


\begin{proof} See Proposition \ref{upperbounds}.
\end{proof}

We note that the upper bound for the multifractal spectra given in Corollary \ref{coro4444} is certainly not sharp in general.  Olsen \cite{sponges} demonstrated that, even in the much simpler Bedford-McMullen situation, $f_{\text{H},\mu}(\alpha) <\gamma^*(\alpha)$ is possible and more recently Reeve \cite[Theorem 7, Example 3]{reeve} and Jordan and Rams (personal communication) have shown that, also in the Bedford-McMullen setting, the packing spectrum exhibits many strange phenomenon and is in general not equal to $\gamma^*(\alpha)$, disproving a conjecture of Olsen, see \cite[Conjecture 4.1.7]{sponges}.

\subsection{Closed form expressions in the separated case}
\label{closedformsection}

In this section we use our main results to derive a very simple \emph{closed form expression} for $\gamma$ in the separated case.  These formulae are of particular interest in relation to \cite[Theorem 1]{fengaffine}, where the (negative of the) function $\gamma$ is expressed as a minimum of two expressions which are both infima over a simplex of probabilty vectors - in particular, not a closed form expression.  Since the class covered in \cite{fengaffine} is (strictly) contained in our separated class, this section provides very useful information if one is interested in explicit calculation. Moreover, we use the closed form expression to study the differentiability of the $L^q$-spectrum and the dimensions fo the measure and its support.
\\ \\
Assume $\mu$ is of separated type, which means that the linear part of each map $S_i$ in the defining IFS is of the form
\[
\left ( \begin{array}{cc}
\pm c_i &0\\ 
0 & \pm d_i\\
\end{array} \right )
\]
for constants $c_i, d_i \in (0,1)$, which are the singular values of $S_i$.  The $q$-modified singular value function is not necessarily multiplicative in this setting, but the functions
\[
p(\textbf{\emph{i}})^q \, c_\textbf{\emph{i}}^{\tau_1(q)} \, d_\textbf{\emph{i}}^{s-\tau_1(q)}
\]
and
\[
p(\textbf{\emph{i}})^q \,  d_\textbf{\emph{i}}^{\tau_2(q)} \, c_\textbf{\emph{i}}^{s-\tau_2(q)}
\]
\emph{are} multiplicative in $\textbf{\emph{i}}$.  This is not true in the non-separated case and is the key difference in the two settings.  Define functions $\gamma_A, \gamma_B: [0,\infty) \to \mathbb{R}$ by
\[
\sum_{i \in \mathcal{I}} p_i^q \, c_i^{\tau_1(q)} \, d_i^{\gamma_A(q)-\tau_1(q)} =1
\]
and
\[
\sum_{i \in \mathcal{I}} p_i^q \, d_i^{\tau_2(q)} \, c_i^{\gamma_B(q)-\tau_2(q)} =1
\]
respectively.

\begin{lma} \label{closeddiffs2}
If $\tau_1$ is differentiable at $q>0$, then $\gamma_A$ is differentiable at $q$, with
\[
\gamma'_A(q) =  - \frac{\sum_{i \in \mathcal{I}} p_i^q \, c_i^{ \tau_1(q)} \, \,  d_i^ {\gamma_A(q)-\tau_1(q)}\log \Big( p_i c_i^{\tau_1'(q)}   d_i^{- \tau_1'(q)} \Big)}{\sum_{i \in \mathcal{I}} p_i^q \,  c_i^{ \tau_1(q)} \, \,  d_i^ {\gamma_A(q)-\tau_1(q)}\log  d_i}
\]
and if $\tau_2$ is differentiable at $q>0$, then $\gamma_B$ is differentiable at $q$, with
\[
\gamma'_B(q) =  - \frac{\sum_{i \in \mathcal{I}} p_i^q \, d_i^{ \tau_2(q)} \, \,  c_i^ {\gamma_B(q)-\tau_2(q)}\log \Big( p_i d_i^{\tau_2'(q)}   c_i^{- \tau_2'(q)} \Big)}{\sum_{i \in \mathcal{I}} p_i^q \,  d_i^{ \tau_2(q)} \, \,  c_i^ {\gamma_B(q)-\tau_2(q)}\log  c_i}.
\]
\end{lma}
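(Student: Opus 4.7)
The plan is to view the defining equation for $\gamma_A$ as implicitly defining $\gamma_A$ in terms of $q$ and apply the implicit function theorem. Set
\[
F(q, \gamma) = \sum_{i \in \mathcal{I}} p_i^q \, c_i^{\tau_1(q)} \, d_i^{\gamma - \tau_1(q)} - 1,
\]
so that $\gamma_A$ is characterised by $F(q, \gamma_A(q)) = 0$. For each fixed $q$ the map $\gamma \mapsto F(q, \gamma)$ is smooth, with partial derivative
\[
\frac{\partial F}{\partial \gamma}(q, \gamma_A(q)) = \sum_{i \in \mathcal{I}} p_i^q \, c_i^{\tau_1(q)} \, d_i^{\gamma_A(q) - \tau_1(q)} \log d_i,
\]
which is strictly negative since $d_i \in (0,1)$ and the other factors are positive. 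This provides the non-degeneracy needed to invert the implicit relation.

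Next, using the differentiability of $\tau_1$ at $q>0$, I would differentiate $F$ in $q$ at the point $(q, \gamma_A(q))$ via the chain rule, collecting the three contributions from $p_i^q$, $c_i^{\tau_1(q)}$ and $d_i^{\gamma-\tau_1(q)}$:
\[
\frac{\partial F}{\partial q}(q, \gamma_A(q)) = \sum_{i \in \mathcal{I}} p_i^q \, c_i^{\tau_1(q)} \, d_i^{\gamma_A(q) - \tau_1(q)} \Big( \log p_i + \tau_1'(q) \log c_i - \tau_1'(q) \log d_i \Big),
\]
which after combining logs equals the numerator in the stated formula. The implicit function theorem then yields
\[
\gamma_A'(q) = - \frac{\partial_q F(q, \gamma_A(q))}{\partial_\gamma F(q, \gamma_A(q))},
\]
and substitution produces exactly the displayed expression. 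The case of $\gamma_B$ is strictly symmetric, interchanging the roles of $(c_i, \tau_1)$ and $(d_i, \tau_2)$.

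The main technical obstacle is that the classical implicit function theorem requires $F$ to be $C^1$ on an open neighbourhood of $(q, \gamma_A(q))$, whereas we only assume $\tau_1$ is differentiable at the single point $q$. To handle this I would use the convexity of $\tau_1$ on $(0,\infty)$ (already noted in the paper): convexity gives monotone one-sided derivatives everywhere and continuity of $\tau_1$ near $q$, so that $F$ is continuous in $(q,\gamma)$ in a neighbourhood and $\partial_\gamma F$ is continuous and nonzero there. One can then argue directly from the identity $F(q+h, \gamma_A(q+h)) = F(q, \gamma_A(q)) = 0$: expanding to first order in $\gamma$ and applying the assumed $q$-differentiability of $\tau_1$ in the $h$-direction gives that the difference quotient $(\gamma_A(q+h)-\gamma_A(q))/h$ converges to the claimed value. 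Alternatively, convexity of $\gamma_A$ (which follows by the same implicit function construction applied to the left and right derivatives of $\tau_1$, yielding left and right derivatives of $\gamma_A$ via the formula) reduces the claim to showing the one-sided derivatives agree, and they do precisely because $\tau_1'(q^+) = \tau_1'(q^-)$ by hypothesis.
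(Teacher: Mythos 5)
Your proof is correct and follows the same route as the paper, whose entire argument is the one-line remark that the lemma ``follows immediately by implicit differentiation of the definitions of $\gamma_A$ and $\gamma_B$.'' You have merely supplied the computation of $\partial_q F$ and $\partial_\gamma F$ explicitly and flagged (and resolved) the subtlety that $\tau_1$ is only assumed differentiable at the single point $q$ --- a point the cleanest way to settle is to note that $\sum_i p_i^q c_i^\tau d_i^{\gamma-\tau}=1$ defines $\gamma$ as a genuinely smooth function $g(q,\tau)$ of two real variables, so $\gamma_A=g(q,\tau_1(q))$ is differentiable at $q$ by the chain rule whenever $\tau_1$ is.
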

\begin{proof}
This follows immediately by implicit differentiation of the definitions of $\gamma_A$ and $\gamma_B$.
\end{proof}

\begin{lma} \label{notmiddle}
Let $q \geq 0$.  Either
\[
\max\{\gamma_A(q), \gamma_B(q)\} \leq \tau_1(q)+\tau_2(q)
\]
or
\[
\min\{\gamma_A(q), \gamma_B(q)\} \geq \tau_1(q)+\tau_2(q).
\]
Also, if $\tau_1$ and $\tau_2$ are differentiable at $1$, then either
\[
\max\{\gamma_A'(1), \gamma_B'(1)\} \leq \tau'_1(1)+\tau'_2(1)
\]
or
\[
\min\{\gamma'_A(1), \gamma'_B(1)\} \geq \tau'_1(1)+\tau'_2(1).
\]
\end{lma}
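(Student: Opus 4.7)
The plan is to exploit the fact that the defining equations for $\gamma_A(q)$ and $\gamma_B(q)$ \emph{coincide} at $s = \tau_1(q)+\tau_2(q)$, and then to use implicit differentiation to obtain an analogous coincidence for the derivatives at $q=1$.

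For the first assertion, I would introduce
\[
f_A(s) := \sum_{i \in \mathcal{I}} p_i^q \, c_i^{\tau_1(q)}\, d_i^{s - \tau_1(q)} \qquad \text{and} \qquad f_B(s) := \sum_{i \in \mathcal{I}} p_i^q \, d_i^{\tau_2(q)}\, c_i^{s - \tau_2(q)},
\]
both strictly decreasing in $s$ since $c_i,d_i \in (0,1)$, with $\gamma_A(q)$ and $\gamma_B(q)$ their respective (unique) preimages of $1$. Direct substitution gives
\[
f_A\bigl(\tau_1(q)+\tau_2(q)\bigr) \; = \; \sum_{i \in \mathcal{I}} p_i^q\, c_i^{\tau_1(q)}\, d_i^{\tau_2(q)} \; = \; f_B\bigl(\tau_1(q)+\tau_2(q)\bigr);
\]
call this common value $\Sigma(q)$. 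By strict monotonicity, the sign of $\Sigma(q)-1$ simultaneously governs whether $\gamma_A(q)$ and $\gamma_B(q)$ are both at most, or both at least, $\tau_1(q)+\tau_2(q)$, which is exactly the asserted dichotomy.

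For the derivative claim I would implicitly differentiate $f_A(\gamma_A(q)) \equiv 1$ and $f_B(\gamma_B(q)) \equiv 1$ at $q = 1$, using Lemma \ref{closeddiffs2} to justify the differentiability of $\gamma_A$ and $\gamma_B$ there. The key simplification is that $\tau_1(1) = \tau_2(1) = 0$ (standard) and $\gamma_A(1) = \gamma_B(1) = 0$, since $\Sigma(1) = \sum_i p_i = 1$ so part (a) at $q = 1$ pins both values to $\tau_1(1)+\tau_2(1) = 0$. Every prefactor $p_i^q c_i^{\tau_1(q)} d_i^{\gamma_A(q)-\tau_1(q)}$ therefore collapses to $p_i$ at $q=1$. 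Writing $H = -\sum_i p_i \log p_i$, $\chi_c = -\sum_i p_i \log c_i$ and $\chi_d = -\sum_i p_i \log d_i$ (all strictly positive), a short algebraic manipulation of the two formulas from Lemma \ref{closeddiffs2} should yield
\[
\gamma_A'(1) - \bigl(\tau_1'(1)+\tau_2'(1)\bigr) \; = \; -\frac{\Delta}{\chi_d}, \qquad \gamma_B'(1) - \bigl(\tau_1'(1)+\tau_2'(1)\bigr) \; = \; -\frac{\Delta}{\chi_c},
\]
where $\Delta := H + \tau_1'(1)\chi_c + \tau_2'(1)\chi_d$ is a single common quantity. Since $\chi_c, \chi_d > 0$, the two deviations share a sign, giving the derivative dichotomy.

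Both halves of the lemma ultimately rest on the same symmetry between the two implicit equations, and I do not foresee a genuine obstacle. The only care needed is to verify that $\Delta$ really does emerge as the common factor after the implicit differentiation at $q=1$, so the main piece of work is the routine but attentive algebraic bookkeeping that identifies it.
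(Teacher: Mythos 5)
Your proposal is correct and follows essentially the same route as the paper: both halves hinge on noticing that the two implicit equations coincide at the pivot value $\tau_1+\tau_2$ (resp.\ $\tau_1'+\tau_2'$) and that each defining expression is strictly decreasing in $s$. Your $\Delta$ is precisely $-\sum_i p_i \log\bigl(p_i c_i^{\tau_1'(1)} d_i^{\tau_2'(1)}\bigr)$, the common value the paper evaluates at $s=\tau_1'(1)+\tau_2'(1)$, so the explicit algebra you carry out is just an unpacked version of the paper's monotonicity argument.
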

We will prove Lemma \ref{notmiddle} in Section \ref{notmiddleproof}.
\begin{thm} \label{closedform}
Let $\mu$ be of separated type and let $q \geq 0$. If $\max\{\gamma_A(q), \gamma_B(q)\} \leq \tau_1(q)+\tau_2(q)$, then
\[
\gamma(q) = \max\{\gamma_A(q), \gamma_B(q)\}.
\]
If $\min\{\gamma_A(q), \gamma_B(q)\} \geq \tau_1(q)+\tau_2(q)$, then
\[
\gamma(q) \leq \min\{\gamma_A(q), \gamma_B(q)\},
\]
with equality occurring if either of the following are satisfied:
\begin{itemize}
\item[(1)] $\sum_{i \in \mathcal{I}} p_i^q \, c_i^{\tau_1(q)} \, d_i^{\gamma_A(q)-\tau_1(q)} \, \log \big( c_i/d_i\big) \geq 0$
\item[(2)]  $\sum_{i \in \mathcal{I}} p_i^q \, d_i^{\tau_2(q)} \, c_i^{\gamma_B(q)-\tau_2(q)} \, \log \big( d_i/c_i\big) \geq 0$.
\end{itemize}
Moreover, if $c_i \geq d_i$ for all $i \in \mathcal{I}$, then $\gamma(q) = \gamma_A(q)$ for all $q\geq 0$, and if $d_i \geq c_i$ for all $i \in \mathcal{I}$, then $\gamma(q) = \gamma_B(q)$ for all $q\geq 0$, without any additional assumptions.
\end{thm}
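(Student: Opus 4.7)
The plan is to exploit the fact that, although $\psi^{s,q}$ is not itself multiplicative in the separated case, it coincides pointwise with one of two genuinely multiplicative weights
\[
a(\textbf{\emph{i}}) := p(\textbf{\emph{i}})^q c_\textbf{\emph{i}}^{\tau_1(q)} d_\textbf{\emph{i}}^{s-\tau_1(q)} \qquad \text{and} \qquad b(\textbf{\emph{i}}) := p(\textbf{\emph{i}})^q d_\textbf{\emph{i}}^{\tau_2(q)} c_\textbf{\emph{i}}^{s-\tau_2(q)},
\]
namely $\psi^{s,q}(\textbf{\emph{i}}) = a(\textbf{\emph{i}})$ when $c_\textbf{\emph{i}} \geq d_\textbf{\emph{i}}$ and $\psi^{s,q}(\textbf{\emph{i}}) = b(\textbf{\emph{i}})$ when $c_\textbf{\emph{i}} < d_\textbf{\emph{i}}$; this is immediate from the definitions of $\alpha_1, \alpha_2, \pi_\textbf{\emph{i}}$ in the separated setting. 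The one-level sums $A_1(s,q) = \sum_{i \in \mathcal{I}} p_i^q c_i^{\tau_1(q)} d_i^{s-\tau_1(q)}$ and $B_1(s,q) = \sum_{i \in \mathcal{I}} p_i^q d_i^{\tau_2(q)} c_i^{s-\tau_2(q)}$ then yield $\sum_{\textbf{\emph{i}} \in \mathcal{I}^k} a(\textbf{\emph{i}}) = A_1(s,q)^k$ and the analogous identity for $b$; both functions are strictly decreasing in $s$ with $A_1(\gamma_A(q), q) = B_1(\gamma_B(q), q) = 1$. The elementary identity $b(\textbf{\emph{i}})/a(\textbf{\emph{i}}) = (c_\textbf{\emph{i}}/d_\textbf{\emph{i}})^{s-\tau_1(q)-\tau_2(q)}$ combined with $c_\textbf{\emph{i}} < d_\textbf{\emph{i}}$ on the case-$2$ branch then delivers the key pointwise inequalities $\psi^{s,q}(\textbf{\emph{i}}) \geq a(\textbf{\emph{i}})$ whenever $s \leq \tau_1(q)+\tau_2(q)$ and $\psi^{s,q}(\textbf{\emph{i}}) \leq a(\textbf{\emph{i}})$ whenever $s \geq \tau_1(q)+\tau_2(q)$, with the mirror inequalities for $b$.

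The first two claims fall out quickly from here. Since every term of $\Psi_k^{s,q}$ equals either an $a$- or a $b$-term, one has the unconditional upper bound $\Psi_k^{s,q} \leq A_1(s,q)^k + B_1(s,q)^k$, whence $P(s,q) \leq \max\{A_1(s,q), B_1(s,q)\}$ for every $s$ after taking $k$-th roots. In the subadditive range $s \leq \tau_1(q)+\tau_2(q)$ the pointwise inequalities upgrade this to $\Psi_k^{s,q} \geq \max\{A_1(s,q)^k, B_1(s,q)^k\}$, so $P(s,q) = \max\{A_1, B_1\}$, and the strict monotonicity of $A_1, B_1$ in $s$ yields $\gamma(q) = \max\{\gamma_A(q), \gamma_B(q)\}$. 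In the superadditive range the reverse pointwise inequalities give $\Psi_k^{s,q} \leq \min\{A_1(s,q)^k, B_1(s,q)^k\}$ and hence $\gamma(q) \leq \min\{\gamma_A(q), \gamma_B(q)\}$.

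For the equality clause under condition (1), I would view $\nu(i) := p_i^q c_i^{\tau_1(q)} d_i^{\gamma_A(q) - \tau_1(q)}$ as a probability vector on $\mathcal{I}$ (it sums to $1$ by the definition of $\gamma_A$), so condition (1) reads $\mathbb{E}_\nu[\log(c_i/d_i)] \geq 0$. Sampling $\textbf{\emph{i}} \in \mathcal{I}^k$ i.i.d.\ from $\nu$, the partial sum $\sum_{j=1}^{k} \log(c_{i_j}/d_{i_j})$ has non-negative mean, so the law of large numbers (or the central limit theorem when the mean is zero but the variance is positive, or a trivial check when the variance is also zero) shows that $\nu^{\otimes k}\{\textbf{\emph{i}} : c_\textbf{\emph{i}} \geq d_\textbf{\emph{i}}\} \geq c_0 > 0$ uniformly in $k$. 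Since $\psi^{\gamma_A(q), q}(\textbf{\emph{i}}) = a(\textbf{\emph{i}}) = \nu(\textbf{\emph{i}})$ on this event, this gives $\Psi_k^{\gamma_A(q), q} \geq c_0$, so $P(\gamma_A(q), q) \geq 1$; the matching upper bound from paragraph two then forces $\gamma(q) = \gamma_A(q)$, and condition (2) handles the $\gamma_B$ case symmetrically. The ``moreover'' clause is essentially free: if $c_i \geq d_i$ for every $i \in \mathcal{I}$, then $c_\textbf{\emph{i}} \geq d_\textbf{\emph{i}}$ for every $\textbf{\emph{i}}$, case $2$ never occurs, $\Psi_k^{s,q} = A_1(s,q)^k$ identically, and $\gamma \equiv \gamma_A$ without any further hypothesis.

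The main obstacle I anticipate is the probabilistic estimate above: the borderline subcase in which $\mathbb{E}_\nu[\log(c_i/d_i)] = 0$ but the variance is positive requires the central limit theorem to rule out exponential decay of $\nu^{\otimes k}\{c_\textbf{\emph{i}} \geq d_\textbf{\emph{i}}\}$ (where the probability tends to $1/2$ rather than $1$), and the fully degenerate zero-variance case, where $c_i/d_i$ is constant in $i$, must be separated out. Everything else is routine bookkeeping of the two multiplicative weights $a, b$ against the sub/super-additive dichotomy of $s$ relative to $\tau_1(q)+\tau_2(q)$.
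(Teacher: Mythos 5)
Your proposal is correct and, for the hardest part of the theorem, takes a genuinely different and arguably more robust route than the paper.

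For the two monotonicity claims, the paper works with the finite-level functions $\gamma_k$ and an $\varepsilon$-argument: it shows $\gamma_B(q) \le \gamma_k(q)$ for all $k$, and then bounds $1 = \Psi_k^{\gamma_k(q),q} \le A_1(\gamma_k(q),q)^k + B_1(\gamma_k(q),q)^k$ and deduces that at least one term must be $\ge 1$. You instead identify $\psi^{s,q}(\textbf{\emph{i}})$ as exactly $\max\{a(\textbf{\emph{i}}),b(\textbf{\emph{i}})\}$ for $s$ below $\tau_1+\tau_2$ and exactly the $\min$ above it, which immediately gives $P(s,q)=\max\{A_1(s,q),B_1(s,q)\}$ in the subadditive range and $P(s,q)\le\min$ in the superadditive range. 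Same conclusions, but your pointwise $\max/\min$ identity makes the structure much more transparent.

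The serious divergence is in the equality clause under (1) or (2). The paper constructs a deterministic subfamily $\mathcal{J}_k \subset \mathcal{I}^{n(k)}$ of strings in which each letter $i$ appears exactly $\lfloor \theta_i k\rfloor$ times, argues from (1) that $c_{\textbf{\emph{j}}} \ge d_{\textbf{\emph{j}}}$ for such strings (so $\psi = a$ on $\mathcal{J}_k$), and then estimates the multinomial coefficient $\lvert \mathcal{J}_k\rvert$ via Stirling to get $P(\gamma_A(q),q)\ge 1$. You instead treat $\theta$ as a probability vector, note that condition (1) says $\mathbb{E}_\theta[\log(c_i/d_i)]\ge 0$, and invoke LLN/CLT to bound $\nu^{\otimes k}\{c_{\textbf{\emph{i}}} \ge d_{\textbf{\emph{i}}}\}$ below by a $k$-independent $c_0>0$; since $\psi^{\gamma_A(q),q}=a=\nu^{\otimes k}$ on that event, this gives $\Psi_k^{\gamma_A(q),q}\ge c_0$ and hence $P(\gamma_A(q),q)\ge 1$. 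Both arguments capture the same large-deviations-type phenomenon, but the probabilistic framing is cleaner bookkeeping and is notably more robust at the boundary $\mathbb{E}_\theta[\log(c_i/d_i)]=0$: there the paper's assertion that the frequency-$\lfloor\theta_ik\rfloor$ strings automatically satisfy $c_{\textbf{\emph{j}}}\ge d_{\textbf{\emph{j}}}$ can fail for some $k$ because of rounding (one can manufacture $\theta_i$ and $\log(c_i/d_i)$ with zero weighted mean where $\sum_i\lfloor\theta_ik\rfloor\log(c_i/d_i)<0$ for infinitely many $k$), and while the ensuing multiplicative error is a constant and the paper's proof is easily repaired, your version needs no repair since you only need the half-plane event to carry positive probability, which CLT gives uniformly. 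Your handling of the three subcases (positive mean, zero mean with positive variance, degenerate) is exactly the right case split.
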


We will prove Theorem \ref{closedform} in Section \ref{closedformproof}.  The final part concerning the case when either $c_i \geq d_i$ or $d_i \geq c_i$ for all $i \in \mathcal{I}$ was obtained in \cite[Theorem 2]{fengaffine} with the additional assumption that the rotational and reflectional parts of the maps were trivial.  Note that, provided
\[
\sum_{i \in \mathcal{I}} p_i \, \log \big( c_i/d_i\big) \neq  0,
\]
Theorem \ref{closedform} gives a precise formula for $\gamma(q)$ in a neighbourhood of $q=1$, because the expressions in (1) or (2) above are the negative of each other at $q=1$ and this condition guarantees that one of them is strictly greater than 0.  We can use Theorem \ref{closedform} to obtain more precise information about the differentiability of $\gamma$.

\begin{prop} \label{closedformdiff}
If $\mu$ is of separated type and $\tau_1$ and $\tau_2$ are differentiable at a point $q>0$ which is in an open interval where Theorem \ref{closedform} gives equality, then $\gamma$ is differentiable unless $q$ corresponds to a phase change in $\gamma$, i.e., $\gamma$ switches from $\gamma_A$ to $\gamma_B$ or vice versa and $\gamma_A'(q) \neq \gamma_B'(q)$ .
\\ \\
Moreover, if $c_i \geq d_i$ for all $i \in \mathcal{I}$ and $\tau_1$ is differentiable at $q>0$, then $\gamma$ is differentiable at $q$ with $\gamma'(q) = \gamma_A'(q)$, and if $d_i \geq c_i$ for all $i \in \mathcal{I}$ and $\tau_2$ is differentiable at $q>0$, then $\gamma$ is differentiable at $q$ with $\gamma'(q) = \gamma_B'(q)$.
\end{prop}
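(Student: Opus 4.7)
The plan is to combine two ingredients: Theorem \ref{closedform}, which (under the hypotheses) identifies $\gamma$ locally with either $\gamma_A$, $\gamma_B$, or their pointwise maximum or minimum, and Lemma \ref{closeddiffs2}, which yields differentiability of $\gamma_A$ at $q$ (respectively $\gamma_B$ at $q$) directly from that of $\tau_1$ (respectively $\tau_2$).

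Suppose first that on the open interval $I \ni q$ we are in the regime $\max\{\gamma_A,\gamma_B\} \leq \tau_1+\tau_2$, so that $\gamma \equiv \max\{\gamma_A,\gamma_B\}$ on $I$ by Theorem \ref{closedform}. Both $\gamma_A$ and $\gamma_B$ are continuous on $(0,\infty)$ (by implicit continuity in their defining equations together with continuity of $\tau_1$ and $\tau_2$) and both are differentiable at $q$ by Lemma \ref{closeddiffs2}. If $\gamma_A(q) \neq \gamma_B(q)$, continuity makes the strict inequality persist in a neighbourhood, so $\gamma$ coincides with one of them near $q$ and is differentiable there. If instead $\gamma_A(q) = \gamma_B(q)$, the first-order expansions of $\gamma_A$ and $\gamma_B$ at $q$ show that $\max\{\gamma_A,\gamma_B\}$ is differentiable at $q$ exactly when $\gamma_A'(q) = \gamma_B'(q)$; this is the standard elementary fact that a maximum of two germs differentiable at a common point where they agree fails to be differentiable precisely when the two one-sided derivatives disagree. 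In the remaining regime $\min\{\gamma_A,\gamma_B\} \geq \tau_1+\tau_2$ on $I$, the equality case of Theorem \ref{closedform} (which holds on $I$ by hypothesis) gives $\gamma \equiv \min\{\gamma_A,\gamma_B\}$ on $I$, and the identical argument with minima in place of maxima applies. This proves the main assertion: outside of a phase change where $\gamma_A'(q) \neq \gamma_B'(q)$, $\gamma$ is differentiable at $q$.

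For the moreover part, the final sentence of Theorem \ref{closedform} asserts that $c_i \geq d_i$ for every $i \in \mathcal{I}$ forces $\gamma \equiv \gamma_A$ on $[0,\infty)$ unconditionally, and symmetrically $d_i \geq c_i$ for every $i$ forces $\gamma \equiv \gamma_B$. Lemma \ref{closeddiffs2} then transfers differentiability, and the explicit closed-form derivative, directly from $\gamma_A$ (respectively $\gamma_B$) to $\gamma$.

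The only substantive point is the elementary calculus lemma about maxima and minima of functions differentiable at a single point; it is this fact that cleanly isolates the phase-change scenario $\gamma_A'(q) \neq \gamma_B'(q)$ as the unique obstruction to differentiability, so no deeper analytic input is required.
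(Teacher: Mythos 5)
Your argument is correct and is essentially the paper's intended proof; the paper records it in one line ("follows immediately from Theorem~\ref{closedform} and Lemma~\ref{closeddiffs2}"), and you supply the elementary fact about the pointwise $\max$ or $\min$ of two functions differentiable at a point where they agree, which is exactly what isolates the phase-change scenario as the only obstruction. One small point worth noticing: the two regimes of Theorem~\ref{closedform} need not be constant across an open interval -- a regime change can occur at points where $\gamma_A(q_0)=\gamma_B(q_0)=\tau_1(q_0)+\tau_2(q_0)$ -- so your opening case split is not quite exhaustive; but computing one-sided derivatives of $\max$ on one side and $\min$ on the other shows they still coincide (with $\gamma$ in fact agreeing locally with a single one of $\gamma_A,\gamma_B$, so no phase change occurs there), and the conclusion is unaffected.
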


\begin{proof}
This follows immediately from Theorem \ref{closedform} and Lemma \ref{closeddiffs2}.
\end{proof}

In Section \ref{example2} we give an example of separated type for which the $L^q$-spectrum is not differentiable at one point due to a phase transition of the type described above.  Despite this we are able to prove differentiability at $q=1$, which is important for applications.
\begin{thm} \label{diffat1}
Let $\mu$ be of separated type and assume that $\tau_1$ and $\tau_2$ are differentiable at $q=1$.  Then $\gamma$ is differentiable at $q=1$ with
\[
\gamma'(1)= \left\{ \begin{array}{cc}
 \min \{\gamma_A'(1), \gamma_B'(1)\} &  \text{if $ \min \{\gamma_A'(1), \gamma_B'(1)\} \geq \tau'_1(1)+\tau'_2(1)$} \\ \\
 \max \{\gamma_A'(1), \gamma_B'(1)\} &  \text{if $ \max \{\gamma_A'(1), \gamma_B'(1)\}  \leq \tau'_1(1)+\tau'_2(1)$}
\end{array} \right. 
\]
\end{thm}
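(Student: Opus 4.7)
The plan is to use convexity of $\gamma$ together with Lemma \ref{notmiddle} and Theorem \ref{closedform} to squeeze the one-sided derivatives $\gamma'_\pm(1)$ from both sides. Set $\alpha = \gamma_A'(1)$, $\beta = \gamma_B'(1)$ and $\delta = \tau_1'(1)+\tau_2'(1)$, and, by the symmetry of the statement in $A$ and $B$, assume $\alpha \leq \beta$. The first observation is that $\gamma(1)=\gamma_A(1)=\gamma_B(1)=\tau_1(1)+\tau_2(1)=0$ --- the defining sums for $\gamma_A$ and $\gamma_B$ collapse to $\sum_i p_i=1$ at $q=1$ --- so the local picture near $q=1$ is controlled entirely by the slopes $\alpha, \beta, \delta$. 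The differentiability of $\gamma_A$ and $\gamma_B$ at $q=1$ that is needed for these slopes to exist is furnished by Lemma \ref{closeddiffs2}.

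Next I would extract the universal upper bound $\gamma(q) \leq \max\{\gamma_A(q), \gamma_B(q)\}$, valid for every $q \geq 0$: by Lemma \ref{notmiddle} each $q$ falls into one of the two regimes in Theorem \ref{closedform}, and in both of these one has $\gamma \leq \max\{\gamma_A, \gamma_B\}$ (equality in regime (i), and via the chain $\gamma \leq \min \leq \max$ in regime (ii)). Using the first-order expansions $\gamma_A(q) = \alpha(q-1)+o(q-1)$ and $\gamma_B(q) = \beta(q-1)+o(q-1)$ together with $\alpha \leq \beta$, one checks that $\max\{\gamma_A,\gamma_B\}(q)/(q-1) \to \beta$ as $q \to 1^+$ and $\to \alpha$ as $q \to 1^-$. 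Dividing the universal bound by $q-1$, taking one-sided limits (flipping the inequality on the $q<1$ side), and invoking convexity of $\gamma$ from Lemma \ref{gamma} then yields
\[
\alpha \,=\, \min\{\alpha,\beta\} \,\leq\, \gamma'_-(1) \,\leq\, \gamma'_+(1) \,\leq\, \max\{\alpha,\beta\} \,=\, \beta.
\]
In particular the degenerate configuration $\alpha = \beta$ is already handled, so in what follows I assume $\alpha < \beta$ and sharpen one side.

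In Case 1 of the theorem, where $\alpha \geq \delta$ (so $\beta > \alpha \geq \delta$), the task reduces to proving $\gamma'_+(1) \leq \alpha$. For this I would show that regime (ii) of Theorem \ref{closedform} is in force for $q > 1$ near $1$: since $\beta > \delta$, the first-order expansion yields $\gamma_B(q) > \tau_1(q)+\tau_2(q)$ for such $q$, whence $\max\{\gamma_A,\gamma_B\}(q) > \tau_1(q)+\tau_2(q)$, and Lemma \ref{notmiddle} then forces $\min\{\gamma_A,\gamma_B\}(q) \geq \tau_1(q)+\tau_2(q)$. Theorem \ref{closedform} therefore gives $\gamma(q) \leq \min\{\gamma_A(q),\gamma_B(q)\}$; since the minimum equals $\gamma_A(q)$ for $q > 1$ near $1$ (using $\alpha<\beta$ and $q-1>0$), dividing by $q-1 > 0$ and letting $q \to 1^+$ closes the squeeze $\gamma'_+(1) \leq \alpha$.

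Case 2, where $\beta \leq \delta$ (so $\alpha < \beta \leq \delta$), is handled by the mirror argument on the left of $1$: $\alpha < \delta$ forces $\gamma_A(q) > \tau_1(q)+\tau_2(q)$ for $q<1$ near $1$, which activates regime (ii) via Lemma \ref{notmiddle} and produces $\gamma(q) \leq \gamma_B(q)$; dividing by the negative quantity $q-1$ reverses the inequality and gives $\gamma'_-(1) \geq \beta$. The main subtlety I foresee throughout is the activation of regime (ii) when $\alpha$ or $\beta$ happens to coincide with $\delta$, since then the direct first-order comparison of $\gamma_A-(\tau_1+\tau_2)$ or $\gamma_B-(\tau_1+\tau_2)$ degenerates; this is resolved by using whichever of $\gamma_A,\gamma_B$ still has a strict slope comparison against $\tau_1+\tau_2$, which under the standing assumption $\alpha < \beta$ combined with the case hypothesis is always at least one of them.
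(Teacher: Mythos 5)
Your proof is correct and follows essentially the same route as the paper: all of the relevant functions vanish at $q=1$, so the tangent picture there reduces everything to the slope ordering, and Lemma \ref{notmiddle} together with Theorem \ref{closedform} pins $\gamma$ between $\gamma_A$, $\gamma_B$ and $\tau_1+\tau_2$ on each side of $1$, after which convexity of $\gamma$ closes the one-sided derivative from the remaining direction. The only (minor) difference worth noting is cosmetic: the paper identifies $\gamma(q)=\gamma_A(q)$ exactly on the regime-(i) side to get one one-sided derivative directly, whereas you derive two-sided bounds from the universal inequality $\gamma\leq\max\{\gamma_A,\gamma_B\}$ and then sharpen via regime (ii); your treatment of the boundary coincidences $\alpha=\delta$ or $\beta=\delta$ (falling back on whichever strict slope comparison survives) is a bit more explicit than the paper's, which implicitly relies on the same observation.
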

We will proof Theorem \ref{diffat1} in Section \ref{diffat1proof}.  Finally we present closed form expressions for the dimensions of $\mu$ and $F$ in the separated case.  Notably, we do not need the additional assumptions (1) or (2) made in Theorem \ref{closedform} to obtain the dimension results.
\begin{cor} \label{coro4}
Let $\mu$ be of separated type and assume it satisfies the ROSC.  Then
\[
\dim_{\text{\emph{B}}} F = \dim_{\text{\emph{P}}} F = \max\{\gamma_A(0),\gamma_B(0)\}.
\]
If, in addition, $\tau_1$ and $\tau_2$ are differentiable at $q=1$, then $\dim_{\text{\emph{H}}} \mu = \dim_{\text{\emph{P}}}  \mu = \dim_{\text{\emph{e}}} \mu = -\gamma'(1)$ which is equal to either $- \gamma_A'(1)$ or $-\gamma_B'(1)$ depending on the relative relationship with $\tau_1+\tau_2$, see Proposition \ref{diffat1}.  Both $- \gamma_A'(1)$ and $-\gamma_B'(1)$ have an explicit formula given by Lemma \ref{closeddiffs2}.
\\ \\
Moreover, if $c_i \geq d_i$ for all $i \in \mathcal{I}$, then $\dim_{\text{\emph{B}}} F = \dim_{\text{\emph{P}}} F =\gamma_A(0)$ and if $\tau_1$ is differentiable at $q=1$, then
\[
\dim_{\text{\emph{H}}} \mu  = \dim_{\text{\emph{P}}}  \mu = \dim_{\text{\emph{e}}} \mu= - \gamma_A'(1) = - \frac{\sum_{i \in \mathcal{I}} p_i \big( \log p_i + \tau_1'(1)\log( c_i/ d_i)\big) }{\sum_{i \in \mathcal{I}} p_i \log  d_i}.
\]
If $d_i \geq c_i$ for all $i \in \mathcal{I}$, then $\dim_{\text{\emph{B}}} F = \dim_{\text{\emph{P}}} F =\gamma_B(0)$ and if $\tau_2$ is differentiable at $q=1$, then
\[
\dim_{\text{\emph{H}}} \mu  = \dim_{\text{\emph{P}}}  \mu = \dim_{\text{\emph{e}}}  \mu = - \gamma_B'(1) = - \frac{\sum_{i \in \mathcal{I}} p_i \big(\log p_i+ \tau_2'(1)\log(  d_i/ c_i)\big) }{\sum_{i \in \mathcal{I}} p_i \log  c_i}.
\]
\end{cor}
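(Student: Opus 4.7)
The proof is a systematic assembly of the results of Sections \ref{gammadefsection} and \ref{closedformsection}, mediated by Theorem \ref{main} and Proposition \ref{dimapps}.

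For the box- and packing-dimension claims, I would first apply Theorem \ref{main}(3) under the ROSC to obtain $\tau_\mu(q) = \gamma(q)$ for all $q \geq 0$. Specialising to $q = 0$, Proposition \ref{dimapps} yields $\dim_{\text{B}} F = \gamma(0)$, and Theorem \ref{main}(3) gives in addition $T_{\P,\mu}(0) = \gamma(0)$, which coincides with $\dim_\P F$ (as in the corresponding $q = 0$ reasoning in \cite{me_box}). It then remains to identify $\gamma(0)$ with $\max\{\gamma_A(0),\gamma_B(0)\}$, and here I would invoke Theorem \ref{closedform} together with Lemma \ref{notmiddle}: at $q = 0$ the two regimes $\max\{\gamma_A(0),\gamma_B(0)\} \leq \tau_1(0) + \tau_2(0)$ and $\min\{\gamma_A(0),\gamma_B(0)\} \geq \tau_1(0)+\tau_2(0)$ exhaust all possibilities. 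In the first regime Theorem \ref{closedform} gives $\gamma(0) = \max\{\gamma_A(0),\gamma_B(0)\}$ directly; in the second regime the projection inequality $\dim_{\text{B}} F \leq \tau_1(0) + \tau_2(0)$ forces $\gamma(0) \leq \tau_1(0)+\tau_2(0) \leq \min\{\gamma_A(0),\gamma_B(0)\}$, and a short check of the sign conditions (1)/(2) of Theorem \ref{closedform} at $q = 0$ pins $\gamma_A(0), \gamma_B(0)$ and $\gamma(0)$ to the common value $\tau_1(0) + \tau_2(0)$, so the $\max$ formula still holds.

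For the Hausdorff-type dimension claim, assume $\tau_1$ and $\tau_2$ are differentiable at $q = 1$. Theorem \ref{diffat1} then delivers differentiability of $\gamma$ at $q = 1$ together with the explicit dichotomy displayed there. Combining this with Proposition \ref{dimapps} and Theorem \ref{main}(3) yields $\dim_\H \mu = \dim_\P \mu = \dim_{\text{e}} \mu = -\tau_\mu'(1) = -\gamma'(1)$, and the two candidate values $-\gamma_A'(1), -\gamma_B'(1)$ are computed by Lemma \ref{closeddiffs2} (noting that $\gamma_A(1) = \gamma_B(1) = 0$ simplifies the expressions).

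The special cases where $c_i \geq d_i$ for all $i$ (or $d_i \geq c_i$ for all $i$) do not require any regime check: the final clause of Theorem \ref{closedform} asserts the unconditional identity $\gamma \equiv \gamma_A$ (respectively $\gamma \equiv \gamma_B$) on $[0,\infty)$, so the box/packing-dimension and Hausdorff-dimension formulae specialise immediately, with the differentiated expressions again read off from Lemma \ref{closeddiffs2}. The main obstacle in the proof is the regime analysis at $q = 0$ described above; every other step is a direct substitution into the preceding theorems.
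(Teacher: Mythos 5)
Your overall structure is the same as the paper's: Theorem \ref{main}(3), Proposition \ref{dimapps}, Theorem \ref{closedform}, and (for differentiability at $q=1$) Theorem \ref{diffat1}, which the paper cites more tersely. The one place where you deviate is in identifying $\gamma(0)$ with $\max\{\gamma_A(0),\gamma_B(0)\}$: the paper simply asserts that $\max\{\gamma_A(0),\gamma_B(0)\}\le\tau_1(0)+\tau_2(0)$ always holds, so only the first regime of Theorem \ref{closedform} can occur, whereas you consider both regimes and argue via the projection inequality.

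Your regime-2 argument, however, has a gap. You write that \emph{a short check of the sign conditions (1)/(2) of Theorem \ref{closedform}} pins $\gamma_A(0),\gamma_B(0),\gamma(0)$ to $\tau_1(0)+\tau_2(0)$. At $q=0$ these two conditions are weighted by $c_i^{\tau_1(0)}d_i^{\gamma_A(0)-\tau_1(0)}$ and $d_i^{\tau_2(0)}c_i^{\gamma_B(0)-\tau_2(0)}$ respectively, and when $\gamma_A(0)$ and $\gamma_B(0)$ are both strictly above $\tau_1(0)+\tau_2(0)$ these are genuinely different weights, so the two expressions are not negatives of one another and there is no a priori reason for either to be $\ge 0$. (The observation in the paper that the two conditions are mutual negatives applies specifically at $q=1$, where both exponents collapse.) So invoking the sign conditions is not a valid step here. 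The correct way to close regime 2 is the multiplicativity observation that the paper records at the end of Section \ref{closedformsection}: $\psi^{\tau_1(q)+\tau_2(q),q}(\textbf{\emph{i}})=p_{\textbf{\emph{i}}}^q c_{\textbf{\emph{i}}}^{\tau_1(q)}d_{\textbf{\emph{i}}}^{\tau_2(q)}$, so $P(\tau_1(0)+\tau_2(0),0)=\Psi_1^{\tau_1(0)+\tau_2(0),0}=\sum_i c_i^{\tau_1(0)}d_i^{\tau_2(0)}$. In regime 2 this last sum is $\ge1$, so $\gamma(0)\ge\tau_1(0)+\tau_2(0)$; together with your projection inequality $\gamma(0)\le\tau_1(0)+\tau_2(0)$ this forces $\gamma(0)=\tau_1(0)+\tau_2(0)$, hence $P(\tau_1(0)+\tau_2(0),0)=1$, i.e. $\sum_i c_i^{\tau_1(0)}d_i^{\tau_2(0)}=1$, which by the defining equations gives $\gamma_A(0)=\gamma_B(0)=\tau_1(0)+\tau_2(0)$; so regime 2 can only occur in the degenerate boundary case, the $\max$ formula holds, and in fact the paper's blanket assertion is vindicated. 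Once you replace the sign-condition step with this multiplicativity argument the proof is complete. Everything else in your write-up (the differentiability claim via Theorem \ref{diffat1}, the identification $T_{\P,\mu}(0)=\overline{\dim}_{\text{B}}F$ and the equality $\dim_{\text{B}}F=\dim_{\text{P}}F$ from \cite{me_box}, and the specialisation of Lemma \ref{closeddiffs2} at $q=1$ using $\gamma_A(1)=\gamma_B(1)=0$) is correct and matches the paper.
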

\begin{proof}
This follows from Theorem \ref{closedform} and Proposition \ref{dimapps}.  For the box dimension result it is easily seen that $\max\{\gamma_A(0), \gamma_B(0)\} \leq \tau_1(0) + \tau_2(0)$ which guarantees that the dimension is given by the maximum and not the minimum.
\end{proof}
The final part of Corollary \ref{coro4} concerning the case when either $c_i \geq d_i$ or $d_i \geq c_i$ for all $i \in \mathcal{I}$ was obtained in \cite[Theorem 2]{fengaffine} with the additional assumption that the rotational and reflectional parts of the maps were trivial.  Slightly weaker versions of the box dimension result were obtained in \cite{me_box, baranski}.
\\ \\
Theorem \ref{closedform} only provides upper bounds in the case where $\min\{\gamma_A(q), \gamma_B(q)\} \geq \tau_1(q)+\tau_2(q)$ and neither (1) nor (2) are satisfied.  Initially we believed that this situation was vacuous, but eventually we were able to find an example where this occurred for a small range of values of $q$, although our numerical estimations still suggested that $\gamma(q) = \min\{\gamma_A(q), \gamma_B(q)\}$ in this range.
\begin{ques}
In the separated case, if $\min\{\gamma_A(q), \gamma_B(q)\} \geq \tau_1(q)+\tau_2(q)$ and neither (1) nor (2) are satisfied, is it still true that
\[
\gamma(q) = \min\{\gamma_A(q), \gamma_B(q)\}?
\]
\end{ques}
We conclude this section by noting that, even in this awkward setting, Theorem \ref{closedform} still provides useful computational information as
\[
\tau_1(q) +\tau_2(q) \leq \gamma_k(q) \leq \gamma(q) \leq \min\{\gamma_A(q), \gamma_B(q)\}
\]
for all $k \in \mathbb{N}$.  The fact that $\gamma_k(q)$ and thus $\gamma(q)$ are greater than or equal to $ \tau_1(q) +\tau_2(q)$ follows since
\[
\psi^{\tau_1(q) +\tau_2(q), q}(\textbf{\emph{i}}) \ = \ p_{\textbf{\emph{i}}}^q \,  c_{\textbf{\emph{i}}}^{\tau_1(q)} \,  d_{\textbf{\emph{i}}}^{\tau_2(q)}
\]
for each $\textbf{\emph{i}} \in \mathcal{I}^*$.

\subsection{On the differentiability of $\gamma$ in the non-separated case} \label{diffingeneral}

It follows immediately from convexity that $\gamma$ is differentiable on $(0,\infty)$ at all but countably many points and is semi-differentiable at every point.  However, identifying particular points where $\gamma$ is differentiable is awkward.  This is unfortunate as we are particularly interested in differentiability at $q=1$ due to the applications this has concerning the dimensions of $\mu$.
\\ \\
Fix $q>0$ and suppose that $\tau_1$ and $\tau_2$ are differentiable at $q$.  It follows easily that $\gamma_k$ is differentiable at $q$ for all $k \in \mathbb{N}$.  Moreover, implicit differentiation of $\Psi_k^{\gamma_k(q),q} = 1$ yields
\begin{equation} \label{diffeq1}
\sum_{\textbf{\emph{i}} \in \mathcal{I}^k} p(\textbf{\emph{i}})^q \, \alpha_1 (\textbf{\emph{i}})^{ \tau_\textbf{\emph{i}}(q)} \, \,  \alpha_2 (\textbf{\emph{i}})^ {\gamma_k(q)-\tau_\textbf{\emph{i}}(q)}\log  \Big( p(\textbf{\emph{i}}) \alpha_1 (\textbf{\emph{i}})^{\tau_{\textbf{\emph{i}}}'(q)} \alpha_2(\textbf{\emph{i}})^{\gamma_k'(q) -\tau_{\textbf{\emph{i}}}'(q)} \Big) = 0
\end{equation}
which upon solving for $\gamma_k'(q)$ gives
\begin{equation} \label{diffeq2}
\gamma_k'(q) = - \frac{\sum_{\textbf{\emph{i}} \in \mathcal{I}^k} p(\textbf{\emph{i}})^q \, \alpha_1 (\textbf{\emph{i}})^{ \tau_\textbf{\emph{i}}(q)} \, \,  \alpha_2 (\textbf{\emph{i}})^ {\gamma_k(q)-\tau_\textbf{\emph{i}}(q)}\log \Big( p(\textbf{\emph{i}}) \alpha_1 (\textbf{\emph{i}})^{\tau_{\textbf{\emph{i}}}'(q)}  \alpha_2(\textbf{\emph{i}})^{- \tau_{\textbf{\emph{i}}}'(q)} \Big)}{\sum_{\textbf{\emph{i}} \in \mathcal{I}^k} p(\textbf{\emph{i}})^q \, \alpha_1 (\textbf{\emph{i}})^{ \tau_\textbf{\emph{i}}(q)} \, \,  \alpha_2 (\textbf{\emph{i}})^ {\gamma_k(q)-\tau_\textbf{\emph{i}}(q)}\log \alpha_2 (\textbf{\emph{i}})}.
\end{equation}

One might hope to find situations where the sequence $ \gamma_k'(q)$ converges and $\gamma'(q) = \lim_{k \to \infty} \gamma_k'(q)$, but this is difficult to establish and indeed is not always true by virtue of the example in Section \ref{example2}.   We can prove that the $ \gamma_k'(q)$ converge in certain restricted circumstances (including $q=1$) using subadditivity. The expression (\ref{diffeq1}) demands further attention.  For $\textbf{\emph{i}} \in \mathcal{I}^*$ and $q>0$ define an alternative singular value function by
\[
\hat \psi^{s,q}(\textbf{\emph{i}}) = p(\textbf{\emph{i}}) \alpha_1 (\textbf{\emph{i}})^{\tau_{\textbf{\emph{i}}}'(q)} \alpha_2(\textbf{\emph{i}})^{s -\tau_{\textbf{\emph{i}}}'(q)}
\]
and let
\[
\hat \Psi_k^{s,q} = \sum_{\textbf{\emph{i}} \in \mathcal{I}^k}  \psi^{\gamma_k(q),q}(\textbf{\emph{i}}) \log \hat \psi^{s,q}(\textbf{\emph{i}}).
\]
Observe that the expression $\hat \Psi_k^{\gamma'_k(q),q} = 0$ recovers (\ref{diffeq1}).

\begin{lma}[multiplicativity and additivity properties] \label{additivediff}
Let $s \in \mathbb{R}$, $\textbf{i}, \textbf{j} \in \mathcal{I}^*$ and $k,l \in \mathbb{N}$.
\begin{itemize}
\item[(1)] If $s \leq \tau'_1(q) + \tau'_2(q)$, then
\[ 
\hat \psi^{s,q}(\textbf{i} \textbf{j}) \leq\hat \psi^{s,q}(\textbf{i}) \hat \psi^{s,q}(\textbf{j})
\]
\item[(2)]   If $s \geq \tau'_1(q) + \tau'_2(q)$, then
\[ 
\hat \psi^{s,q}(\textbf{i} \textbf{j}) \geq\hat \psi^{s,q}(\textbf{i}) \hat \psi^{s,q}(\textbf{j})
\]
\item[(3)] If $\gamma(q)  = \tau_1(q) + \tau_2(q)$ and $s \leq \tau'_1(q) + \tau'_2(q)$, then
\[ 
\hat \Psi_{k+l}^{s,q} \leq \hat \Psi_{k}^{s,q} + \hat \Psi_{l}^{s,q}
\]
\item[(4)]  If $\gamma(q) = \tau_1(q) + \tau_2(q)$ and $s \geq \tau'_1(q) + \tau'_2(q)$, then
\[ 
\hat \Psi_{k+l}^{s,q} \geq \hat \Psi_{k}^{s,q} + \hat \Psi_{l}^{s,q}
\]
\end{itemize}
\end{lma}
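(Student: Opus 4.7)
The plan is to establish the pointwise inequalities (1) and (2) first, and then derive the additive statements (3) and (4) from them by a ``Fubini plus multiplicativity'' argument that uses the hypothesis $\gamma(q) = \tau_1(q) + \tau_2(q)$ in an essential way.

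For parts (1) and (2), since $p(\textbf{i}\textbf{j}) = p(\textbf{i})\,p(\textbf{j})$ the probability prefactor is automatically multiplicative, and the problem reduces to comparing
\[
\alpha_1(\textbf{i}\textbf{j})^{\tau'_{\textbf{i}\textbf{j}}(q)}\, \alpha_2(\textbf{i}\textbf{j})^{s - \tau'_{\textbf{i}\textbf{j}}(q)} \quad\text{and}\quad \prod_{\textbf{k} \in \{\textbf{i}, \textbf{j}\}} \alpha_1(\textbf{k})^{\tau'_{\textbf{k}}(q)}\, \alpha_2(\textbf{k})^{s - \tau'_{\textbf{k}}(q)}.
\]
I would run the same case analysis used in Lemma \ref{additive}. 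For each pairing of $\textbf{i},\textbf{j}$ by membership in $\mathcal{I}_A$ or $\mathcal{I}_B$, the base and height of $S_{\textbf{i}\textbf{j}}([0,1]^2)$ are governed by explicit rules ($b(\textbf{i}\textbf{j}) = b(\textbf{i})b(\textbf{j})$ and $h(\textbf{i}\textbf{j}) = h(\textbf{i})h(\textbf{j})$ when $\textbf{i} \in \mathcal{I}_A$; $b(\textbf{i}\textbf{j}) = b(\textbf{i})h(\textbf{j})$ and $h(\textbf{i}\textbf{j}) = h(\textbf{i})b(\textbf{j})$ when $\textbf{i} \in \mathcal{I}_B$), and this also determines which of $\tau'_1(q),\tau'_2(q)$ equals $\tau'_{\textbf{i}\textbf{j}}(q)$. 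A short calculation in each case reduces the ratio of the two expressions above to something of the form $(b(\textbf{k})/h(\textbf{k}))^{\tau'_1(q) + \tau'_2(q) - s}$ or its reciprocal, for some $\textbf{k} \in \{\textbf{i},\textbf{j}\}$, and the case distinctions arrange that the base is $\geq 1$ or $\leq 1$ in exactly the way that pairs with the sign of the exponent coming from the hypothesis on $s$ to give the desired inequality. The argument is essentially a word-for-word replay of the proof of Lemma \ref{additive} a1)--a3), with the numbers $\tau_1(q),\tau_2(q)$ replaced throughout by the constants $\tau'_1(q),\tau'_2(q)$, which exist by our differentiability assumption on $\tau_1, \tau_2$ at $q$.

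For parts (3) and (4), the equality $\gamma(q) = \tau_1(q) + \tau_2(q)$ gives the problem a rigid structure. Indeed, Lemma \ref{additive} b2) says that $\Psi^{\gamma(q),q}_k$ is multiplicative in $k$, so $\Psi^{\gamma(q),q}_k = (\Psi^{\gamma(q),q}_1)^k$, and since $P(\gamma(q),q) = 1$ this forces $\Psi^{\gamma(q),q}_1 = 1$. Hence $\gamma_k(q) = \gamma(q)$ and $\Psi^{\gamma(q),q}_k = 1$ for every $k$, and $\psi^{\gamma(q),q}$ is multiplicative on concatenations by Lemma \ref{additive} a2). Splitting each word of length $k+l$ uniquely as $\textbf{i}\textbf{j}$ with $\textbf{i} \in \mathcal{I}^k$ and $\textbf{j} \in \mathcal{I}^l$ now gives
\[
\hat\Psi^{s,q}_{k+l} \ = \ \sum_{\textbf{i} \in \mathcal{I}^k} \sum_{\textbf{j} \in \mathcal{I}^l} \psi^{\gamma(q),q}(\textbf{i})\,\psi^{\gamma(q),q}(\textbf{j})\,\log \hat\psi^{s,q}(\textbf{i}\textbf{j}),
\]
and substituting the logarithm of the inequality from (1) (respectively (2)) into the sum, then swapping the order of summation and invoking $\sum_{\textbf{i} \in \mathcal{I}^k} \psi^{\gamma(q),q}(\textbf{i}) = \sum_{\textbf{j} \in \mathcal{I}^l} \psi^{\gamma(q),q}(\textbf{j}) = 1$, yields $\hat\Psi^{s,q}_{k+l} \leq \hat\Psi^{s,q}_k + \hat\Psi^{s,q}_l$, and the reversed inequality under the hypothesis of (4).

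The main obstacle is the bookkeeping in (1) and (2): one has to check several sub-cases according to which of $\mathcal{I}_A,\mathcal{I}_B$ contain $\textbf{i}$ and $\textbf{j}$ and whether base or height dominates in each of $\textbf{i}, \textbf{j}, \textbf{i}\textbf{j}$, and to verify in each sub-case that the product of the relevant signs matches the direction prescribed by the hypothesis on $s$. Once (1) and (2) are in place, the derivations of (3) and (4) are short and essentially formal.
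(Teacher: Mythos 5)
Your proposal is correct and follows essentially the same route as the paper: parts (1)--(2) are handled by rerunning the case analysis of Lemma \ref{additive}(a) with $\tau_1,\tau_2$ replaced by $\tau_1',\tau_2'$, and for parts (3)--(4) the paper likewise uses $\gamma_k(q)=\gamma(q)$, the exact multiplicativity of $\psi^{\gamma(q),q}$ from Lemma \ref{additive}(a2), and the logarithm of the pointwise inequality from parts (1)--(2), then splits the double sum and invokes $\Psi_k^{\gamma(q),q}=1$. Your extra justification of why $\gamma_k(q)=\gamma(q)$ (via Lemma \ref{additive}(b2) and $P(\gamma(q),q)=1$) is a small but welcome addition to what the paper leaves implicit.
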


We will prove Lemma \ref{additivediff} in Section \ref{additivediffproof}.  Standard properties of subadditive sequences allow us to define a function $\hat P : \mathbb{R}  \times [0, \infty) \to \mathbb{R}$ by
\[
\hat P(s,q) = \lim_{k \to \infty} \tfrac{1}{k} \, \hat \Psi_k^{s,q}   = \left\{ \begin{array}{cc}
\inf_{k \in \mathbb{N}} \,  \tfrac{1}{k} \, \hat \Psi_k^{s,q} &  \text{if $s \leq \tau_1'(q)+\tau_2'(q)$}\\ \\
\sup_{k \in \mathbb{N}} \,   \tfrac{1}{k} \, \hat \Psi_k^{s,q}&  \text{if $s \geq \tau_1'(q)+\tau_2'(q)$}
\end{array} \right. 
\]
\begin{lma}[Properties of $\hat P$] \label{Phat}
Suppose that $\tau_1$ and $\tau_2$ are differentiable at $q$.
\begin{itemize}
\item[(1)]  For all $s,t \in \mathbb{R}$ we have
\[
\min\{ s \log \alpha_{\min},s \log \alpha_{\max} \}+ \hat P(t,q) \, \leq\,  \hat  P(s+t,q)\,  \leq \, \max\{ s \log \alpha_{\min},s \log \alpha_{\max} \}+ \hat P(t,q);
\]
\item[(2)] For each $q\geq 0$, $\hat P(s,q)$ is continuous in $s$;
\item[(3)] For each $q\geq 0$, $\hat P(s,q)$ is strictly decreasing in $s$;
\item[(4)] For each $q\geq 0$, there is a unique value $s \in\mathbb{R}$ such that $\hat P(s,q) = 0$;
\item[(5)] The unique value of $s \in\mathbb{R}$ given in (4) is equal to $\lim_{k \to \infty} \gamma_k'(q)$.
\end{itemize}
\end{lma}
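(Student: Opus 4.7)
My plan is to exploit the fact that $\log \hat \psi^{s,q}(\textbf{\emph{i}})$ is affine in $s$: explicitly,
\[
\log \hat \psi^{s,q}(\textbf{\emph{i}}) - \log \hat \psi^{t,q}(\textbf{\emph{i}}) \,=\, (s-t)\, \log \alpha_2(\textbf{\emph{i}}).
\]
Multiplying through by the positive weight $\psi^{\gamma_k(q),q}(\textbf{\emph{i}})$, whose sum over $\textbf{\emph{i}} \in \mathcal{I}^k$ is exactly $1$ by the definition of $\gamma_k$, and summing yields the exact identity
\[
\hat \Psi_k^{s,q} - \hat \Psi_k^{t,q} \,=\, (s-t)\, M_k(q), \qquad M_k(q) := \sum_{\textbf{\emph{i}} \in \mathcal{I}^k} \psi^{\gamma_k(q),q}(\textbf{\emph{i}}) \log \alpha_2(\textbf{\emph{i}}).
\]
Since $\alpha_{\min}^k \leq \alpha_2(\textbf{\emph{i}}) \leq \alpha_{\max}^k$ for every $\textbf{\emph{i}} \in \mathcal{I}^k$, the average $M_k(q)/k$ lies in the fixed interval $[\log \alpha_{\min}, \log \alpha_{\max}] \subset (-\infty,0)$. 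This affine behaviour of $s \mapsto \hat \Psi_k^{s,q}$, combined with the sub-/super-additive limit afforded by Lemma \ref{additivediff} and Fekete's lemma, is the engine of the entire lemma.

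For (1)--(4), dividing the displayed identity by $k$ and letting $k \to \infty$ yields (1) at once, since $M_k(q)/k$ remains in $[\log \alpha_{\min}, \log \alpha_{\max}]$ for every $k$. The same sandwich renders $\hat P(\cdot, q)$ Lipschitz with constant $\max\{|\log \alpha_{\min}|, |\log \alpha_{\max}|\}$, proving (2). Strict monotonicity (3) follows from the strict upper bound $s \log \alpha_{\max} < 0$ for $s>0$ (and symmetrically for $s<0$). As $s \to +\infty$ the sandwich in (1) forces $\hat P(s,q) \to -\infty$, and likewise $\hat P(s,q) \to +\infty$ as $s \to -\infty$, so (4) follows from continuity via the intermediate value theorem.

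For (5), implicit differentiation of the identity $\Psi_k^{\gamma_k(q),q} = 1$ defining $\gamma_k$ gives precisely $\hat \Psi_k^{\gamma_k'(q), q} = 0$; this is exactly equation \eqref{diffeq1}. Let $s_0 \in \mathbb{R}$ denote the unique zero of $\hat P(\cdot, q)$ given by (4). Applying the boxed identity above with $s = \gamma_k'(q)$ and $t = s_0$, and using $\hat \Psi_k^{\gamma_k'(q),q} = 0$, rearranges to
\[
\gamma_k'(q) - s_0 \,=\, -\,\frac{\hat \Psi_k^{s_0, q}/k}{M_k(q)/k}.
\]
The denominator is bounded away from $0$ since $\log \alpha_{\max} < 0$, while the numerator converges to $\hat P(s_0, q) = 0$ by the definition of $\hat P$; hence $\gamma_k'(q) \to s_0$, giving (5).

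The only substantive analytic input is the very existence of the limit defining $\hat P$, which rests on Lemma \ref{additivediff} and Fekete's lemma; every other assertion follows mechanically from the affine identity. The subtle point to keep an eye on is that parts (3)--(4) of Lemma \ref{additivediff} require $\gamma(q) = \tau_1(q) + \tau_2(q)$, so this hypothesis is tacit in the very construction of $\hat P$ used throughout.
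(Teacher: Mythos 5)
Your proof is correct and follows essentially the same approach as the paper: the paper's part (1) amounts to exactly your affine identity $\hat\Psi_k^{s,q}-\hat\Psi_k^{t,q}=(s-t)M_k(q)$ with the bound $M_k(q)/k\in[\log\alpha_{\min},\log\alpha_{\max}]$ (the paper pulls the factor $\max\{\alpha_{\min}^{ks},\alpha_{\max}^{ks}\}$ out of the logarithm rather than isolating $M_k$, but the computation is the same), and your quantitative argument for (5) simply spells out the paper's terse assertion that the zeros $\gamma_k'(q)$ of $\hat\Psi_k^{\cdot,q}$ must converge to the zero of $\hat P(\cdot,q)$, which indeed needs the uniform, sign-definite bound on the slope $M_k(q)/k$ to be rigorous. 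Your concluding remark is also apt: the existence of the limit defining $\hat P$ relies on Lemma \ref{additivediff} (3)--(4), which require $\gamma(q)=\tau_1(q)+\tau_2(q)$; this hypothesis is not stated in Lemma \ref{Phat} but is implicitly assumed, and the paper only invokes the lemma at $q=1$ where it holds automatically since $\gamma(1)=\tau_1(1)=\tau_2(1)=0$.
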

\begin{cor} \label{gammadiffq=1}
Suppose that $\tau_1$ and $\tau_2$ are differentiable at $1$ .  Then $\gamma_k'(1)$ exist for all $k$ and converge as $k \to \infty$.
\end{cor}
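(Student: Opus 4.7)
The plan is to derive the corollary as an essentially immediate consequence of Lemma \ref{Phat}, once the hypothesis feeding into Lemma \ref{additivediff} (3)--(4) has been verified at the specific point $q=1$.

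First I would confirm the easy part, namely existence of $\gamma_k'(1)$ for each fixed $k$. Since $\tau_1$ and $\tau_2$ are differentiable at $1$ and every $\tau_{\textbf{\emph{i}}}$ coincides with one of these two functions, implicit differentiation of the defining relation $\Psi_k^{\gamma_k(q),q}=1$ (producing formula (\ref{diffeq2})) shows that $\gamma_k$ is differentiable at $q=1$, with $\gamma_k'(1)$ given explicitly by the quotient on the right-hand side of (\ref{diffeq2}).

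The key observation is then the identity $\gamma(1) = \tau_1(1) + \tau_2(1)$. This holds trivially: Lemma \ref{gamma}(4) gives $\gamma(1)=0$, and the general fact that any $L^q$-spectrum vanishes at $q=1$ gives $\tau_1(1)=\tau_2(1)=0$. Consequently the standing hypothesis of Lemma \ref{additivediff}(3)--(4), namely $\gamma(q) = \tau_1(q) + \tau_2(q)$, is satisfied at $q=1$, so the sequence $\hat{\Psi}_k^{s,1}$ is subadditive in $k$ when $s\leq \tau_1'(1)+\tau_2'(1)$ and superadditive when $s\geq \tau_1'(1)+\tau_2'(1)$. This is precisely the structural input needed for Lemma \ref{Phat}.

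With that verification in hand, Lemma \ref{Phat} applies directly at $q=1$: part (4) yields a unique $s\in\mathbb{R}$ with $\hat{P}(s,1)=0$, and part (5) identifies this $s$ with $\lim_{k\to\infty}\gamma_k'(1)$, asserting in particular that the limit exists. There is no real obstacle: the analytical machinery was built into Lemmas \ref{additivediff} and \ref{Phat}, and the only thing special about $q=1$ is the automatic coincidence $\gamma(1)=\tau_1(1)+\tau_2(1)$. This is what distinguishes the corollary from an attempt to prove convergence at a general $q>0$, where the equality $\gamma(q)=\tau_1(q)+\tau_2(q)$ can fail and the convergence of $\gamma_k'(q)$ is genuinely delicate, as illustrated by the example in Section \ref{example2}.
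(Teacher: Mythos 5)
Your proposal is correct and is essentially the paper's own argument: the paper's proof is the single sentence that the corollary ``can be deduced since $\gamma(1)=\tau_1(1)=\tau_2(1)=0$'', which is precisely the observation you make to verify the hypothesis of Lemma~\ref{additivediff}(3)--(4) and thereby invoke Lemma~\ref{Phat}(4)--(5) at $q=1$. You merely spell out explicitly the implicit-differentiation step for existence of $\gamma_k'(1)$ and the route through the two lemmas, which is exactly what the paper leaves to the reader.
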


This Corollary can be deduced since $\gamma(1) = \tau_1(1) = \tau_2(1)= 0$.  In the situations where $\gamma_k'(q)$ converges, to prove that the limit is $\gamma'(q)$, a possible strategy would be to establish equicontinuity of the family $\{\gamma_k'(q)\}$ on some interval $I$ containing $q$.  This seems awkward primarily because it cannot be true for all intervals $I$, even where $\tau_1$ and $\tau_2$ are differentiable.  If this was true one could apply the Arzel\`a-Ascoli Theorem to extract a convergent subsequence which would converge uniformly and thus converge to $\gamma'(q)$, proving that $\gamma$ is differentiable on any open interval where $\tau_1$ and $\tau_2$ are differentiable, but this is false, see Section \ref{example2}.

\begin{ques}
Is it true that if $\tau_1$ and $\tau_2$ are differentiable in a neighbourhood of 1, then there exists a sub-neighbourhood where the family $\{\gamma_k'(q)\}$ is equicontinuous? 
\end{ques}

We can at least use the work in this section to estimate the left and right derivatives of $\gamma$, which has applications in estimating the Hausdorff dimension of $\mu$.

\begin{thm} \label{generaldiffest}
Suppose that $\tau_1$ and $\tau_2$ are differentiable at $1$.
\begin{itemize}
\item[(1)] If $\gamma_1'(1) \leq \tau_1'(1)+\tau_2'(1)$, then
\[
\lim_{k \to \infty} \gamma_k'(1) = \inf_{k \in \mathbb{N}} \gamma_k'(1) \geq \gamma'_+(1) \geq \gamma'_-(1).
\]
\item[(2)] If $\gamma_1'(1) \geq \tau_1'(1)+\tau_2'(1)$, then
\[
\lim_{k \to \infty} \gamma_k'(1) = \sup_{k \in \mathbb{N}} \gamma_k'(1) \leq \gamma'_-(1) \leq \gamma'_+(1).
\]
\end{itemize}
These estimates give obvious estimates for the Hausdorff dimension of $\mu$ since $- \gamma'_+(1) \leq \dim_\text{\emph{H}} \mu \leq - \gamma'_-(1)$.
\end{thm}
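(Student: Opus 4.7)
The plan is to work with $s_\star := \lim_{k\to\infty} \gamma_k'(1)$, which exists by Corollary \ref{gammadiffq=1} and is characterised by $\hat P(s_\star, 1) = 0$ through Lemma \ref{Phat}(5). The critical observation is that $\gamma(1) = 0 = \tau_1(1) + \tau_2(1)$, so the hypothesis of Lemma \ref{additivediff}(3)--(4) is met at $q = 1$: $\hat\Psi_k^{s,1}$ is subadditive in $k$ for $s \leq \tau_1'(1)+\tau_2'(1)$ and superadditive for $s \geq \tau_1'(1)+\tau_2'(1)$, which unlocks the inf/sup representation of $\hat P(\cdot,1)$ from Lemma \ref{Phat}.

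For case (1) I would first show $s_\star \leq \tau_1'(1)+\tau_2'(1)$. Otherwise, strict monotonicity and continuity of $\hat P(\cdot,1)$ give $\hat P(\tau_1'(1)+\tau_2'(1),\,1) > 0$; but at this boundary the sub- and superadditivity from Lemma \ref{additivediff} collapse to additivity, so $\hat P(s,1) = \hat\Psi_1^{s,1}$, forcing $\hat\Psi_1^{\tau_1'(1)+\tau_2'(1),\,1} > 0 = \hat\Psi_1^{\gamma_1'(1),\,1}$ and hence $\gamma_1'(1) > \tau_1'(1)+\tau_2'(1)$ by monotonicity in $s$, contradicting case (1). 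With $s_\star$ now in the subadditive range, writing $\hat P(s_\star, 1) = \inf_k \tfrac{1}{k}\hat\Psi_k^{s_\star,1} \leq \tfrac{1}{k}\hat\Psi_k^{s_\star,1}$ gives $\hat\Psi_k^{s_\star,1} \geq 0 = \hat\Psi_k^{\gamma_k'(1),\,1}$, and strict monotonicity of $\hat\Psi_k(\cdot,1)$ in $s$ yields $s_\star \leq \gamma_k'(1)$ for every $k$, so $\inf_k \gamma_k'(1) = s_\star$.

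The substantive step is $\gamma_+'(1) \leq s_\star$, which I would prove by a first-order Taylor expansion around $q = 1$. Since $\tau_{\textbf{\emph{i}}}(1+\epsilon) = \tau_{\textbf{\emph{i}}}'(1)\epsilon + o(\epsilon)$ for each $\textbf{\emph{i}}$, a direct computation gives, for each fixed $k$,
\[
\Psi_k^{a\epsilon,\,1+\epsilon} \;=\; 1 + \epsilon\, \hat\Psi_k^{a, 1} + o(\epsilon),
\]
with the error controlled uniformly over the finite set $\mathcal{I}^k$. Pick $a \in (s_\star, \tau_1'(1)+\tau_2'(1))$; strict monotonicity of $\hat P$ gives $\hat P(a,1) < 0$, and the infimum representation supplies some $k$ with $\hat\Psi_k^{a,1} < 0$, so $\Psi_k^{a\epsilon,1+\epsilon} < 1$ for $\epsilon > 0$ sufficiently small. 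Convexity of $\tau_1 + \tau_2$ at $q=1$ combined with $a < \tau_1'(1)+\tau_2'(1)$ forces $a\epsilon < \tau_1(1+\epsilon)+\tau_2(1+\epsilon)$, placing us in the submultiplicative regime of Lemma \ref{additive}(b1), so $P(a\epsilon, 1+\epsilon) \leq (\Psi_k^{a\epsilon,1+\epsilon})^{1/k} < 1$. Strict monotonicity of $P$ in $s$ (Lemma \ref{P}(3)) then gives $\gamma(1+\epsilon) < a\epsilon$, hence $\gamma_+'(1) \leq a$; letting $a \to s_\star^+$ finishes the bound. The inequality $\gamma_-'(1) \leq \gamma_+'(1)$ is automatic from convexity of $\gamma$, and case (2) is proved by an entirely symmetric argument using the superadditive range and $a \in (\tau_1'(1)+\tau_2'(1), s_\star)$, with $\epsilon > 0$ for $\gamma_+'(1)$ and $\epsilon < 0$ for $\gamma_-'(1)$ (the sign flip of $\epsilon$ combined with the sign flip of $\hat\Psi_k^{a,1}$ keeps $\Psi_k^{a\epsilon,1+\epsilon} < 1$).

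The main obstacle will be the degenerate boundary case $s_\star = \tau_1'(1)+\tau_2'(1)$, where the open interval from which $a$ is drawn collapses and the Taylor argument loses its submultiplicative anchor. One natural workaround is to push $a$ into the superadditive range, where $\hat P(a,1) < 0$ forces $\hat\Psi_k^{a,1} < 0$ for \emph{every} $k$; however the corresponding bound $P \geq \Psi_k^{1/k}$ then points the wrong way, so one must pair this with the general convex-analytic fact that $\limsup_k \gamma_k'(1) \leq \gamma_+'(1)$ for pointwise limits of convex functions, in order to pin down $s_\star = \gamma_+'(1)$ in this boundary regime.
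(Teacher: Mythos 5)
Your proof takes a genuinely different path from the paper's, and that route is worth comparing. The paper proceeds elementarily: from $\gamma_1'(1)\leq\tau_1'(1)+\tau_2'(1)$ it deduces $\gamma_1(q)\leq\tau_1(q)+\tau_2(q)$ on a right-neighbourhood $[1,1+\varepsilon)$, whence by submultiplicativity $\gamma(q)\leq\gamma_k(q)\leq\tau_1(q)+\tau_2(q)$ there; since all three functions vanish at $q=1$, taking right-hand derivatives gives $\gamma_k'(1)\geq\gamma'_+(1)$ for every $k$ in one line, with no Taylor expansion of $\Psi_k$ and no appeal to $\hat P$. Your route through the characterisation $\hat P(s_\star,1)=0$, the first-order expansion $\Psi_k^{a\varepsilon,1+\varepsilon}=1+\varepsilon\hat\Psi_k^{a,1}+o(\varepsilon)$ (which you compute correctly, using $\gamma_k(1)=\tau_{\textbf{\emph{i}}}(1)=0$), and the choice $a\in(s_\star,\tau_1'(1)+\tau_2'(1))$ is heavier machinery, but it makes the role of the subadditivity of $\hat\Psi_k$ fully explicit and it cleanly delivers the assertion $\lim_k\gamma_k'(1)=\inf_k\gamma_k'(1)$ (via $\hat\Psi_k^{s_\star,1}\geq0\Rightarrow s_\star\leq\gamma_k'(1)$), which the paper's proof leaves implicit. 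Away from the boundary your argument is complete and correct.

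The boundary case $s_\star=\tau_1'(1)+\tau_2'(1)$ is a genuine gap, and the repair you sketch does not close it. The convex-analytic fact $\limsup_k\gamma_k'(1)\leq\gamma'_+(1)$ is true, but together with its companion $\liminf_k\gamma_k'(1)\geq\gamma'_-(1)$ it holds \emph{unconditionally} for any pointwise-convergent sequence of convex functions differentiable at $1$ and agreeing with the limit there; in particular it holds in case (2) just as well, so it cannot discriminate between the two cases. It gives $s_\star\leq\gamma'_+(1)$, which is the \emph{opposite} of the inequality the theorem asserts in case (1). To ``pin down $s_\star=\gamma'_+(1)$'' one would still need the lower bound $s_\star\geq\gamma'_+(1)$ from somewhere, and this is precisely what the Taylor argument supplied away from the boundary and what you have not replaced. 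Your idea of pushing $a$ into the superadditive range fails for the reason you give yourself: $P\geq(\Psi_k)^{1/k}$ there, so $\Psi_k^{a\varepsilon,1+\varepsilon}<1$ tells you nothing about $P$. The cleanest fix is to fall back on a direct comparison of $\gamma_k$, $\gamma$ and $\tau_1+\tau_2$ on a one-sided interval, in the spirit of the paper's proof, after noting that in the boundary case $\gamma_k'(1)=s_\star$ for every $k$; as written, though, this part of your proposal is incomplete.
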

We will prove Theorem \ref{generaldiffest} in Section \ref{generaldiffestproof}.

\subsection{The problem of negative $q$}
\label{negativeq}

It is perhaps unsatisfying that we cannot compute the $L^q$-spectrum for negative $q$.  This is a common problem caused by the fact that the standard inequalities either go in the wrong direction for negative $q$ or cause one to lose too much information to be of use.  In \cite[Theorem 4.1.3 (iii)]{sponges}, Olsen solved this problem for Bedford-McMullen carpets by computing $T_{P,\mu}(q)$ for all $q \in \mathbb{R}$ assuming a strong separation condition.  Even in the self-similar setting, the calculations for negative $q$ can be tricky.  Riedi \cite{riedi} studied this problem and observed that the computationally convenient `grid' definition of the $L^q$-spectrum introduced by Falconer, see \cite[Chapter 17]{falconer}, was unsatisfactory for negative $q$ and depended on the orientation of the grid.  (We will make use of Falconer's definition in this paper, see Section \ref{prelimsection}.)  Riedi's elegant solution was to introduce a new `grid definition', where instead of taking the measure of a square, you take the measure of the square enlarged by a uniform factor.  This avoids the problem of getting undesirably high estimates for squares with very small mass.  We would look to use Riedi's definition in our setting.  The first problem would be to consider existence problems in the (graph-directed) self-similar setting. 

\begin{ques} \label{question1}
Can Theorem \ref{gdexists} be extended to include negative $q$?  Specifically, do the $L^q$-spectra of (graph-directed) self-similar measures exist for all $q \in \mathbb{R}$?
\end{ques}

If the answer is `yes', then we can proceed as in the positive $q$ setting.  We can define a moment scaling function $\gamma$ on the whole of $\mathbb{R}$ and it seems straightforward to prove that this function would be an upper bound for the $L^q$-spectrum for negative $q$.  The other direction would be very awkward and perhaps we would need to introduce a very restrictive separation condition along the lines of \cite{sponges}, however, this $\gamma$  would be the obvious candidate.

\begin{ques} \label{question2}
If $\mu$ satisfies the ROSC, then for $q<0$ is it true that
\[
T_{\text{\emph{C}}, \mu}(q) = T_{\P, \mu}(q) = \tau_\mu(q) =  \gamma(q)
\]
where $\gamma$ is the function alluded to above?
\end{ques}

We conjecture, somewhat tentatively, that the answers to questions \ref{question1}-\ref{question2} are both in the affirmative, but acknowledge that the techniques used in this paper are insufficient to tackle the problem.

\section{Examples}

\subsection{An example with nontrivial rotations} \label{example1}

In order to illustrate our results we present an example and compute the $L^q$-spectrum.  We compare our example with the corresponding situation where all rotational and reflectional components are taken to be trivial.  Let $S_1,S_2,S_3: [0,1]^2 \to [0,1]^2$ be the affine maps which take $[0,1]^2$ to the 3 shaded rectangles on the left hand part of Figure 1, starting with the rectangle in the top middle and rotating clockwise.  Furthermore, suppose that the linear parts have been composed with: reflection in the vertical axis (top middle); clockwise rotation by 90 degrees (bottom right); and clockwise rotation by 270 degrees (bottom left).  With this IFS associate the probabilities $(1/5, 4/25,16/25)$ and let $\mu$ be the associated box-like self-affine measure, and let $E$ be the support of $\mu$.  Also, let $\nu$ be the corresponding self-affine measure where all rotational and reflectional components are taken to be trivial, and let $F$ be the support.  The unit square has been divided up into columns of widths $1/4, 1/2, 1/4$ and rows of heights $1/2, 1/2$.

\begin{figure}[H] 
	\centering
	\includegraphics[width=160mm]{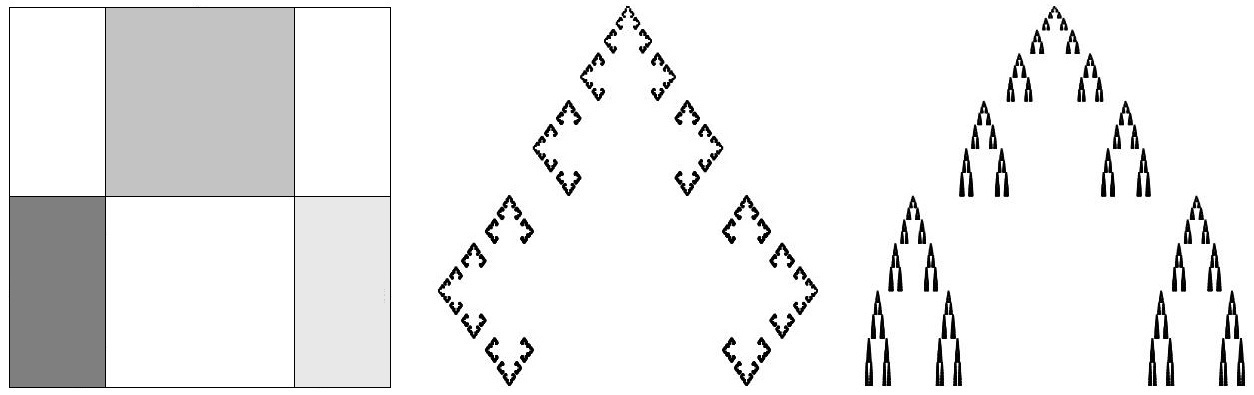}
\caption{The rectangles in the defining pattern for the above IFSs shaded according to measure (left); $E$, the self-affine support of $\mu$ (middle); and $F$, the self-affine support of $\nu$ (right). }
\end{figure}

Here, $\pi_1(\mu)$ and $\pi_2(\mu)$ are a pair of graph-directed self-similar measures defined by
\[
\pi_1(\mu)(A)= \tfrac{16}{25} \pi_2(\mu) (A/4)+\tfrac{1}{5} \pi_1(\mu) (-A/2+3/4)+\tfrac{4}{25} \pi_2(\mu) (A/4+3/4)
\]
and
\[
\pi_2(\mu)(A)= \tfrac{4}{5} \pi_1(\mu) (A/2)+\tfrac{1}{5} \pi_2(\mu) (A/2+1/2)
\]
for all Borel sets $A$.  Observe that the GDOSC is satisfied for this system and the associated weighted adjacency matrix is
\[
A^{(q,t)}=\left( \begin{array}{cc}
(\tfrac{1}{5})^q(\tfrac{1}{2})^t & (\tfrac{16}{25})^q(\tfrac{1}{4})^t+ (\tfrac{4}{25})^q(\tfrac{1}{4})^t\\ \\
(\tfrac{4}{5})^q(\tfrac{1}{2})^t  & (\tfrac{1}{5})^q(\tfrac{1}{2})^t  
 \end{array} \right).
\]
Define a function $\beta:\mathbb{R} \to \mathbb{R}$ by $\rho\big(A^{(\beta(q),q)}\big) = 1$ and a function $\gamma: \mathbb{R} \to \mathbb{R}$ by
\[
\lim_{k \to \infty} \bigg(\sum_{\textbf{\emph{i}} \in \mathcal{I}^k} \alpha_1(\textbf{\emph{i}})^{\beta(q)}\,  \alpha_2(\textbf{\emph{i}})^{\gamma(q)-\beta(q)}  \bigg)^{1/k} = 1.
\]
It follows from results in \cite{stric} that, since the measure separated open set condition holds, the $L^q$-spectra of $\pi_1(\mu)$ and $\pi_2(\mu)$ coincide with $\beta$ on $[0,\infty)$ and therefore Theorem \ref{main} gives that for all $q \in [0,\infty)$, $\tau_\mu(q) = \gamma(q)$.

\begin{figure}[H] 
	\centering
	\includegraphics[width=140mm]{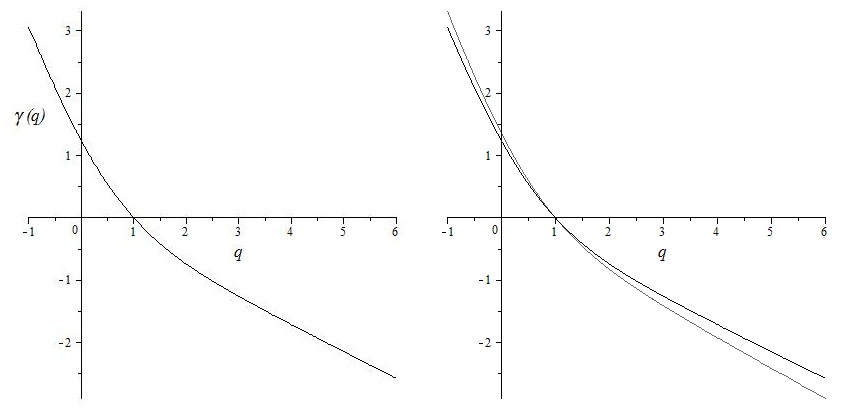}
\caption{Left: a graph of $\gamma(q)$. Right: for comparative purposes, graphs of the $L^q$-spectrum of $\mu$ (black) and $\nu$ (grey). }
\end{figure}
The parts of the above graphs corresponding to negative values of $q$ do not yet hold any geometric significance, but are simply our conjectured values for the $L^q$-spectrum in this range.  As discussed previously, the increasing parts of the Legendre transforms of the functions plotted above give upper bounds for the Hausdorff and packing multifractal spectra.
\begin{figure}[H] 
	\centering
	\includegraphics[width=145mm]{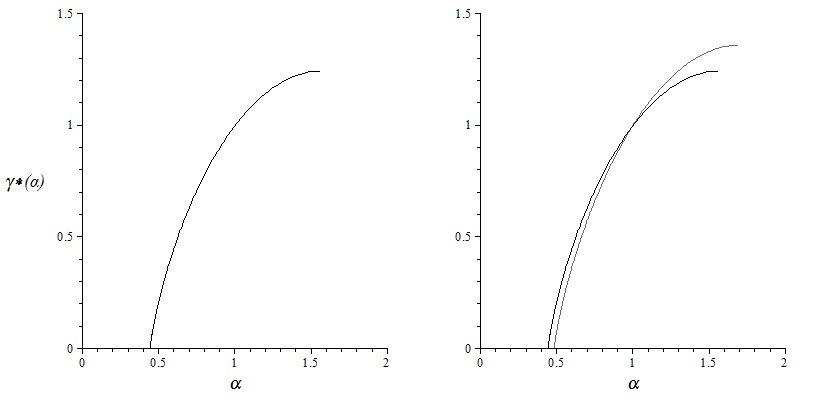}
\caption{Left: a graph of the increasing part of the Legendre spectrum of $\mu$, $\gamma^*(\alpha)$.  Right: for comparative purposes, graphs of the Legendre spectrum of $\mu$ (black) and $\nu$ (grey). }
\end{figure}
Finally, we compare the dimensions of $\mu$, $\nu$, $E$ and $F$.  Applying the results of Feng \cite{fengaffine} gives
\[
\dim_\text{P} F \, = \, 1.357018637, \qquad \text{and} \qquad \dim_\H  \nu \, = \, 1.042785026
\]
Estimating $\dim_\text{P} E$ from above, the 10th iterate gives
\[
\dim_\text{P} E \, \leq \,  \gamma_{10}(0) = 1.226824523 \, <  \, \dim_\text{P} F
\]
and, estimating $\dim_\H \mu$ from above using Theorem \ref{generaldiffest}, the 5th iterate gives
\[
\dim_\H \mu  \leq  -\gamma'_-(1) \leq - \gamma_5'(1) =   \,  0.9473061825 \, < \,  \dim_\H \nu.
\]
We conclude this section by remarking that the plots given here are only plots of \emph{approximations} to the desired functions.  However, we were able to graph high enough iterates such that the difference between successive estimates was indistinguishable to the naked eye in the given ranges of $q$.

\subsection{An example of a non-differentiable $L^q$-spectrum}  \label{example2}

In this section we present a simple example of a measure $\mu$ for which there is a point $q_0>0$ where the $L^q$-spectra are not differentiable.  This is in spite of the $L^q$-spectra of the projected measures, $\pi_1(\mu)$ and  $\pi_2(\mu)$, being differentiable for all $q >0$.  The self-affine measure falls into the class considered by Feng and Wang \cite{fengaffine} and is supported on a self-affine carpet of the type considered by Bara\'nski \cite{baranski}.  To the best of our knowledge this is the first example where the $L^q$-spectrum has been shown to be non-differentiable for a self-affine carpet.  In order to fully analyse this example we heavily rely on the fact that it is in the separated class and we thus have closed form expressions for all the quantities in question.
\\ \\
Let $\mu$ be the box-like self-affine carpet with support $F$ defined by the IFS depicted in the following figure.  All rotational and reflectional components are taken to be trivial.  The probabilities are taken to be $(3/5,1/5,1/5)$ and are indicated by shading as usual.  The unit square has been divided up into columns of widths $1/4, 1/2, 1/4$ and rows of heights $1/2, 3/10, 2/10$.

\begin{figure}[H] 
	\centering
	\includegraphics[width=120mm]{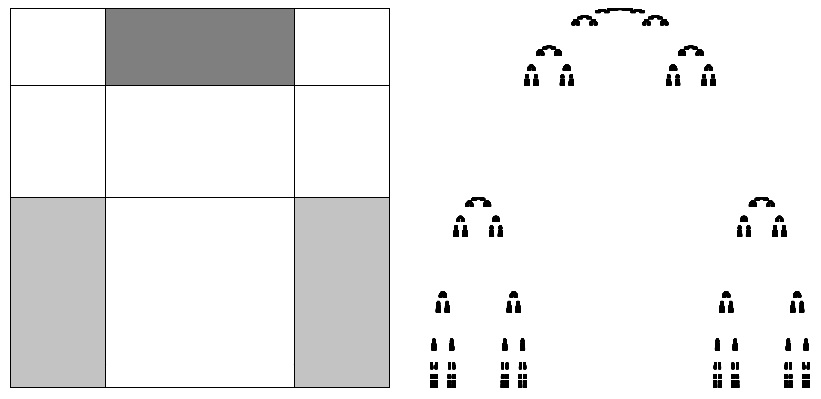}
\caption{Left: the defining pattern for the measure with the rectangles shaded according to mass. Right: the self-affine support. }
\end{figure}

A formula for the $L^q$-spectrum of $\mu$ is given in \cite[Theorem 1]{fengaffine}, but the formula is not a closed form expression and so computing it, plotting it, and analysing its differentiability are awkward.  The measures $\pi_1(\mu)$ and $\pi_2(\mu)$ are self-similar measures satisfying the measure separated open set condition and so their $L^q$-spectra can be computed using the standard closed formula and are differentiable for all $q \geq 0$.  Theorem \ref{closedform} and Theorem \ref{main} yield that the $L^q$-spectrum of $\mu$ is equal to either $\gamma_A$ or $\gamma_B$ depending on the relative relationship with $(\tau_1+\tau_2)$.  Thus we have a closed form expression for $\gamma(q) = \tau_\mu(q)$ and its derivative, where it exists.  It turns out that $\gamma$ has a phase transition at a point $q_0 \approx 0.237$, where it is not differentiable, but for all other values of $q \geq 0$ it is differentiable.  In fact, $\gamma(q) = \gamma_B(q)$ for $q \in [0,q_0]$ and $\gamma(q) = \gamma_A(q)$ for $q \in [q_0, -\infty)$ and the left derivative of $\gamma$ at $q_0$ is $\gamma_B'(q_0) = -1.160744186$ which is strictly less than the right hand derivative of $\gamma$ at $q_0$ which is $\gamma_B'(q_0) =  -1.010678931$.
\\ \\
We note that we are able to apply Theorem \ref{closedform} in its full strength because condition (1) is satisfied for all $q \geq 1$ and so in this region we get the \emph{equality} $\gamma(q) = \min\{\gamma_A(q), \gamma_B(q)\} = \gamma_A(q)$.

\begin{figure}[H] 
	\centering
	\includegraphics[width=140mm]{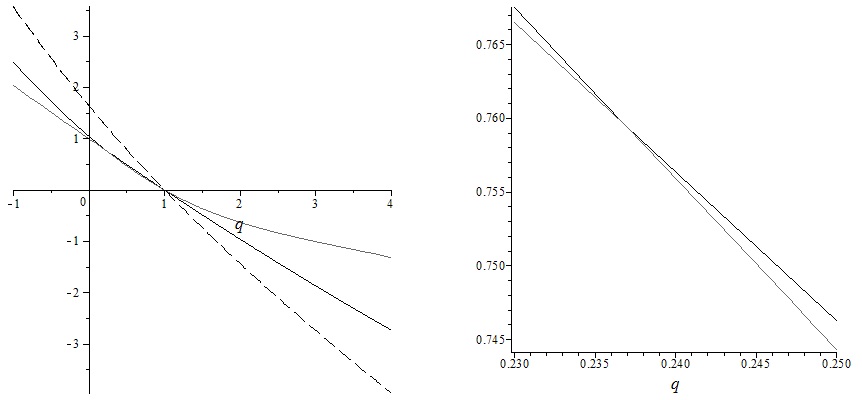}
\caption{Left: The graph of $\gamma$ (black), the graphs of the parts of $\gamma_A$ and $\gamma_B$ not equal to $\gamma$ (grey), and the graph of $(\tau_1+\tau_2)$ (dashed), which is included to indicate which of $\gamma_A, \, \gamma_B$ is equal to $\gamma$, i.e., the one `nearer' to  $(\tau_1+\tau_2)$.  Right: a magnification of the plot on the left to show the phase transition in better detail.}
\end{figure}

We can also easily compute the dimensions of $\mu$ and $F$ via closed form expressions:
\[
\dim_\text{B} F \, = \, \dim_\text{P} F \, = \, \gamma(0) \, = \, \gamma_B(0) \, = \,  1.046105401.
\]
and
\[
\dim_\H  \mu   \, =  \,  \dim_{\text{P}}  \mu \, =  \,  \dim_{\text{e}}  \mu  \, =  \,    -\gamma'(1)  \, =  \,  -\gamma_A'(1)  \, = \,  0.9792504246.
\]

\section{Preliminary results, notation and some inequalities} \label{prelimsection}

Let $\delta>0$ and let $\mathcal{D}_\delta$ be the set of closed cubes in a $\delta$ mesh imposed on $\mathbb{R}^d$ orientated at the origin.  Let
\[
D_\delta^q(\mu) = \sum_{Q \in \mathcal{ D}_\delta} \mu(Q)^q
\]
with the convention that $0^0 = 0$.  It turns out that for $q \geq 0$ the lower and upper $L^q$-spectrum of $\mu$ can be computed as follows
\[
\underline{\tau}_\mu(q) = \underline{\lim}_{\delta \to 0} \frac{\log D_\delta^q(\mu)}{- \log \delta}
\]
and
\[
\overline{\tau}_\mu(q) = \overline{\lim}_{\delta \to 0} \frac{\log D_\delta^q(\mu)}{-\log \delta}
\]
respectively, see \cite{falconer, exists}.  It is also worth noting that it is sufficient to only consider dyadic cubes, i.e. cubes with side lengths $2^{-n}$ with $n \to \infty$.  We will write $\hat D_n^q(\mu) = D_{2^{-n}}^q(\mu)$ and
$\mathcal{ \hat D}_n = \mathcal{D}_{2^{-n}}$ in an attempt to simplify notation when we use this fact.
\\ \\
We write $x \lesssim y$, if $x \leq C y$,  for some universal constant $C > 0$. Should we wish to emphasize that $C$ depends on some parameter $\theta$, we will write $x \lesssim_{\theta} y$.  If both $x \lesssim y$ and $x \gtrsim y$, then we will say $x$ and $y$ are \emph{comparable} and write $x \asymp y$.  We conclude this section with a simple lemma which is vital in the study of the $L^q$-spectrum.

\begin{lma} \label{inequality1}
Let $k \in \mathbb{N}$, $a_1, \dots, a_k \geq 0$ and $q\geq0$.  Then
\[
\Bigg( \sum_{i=1}^k  a_i \Bigg)^q \asymp_{k,q} \sum_{i=1}^k  a_i ^q.
\] 
\end{lma}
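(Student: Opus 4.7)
The plan is to establish the two-sided comparison by a case split on $q$ using only elementary properties of $t \mapsto t^q$ on $[0,\infty)$. First I would dispose of the edge case $q=0$: with the convention $0^0 = 0$ stated earlier, both sides vanish when every $a_i$ is zero, and otherwise $(\sum_i a_i)^0 = 1$ while $\sum_i a_i^0$ equals the number of nonzero terms, an integer in $[1,k]$; this gives comparability with constants depending only on $k$.

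For $q \geq 1$, convexity of $t \mapsto t^q$ is the only input required. The upper bound follows from the power-mean (equivalently Jensen) inequality
\[
\Bigl(\tfrac{1}{k}\sum_{i=1}^k a_i\Bigr)^q \leq \tfrac{1}{k}\sum_{i=1}^k a_i^q,
\]
which rearranges to $(\sum_i a_i)^q \leq k^{q-1}\sum_i a_i^q$. For the lower bound, note that each $a_i \leq \sum_j a_j$, so $a_i^q \leq a_i(\sum_j a_j)^{q-1}$ when $q \geq 1$; summing over $i$ yields $\sum_i a_i^q \leq (\sum_j a_j)^q$.

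For $0 < q < 1$, the function $t \mapsto t^q$ is concave and subadditive on $[0,\infty)$, so iterating subadditivity gives the upper bound $(\sum_i a_i)^q \leq \sum_i a_i^q$ immediately. For the reverse direction, use $\sum_i a_i \geq \max_i a_i$ and raise to the $q$-th power:
\[
\Bigl(\sum_{i=1}^k a_i\Bigr)^q \geq \bigl(\max_i a_i\bigr)^q = \max_i a_i^q \geq \tfrac{1}{k}\sum_{i=1}^k a_i^q.
\]

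There is no genuine obstacle here; the lemma is a routine consolidation of standard inequalities, and the only minor care required concerns the $q = 0$ convention. Combining the four bounds gives constants $c_{k,q},\,C_{k,q} > 0$ depending only on $k$ and $q$ such that $c_{k,q}\sum_i a_i^q \leq (\sum_i a_i)^q \leq C_{k,q}\sum_i a_i^q$, which is the claimed $\asymp_{k,q}$ comparability.
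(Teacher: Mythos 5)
Your proof is correct and follows essentially the same scheme as the paper's: split at $q=1$, use subadditivity/superadditivity of $t\mapsto t^q$ for the constant-free direction, and a Jensen-type estimate for the direction that introduces a constant depending on $k$ and $q$. The one small deviation is that for $0<q<1$ you replace the paper's appeal to Jensen's inequality for concave functions by the more elementary bound $(\sum_i a_i)^q \geq (\max_i a_i)^q \geq \tfrac{1}{k}\sum_i a_i^q$, which gives the same constant; your explicit treatment of the $q=0$ edge case under the convention $0^0=0$ is also a welcome bit of care the paper leaves implicit.
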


\begin{proof}
The proof follows immediately from variations of Jensen's inequality, keeping in mind the differences between the cases when $q \in [0,1)$ and $q>1$.  In particular, for $q \in [0,1]$ we trivially have
\[
\Bigg( \sum_{i=1}^k  a_i \Bigg)^q \leq  \ \sum_{i=1}^k  a_i ^q
\]
and, for $q \geq 1$,
\[
\Bigg( \sum_{i=1}^k  a_i \Bigg)^q \geq  \ \sum_{i=1}^k  a_i ^q.
\]
The other directions use Jensen's inequality for concave and convex functions respectively and introduce a constant depending on $k$ and $q$.
\end{proof}

The key reason the above lemma is important is that it allows us to freely move $q$ inside and outside expressions involving sums of measures.  If one wants to use the `difficult direction', namely the direction that requires Jensen's inequality, then one must be able to uniformly control the number $k$.  This is what the ROSC is used for in the subsequent proofs.  Also, the abject failure of the above lemma for negative $q$ goes a long way to explaining the problems in that setting.

\section{Proof of Theorem \ref{gdexists}} \label{gdproof}

In this section we will prove that the $L^q$-spectra exist for any family of graph-directed self-similar measures.  The proof follows \cite{exists} and could easily be extended to prove that $L^q$-spectra exist for any family of graph-directed self-\emph{conformal} measures but we only give the proof in the self-similar case in order to simplify exposition and focus on the key differences between the Peres-Solomyak argument and the graph-directed case.  Also, for the purposes of this paper, we only require the result in the self-similar setting.  We will prove that the $L^q$-spectrum exists for $\mu_1$ noting that the other arguments are symmetrical.  The fact that all the $L^q$-spectra coincide will follow from Lemma \ref{gdkeylem} bellow.
\\ \\
Let $\mathcal{E}_{1,j}^k$ denote all the paths of length $k$ in $\Gamma$ which start at the vertex $1$ and end on the vertex $j$ and let $\mathcal{E}_{1,j}^* = \bigcup_{k \in \mathbb{N}} \mathcal{E}_{1,j}^k$ and $\mathcal{E}_{1}^* = \bigcup_{j \in \mathcal{V}} \mathcal{E}_{1,j}^*$.  For $\textbf{\emph{e}} = (e_1, e_2, \dots, e_k) \in \mathcal{E}_{1}^*$ let
\[
S_\textbf{\emph{e}} = S_{e_1} \circ \cdots \circ S_{e_k}, \qquad c_\textbf{\emph{e}} = c_{e_1}  \cdots c_{e_k}, \qquad \text{and} \qquad  p_\textbf{\emph{e}} = p_{e_1}  \cdots p_{e_k}
\]
and let  $\overline{\textbf{\emph{e}}} = (e_1, e_2, \dots, e_{k-1}) \in \mathcal{E}_{1}^*$.  Consider the following $2^{-n}$ stopping
\[
{E}_{1,j}(n) = \{ \textbf{\emph{e}}  \in \mathcal{E}_{1,j}^* : c_{\textbf{\emph{e}}} \leq 2^{-n} < c_{\overline{\textbf{\emph{e}}}} \} 
\]
and observe that for all $n \in \mathbb{N}$
\begin{equation} \label{expformu}
\mu_1 = \sum_{j = 1}^{N} \sum_{\textbf{\emph{e}} \in \mathcal{E}_{1,j}(n)} p_\textbf{\emph{e}} \mu_j \circ S_\textbf{\emph{e}}^{-1}.
\end{equation}
We adopt the terminology used in \cite{exists} and say that a finite cover of $F_1$  by Borel sets $\{G_i\}_{i=1}^k$ is $(M, \varepsilon, N)$-good if $\lvert G_i \rvert \leq M \varepsilon$ for each $i$ and any cube of side length $\varepsilon$ intersects at most $N$ elements of the covering.  In particular, $(M, \varepsilon, N)$-good covers are efficient covers at scale $M\varepsilon$.

\begin{lma}[Lemma 2.2 in \cite{exists}] \label{basic1}
Let $j \in \mathcal{V}$ and $\{G_i\}_{i=1}^k$ be an $(M, 2^{-n}, N)$-good cover of $F_j$ and $q \geq 0$.  Then
\[
\hat D_n^q(\mu_j) \asymp \sum_{i=1}^k \mu_j(G_i)^q.
\]
\end{lma}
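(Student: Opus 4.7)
The plan is to compare two efficient covers of $F_j$ at scale $2^{-n}$: the dyadic cubes in $\mathcal{\hat D}_n$ that meet $F_j$, and the good cover $\{G_i\}_{i=1}^k$. In both directions the argument is purely geometric — counting incidences between dyadic cubes and good-cover elements — combined with Lemma \ref{inequality1}, which allows us to move a $q$-th power in and out of a sum of a uniformly bounded number of terms.

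First I would prove $\hat D_n^q(\mu_j) \lesssim \sum_{i=1}^k \mu_j(G_i)^q$. For any $Q \in \mathcal{\hat D}_n$ meeting $F_j$ we have $\mu_j(Q) \le \sum_{i : G_i \cap Q \neq \emptyset} \mu_j(G_i)$ since $\{G_i\}$ covers $F_j$. By the goodness hypothesis there are at most $N$ indices $i$ in this sum, so Lemma \ref{inequality1} gives $\mu_j(Q)^q \lesssim_{N,q} \sum_{i : G_i \cap Q \neq \emptyset} \mu_j(G_i)^q$. Summing over $Q \in \mathcal{\hat D}_n$ and swapping the order of summation, each $G_i$ is counted once for every dyadic cube of side $2^{-n}$ which it meets; since $\lvert G_i \rvert \le M\cdot 2^{-n}$, this count is bounded by a constant $C(M,d)$ depending only on $M$ and the ambient dimension. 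Hence $\hat D_n^q(\mu_j) \lesssim_{M,N,q,d} \sum_i \mu_j(G_i)^q$.

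For the converse, fix $i$ and let $\mathcal{Q}(i) = \{Q \in \mathcal{\hat D}_n : Q \cap G_i \neq \emptyset\}$. The diameter bound $\lvert G_i \rvert \le M\cdot 2^{-n}$ again forces $\lvert \mathcal{Q}(i) \rvert \le C(M,d)$, while $\mu_j(G_i) \le \sum_{Q \in \mathcal{Q}(i)} \mu_j(Q)$. Another application of Lemma \ref{inequality1} yields $\mu_j(G_i)^q \lesssim_{M,q,d} \sum_{Q \in \mathcal{Q}(i)} \mu_j(Q)^q$. Summing over $i$ and swapping again, each dyadic cube is counted at most $N$ times by the good-cover hypothesis, giving $\sum_i \mu_j(G_i)^q \lesssim_{M,N,q,d} \hat D_n^q(\mu_j)$.

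There is no serious obstacle — once the two incidence bounds are in place, the argument is mechanical. The only subtle point is that in each direction one invokes the \emph{non-trivial} direction of Lemma \ref{inequality1} (the one requiring Jensen's inequality), which is valid precisely because the number of summands is bounded uniformly in $n$; this is exactly what the $(M,2^{-n},N)$-goodness and the diameter bound were designed to provide. All absorbed constants depend on $M$, $N$, $q$, and the ambient dimension but not on $n$, so the comparability $\asymp$ holds at every scale as required.
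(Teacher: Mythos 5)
Your argument is correct and is essentially the standard double-counting proof; the paper itself does not reproduce a proof of this lemma but simply cites Lemma 2.2 of Peres and Solomyak \cite{exists}, where the same two-step incidence argument (bound each dyadic cube's mass by at most $N$ good-cover elements, bound each good-cover element's mass by at most $C(M,d)$ dyadic cubes, and move the $q$-th power in and out via the Jensen-type inequality) is used. You correctly identify that the non-trivial direction of the power inequality is available precisely because the incidence counts are bounded uniformly in $n$, which is the point of the $(M,2^{-n},N)$-goodness hypothesis.
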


\begin{lma} \label{basic2}
There exists $M>0$ and $N \in \mathbb{N}$ such that for any $m,n \in \mathbb{N}$ any $j \in \mathcal{V}$ and any $\textbf{\emph{e}} \in  {E}_{j,i}(n) $, the collection
\[
\{S_\textbf{\emph{e}}^{-1} ( Q \cap F_j) : Q \in \mathcal{ \hat D}_{m+n}, Q \cap S_\textbf{\emph{e}}(F_j) \neq \emptyset\}
\]
is an $(M, 2^{-m}, N)$-good covering of $F_i$.
\end{lma}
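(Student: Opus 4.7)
The plan is to unpack the statement and exploit the fact that $S_{\textbf{e}}$ is a similarity of ratio $c_{\textbf{e}}$ satisfying $c_{\min} 2^{-n} < c_{\textbf{e}} \leq 2^{-n}$, where $c_{\min} = \min_{e \in \mathcal{E}} c_e > 0$. This two-sided control on $c_{\textbf{e}}$ is what will convert a dyadic mesh at scale $2^{-(m+n)}$ into a covering at scale roughly $2^{-m}$ after pulling back by $S_{\textbf{e}}^{-1}$.

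First I would verify the covering property. Since $S_{\textbf{e}}$ is a similarity mapping $F_i$ into $F_j$, for every $x \in F_i$ the point $S_{\textbf{e}}(x)$ lies in $F_j$, hence in some $Q \in \mathcal{\hat D}_{m+n}$. That particular $Q$ then meets $S_{\textbf{e}}(F_i) \subseteq F_j$, so it is included in the collection, and $x \in S_{\textbf{e}}^{-1}(Q \cap F_j)$. Thus the indicated family covers $F_i$.

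Next I would bound the diameters. Each $Q \in \mathcal{\hat D}_{m+n}$ has diameter $\sqrt{d}\,2^{-(m+n)}$, and since $S_{\textbf{e}}^{-1}$ is a similarity of ratio $c_{\textbf{e}}^{-1} < c_{\min}^{-1} 2^n$, we get
\[
\bigl|S_{\textbf{e}}^{-1}(Q \cap F_j)\bigr| \leq c_{\textbf{e}}^{-1} \sqrt{d}\,2^{-(m+n)} < \sqrt{d}\,c_{\min}^{-1} \cdot 2^{-m},
\]
which yields the constant $M = \sqrt{d}\,c_{\min}^{-1}$, independent of $m$, $n$, $j$, $i$ and $\textbf{e}$.

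Finally, for the multiplicity bound, fix any cube $\Omega \subset \mathbb{R}^d$ of side $2^{-m}$. If $S_{\textbf{e}}^{-1}(Q \cap F_j) \cap \Omega \neq \emptyset$ then $Q \cap S_{\textbf{e}}(\Omega) \neq \emptyset$. Now $S_{\textbf{e}}(\Omega)$ is contained in some cube of side $c_{\textbf{e}}\,2^{-m} \leq 2^{-(m+n)}$, the side length of the dyadic cubes at level $m+n$. A set of this size meets at most $3^d$ elements of $\mathcal{\hat D}_{m+n}$, so we may take $N = 3^d$. The main (but routine) obstacle is simply keeping the two scales $2^{-m}$ and $2^{-(m+n)}$ straight and using both inequalities $c_{\textbf{e}} \leq 2^{-n}$ (for multiplicity) and $c_{\textbf{e}} > c_{\min}\,2^{-n}$ (for the diameter bound). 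Both constants $M$ and $N$ depend only on $d$ and the defining similarity ratios, proving the lemma.
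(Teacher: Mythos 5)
Your argument is correct in substance, and it in fact does more work than the paper, whose entire ``proof'' is the one-line remark that this is a graph-directed analogue of Lemma~2.4 of Peres--Solomyak and ``follows easily in the same way.'' The decisive observation --- that the stopping condition gives the two-sided bound $c_{\min}\,2^{-n} < c_{\textbf{e}} \leq 2^{-n}$, and that the lower bound controls the diameters while the upper bound controls the multiplicity after pulling back the level-$(m+n)$ dyadic mesh by $S_{\textbf{e}}^{-1}$ --- is exactly the mechanism behind the Peres--Solomyak lemma, and you have implemented it correctly. You also silently corrected what appears to be a typo in the paper's statement ($S_{\textbf{e}}(F_j)$ should read $S_{\textbf{e}}(F_i)$, since $\textbf{e}\in E_{j,i}(n)$ means $S_{\textbf{e}}$ maps $F_i$ into $F_j$), and your covering step uses the corrected version appropriately.

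One small imprecision in the multiplicity bound: you write that $S_{\textbf{e}}(\Omega)$ is contained in ``some cube of side $c_{\textbf{e}}2^{-m}\leq 2^{-(m+n)}$'' and then invoke the $3^d$ bound. But $S_{\textbf{e}}$ is a general similarity and may carry a rotation, so $S_{\textbf{e}}(\Omega)$ is a possibly rotated cube of side $\leq 2^{-(m+n)}$, hence of diameter up to $\sqrt{d}\,2^{-(m+n)}$; to bound how many axis-aligned dyadic cubes at scale $2^{-(m+n)}$ it meets, you should enclose it in an axis-aligned box of side $\sqrt{d}\,2^{-(m+n)}$, which gives $N = (\lceil\sqrt{d}\rceil+1)^d$ rather than $3^d$ once $d\geq 5$. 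This is purely cosmetic --- all that matters for the lemma is that $N$ is a universal constant depending only on $d$ (and the $c_e$'s, though here it only depends on $d$), which your argument does produce.
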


\begin{proof}
This is a graph directed version of Lemma 2.4 in \cite{exists} and the proof follows easily in the same way.
\end{proof}

\begin{lma} \label{basic3}
For any $q\geq 0$, any $j, i \in \mathcal{V}$ and any $\textbf{\emph{e}} \in  {E}_{j,i}(n) $,
\[
\hat D_m^q(\mu_i) \asymp \sum_{Q \in \mathcal{ \hat D}_{m+n}} \mu_i(S_\textbf{\emph{e}}^{-1}Q)^q.
\]
\end{lma}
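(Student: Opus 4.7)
The plan is to apply Lemma \ref{basic1} directly to a good cover of $F_i$ obtained by pulling back the dyadic grid $\mathcal{\hat D}_{m+n}$ under the similarity $S_{\mathbf{e}}^{-1}$. First I would extract the geometric content of the stopping condition: writing $c_{\min} = \min_{e \in \mathcal{E}} c_e \in (0,1)$, the relation $c_{\mathbf{e}} = c_{\overline{\mathbf{e}}}\, c_{e_k} \geq 2^{-n} c_{\min}$ together with $c_{\mathbf{e}} \leq 2^{-n}$ gives $c_{\mathbf{e}} \in [c_{\min} 2^{-n}, 2^{-n}]$. Consequently $S_{\mathbf{e}}^{-1}$ has similarity ratio $1/c_{\mathbf{e}}$ comparable to $2^n$, and the family $\{S_{\mathbf{e}}^{-1} Q : Q \in \mathcal{\hat D}_{m+n}\}$ is a rigid tiling of $\mathbb{R}^d$ by congruent (possibly rotated) cubes of common side length $2^{-(m+n)}/c_{\mathbf{e}} \in [2^{-m}, 2^{-m}/c_{\min}]$.

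Next I would verify that
\[
\mathcal{C} = \{S_{\mathbf{e}}^{-1} Q \cap F_i : Q \in \mathcal{\hat D}_{m+n},\ S_{\mathbf{e}}^{-1} Q \cap F_i \neq \emptyset\}
\]
is an $(M, 2^{-m}, N)$-good cover of $F_i$, with constants $M, N$ depending only on $c_{\min}$ and the ambient dimension $d$. The diameter bound is immediate from the side-length estimate: each element has diameter at most $\sqrt{d}\,2^{-m}/c_{\min}$. For the multiplicity bound, the tiles of $\{S_{\mathbf{e}}^{-1} Q\}$ are pairwise disjoint with side length at least $2^{-m}$, so a standard volumetric comparison shows that any axis-aligned $2^{-m}$-cube meets at most a uniformly bounded number of them. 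Crucially, both constants are independent of $m$, $n$, and $\mathbf{e}$.

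Applying Lemma \ref{basic1} to $\mathcal{C}$ yields
\[
\hat D_m^q(\mu_i) \asymp \sum_{G \in \mathcal{C}} \mu_i(G)^q.
\]
To finish, I would note that $\mu_i(S_{\mathbf{e}}^{-1} Q \cap F_i) = \mu_i(S_{\mathbf{e}}^{-1} Q)$ because $F_i$ is the support of $\mu_i$, and that any cube $Q$ with $S_{\mathbf{e}}^{-1} Q \cap F_i = \emptyset$ contributes nothing to the full sum on the right-hand side of the claim. These two observations turn the previous sum into $\sum_{Q \in \mathcal{\hat D}_{m+n}} \mu_i(S_{\mathbf{e}}^{-1} Q)^q$, which is exactly the desired comparability.

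There is no real obstacle here beyond careful bookkeeping: the only point requiring attention is uniformity of the implied constants in $\mathbf{e}$ and the stopping level $n$, which is precisely why all geometric estimates above are phrased solely in terms of $c_{\min}$ and $d$.
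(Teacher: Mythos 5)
Your argument is correct and takes essentially the same route as the paper: the paper's one-line proof cites Lemma~\ref{basic1} together with Lemma~\ref{basic2} (which records precisely that the pulled-back dyadic grid under $S_{\textbf{\emph{e}}}^{-1}$ is a uniformly $(M,2^{-m},N)$-good cover of $F_i$), and your steps deriving the side-length bound $2^{-(m+n)}/c_{\textbf{\emph{e}}}\in[2^{-m},2^{-m}/c_{\min})$ from the stopping condition and the volumetric multiplicity bound simply reconstruct Lemma~\ref{basic2} before applying Lemma~\ref{basic1}. The final bookkeeping --- identifying $\mu_i(S_{\textbf{\emph{e}}}^{-1}Q\cap F_i)$ with $\mu_i(S_{\textbf{\emph{e}}}^{-1}Q)$ via $\mathrm{supp}\,\mu_i = F_i$ and discarding cubes disjoint from $F_i$ --- is also exactly what is needed.
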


\begin{proof}
This follows immediately from Lemmas \ref{basic1}-\ref{basic2}.
\end{proof}

\begin{lma} \label{gdkeylem}
For any $q \geq 0$, $i,j \in \mathcal{V}$ and $m \in \mathbb{N}$, we have
\[
\hat D_m^q(\mu_i) \asymp \hat D_m^q(\mu_j).
\]
In particular, the ``$\asymp$'' does not depend on $m$.
\end{lma}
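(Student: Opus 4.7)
The plan is to exploit the strong connectedness of $\Gamma$ (standard in the graph-directed setting) to embed a scaled copy of each $\mu_i$ inside $\mu_j$, and then transfer this inclusion to the dyadic sums $\hat D_m^q$.

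First, for each ordered pair $(j,i) \in \mathcal{V}^2$, fix once and for all a path $\textbf{\emph{e}}^{(j,i)} \in \mathcal{E}_{j,i}^*$ in $\Gamma$ going from vertex $j$ to vertex $i$. Let $n_{j,i}$ be the unique integer such that $c_{\textbf{\emph{e}}^{(j,i)}} \leq 2^{-n_{j,i}} < c_{\overline{\textbf{\emph{e}}^{(j,i)}}}$, so that $\textbf{\emph{e}}^{(j,i)} \in \mathcal{E}_{j,i}(n_{j,i})$. Iterating the defining relation $\mu_j = \sum_k \sum_{e \in \mathcal{E}_{j,k}} p_e\, \mu_k \circ S_e^{-1}$ a total of $\lvert\textbf{\emph{e}}^{(j,i)}\rvert$ times yields an expression for $\mu_j$ as a sum over all paths of that length, in which $p_{\textbf{\emph{e}}^{(j,i)}}\, \mu_i \circ S_{\textbf{\emph{e}}^{(j,i)}}^{-1}$ appears as a single non-negative summand. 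Hence as measures
\[
\mu_j \;\geq\; p_{\textbf{\emph{e}}^{(j,i)}}\, \mu_i \circ S_{\textbf{\emph{e}}^{(j,i)}}^{-1}.
\]

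Next, apply Lemma \ref{basic3} with the path $\textbf{\emph{e}}^{(j,i)}$, which is legitimate precisely because $\textbf{\emph{e}}^{(j,i)} \in \mathcal{E}_{j,i}(n_{j,i})$. This gives
\[
\hat D_m^q(\mu_i) \;\asymp\; \sum_{Q \in \mathcal{\hat D}_{m+n_{j,i}}} \mu_i\bigl(S_{\textbf{\emph{e}}^{(j,i)}}^{-1}Q\bigr)^q \;\leq\; p_{\textbf{\emph{e}}^{(j,i)}}^{-q}\, \hat D_{m+n_{j,i}}^q(\mu_j),
\]
where the second inequality uses the pointwise bound above.

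The remaining step is the scale-shift estimate $\hat D_{m+n_{j,i}}^q(\mu_j) \asymp \hat D_m^q(\mu_j)$, with constants depending on $n_{j,i}$, $q$ and the ambient dimension $d$, but not on $m$. This follows from grouping cubes $Q' \in \mathcal{\hat D}_{m+n_{j,i}}$ inside their parent $Q \in \mathcal{\hat D}_m$ (at most $2^{n_{j,i} d}$ per parent) and applying Lemma \ref{inequality1} once per parent; the case $q \in [0,1]$ uses one direction of Lemma \ref{inequality1} together with concavity of $x \mapsto x^q$, while $q \geq 1$ uses the other direction together with convexity. Combining with the previous display gives $\hat D_m^q(\mu_i) \lesssim \hat D_m^q(\mu_j)$ with a constant depending only on $(i,j,q)$ and the system; swapping the roles of $i$ and $j$ (using the path $\textbf{\emph{e}}^{(i,j)}$) yields the reverse inequality.

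The main technical point to watch is the uniformity of the constants in $m$: everything collapses once one observes that $n_{j,i}$, $p_{\textbf{\emph{e}}^{(j,i)}}$, and the branching factor $2^{n_{j,i} d}$ are all independent of $m$. A minor subtlety is the measure of boundaries of dyadic cubes when splitting a parent cube into children, but this is harmless for $\asymp$-estimates since overlaps occur on $d$-dimensional Lebesgue-null sets and we are free to absorb constants. No part of the argument is genuinely delicate; the only reason Lemma \ref{gdkeylem} is not completely trivial is the need to iterate the measure equation enough times to walk from $j$ to $i$, which is precisely where connectedness of $\Gamma$ enters.
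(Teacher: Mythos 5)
Your proof is correct, and it reaches the same conclusion by a slightly different, more modular route than the paper. The paper does \emph{not} invoke Lemma \ref{basic3} here: it fixes $\textbf{\emph{e}}$, passes from $\hat D_m^q(\mu_i)$ directly to a sum over the deformed grid $S_\textbf{\emph{e}}^{-1}(\mathcal{\hat D}_m)$ (whose mesh is off by the bounded factor $c_\textbf{\emph{e}}^{-1}$ and possibly rotated), and then compares that grid to $\mathcal{\hat D}_m$ head-on via Lemma \ref{inequality1}, using only that each cube of one grid meets a bounded number of cubes of the other. You instead outsource the grid-alignment work to Lemma \ref{basic3} — which matches $\hat D_m^q(\mu_i)$ with a sum over $\mathcal{\hat D}_{m+n_{j,i}}$ hit with $S_\textbf{\emph{e}}^{-1}$ — and then absorb the generation mismatch by a standard dyadic scale-shift $\hat D_{m+n}^q \asymp_{n,q,d} \hat D_m^q$. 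Your route has two small advantages: it reuses an already-established lemma rather than re-deriving the grid comparison, and it is more careful about connectivity — the paper picks $\textbf{\emph{e}} \in \mathcal{E}_{i,j}$ as if there were always an edge from $i$ to $j$, whereas a strongly connected graph only guarantees a \emph{path}, which is what you use (the paper's argument goes through verbatim with a path, so this is cosmetic, but your formulation is strictly correct). Your scale-shift step is routine and your treatment of boundary overlap of closed dyadic cubes (each point lies in at most $2^d$ siblings) is exactly the right level of care.
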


\begin{proof}
  Let $p_{\min} = \min_{e \in \mathcal{E}} p_e$ and fix $\textbf{\emph{e}} \in \mathcal{E}_{i,j}$.  We have
\begin{eqnarray*}
\hat D_m^q(\mu_i)  &=& \sum_{Q \in \mathcal{ \hat D}_m} \mu_i(Q)^q \\ \\
&\geq& \sum_{\substack{Q \in \mathcal{ \hat D}_m : \\ Q \cap S_\textbf{\emph{e}}(F_j) \neq \emptyset}} p_\textbf{\emph{e}}^q\mu_j \big(S_\textbf{\emph{e}}^{-1}(Q)\big)^q \\ \\
&\geq& p_{\min}^q \sum_{Q' \in S_\textbf{\emph{e}}^{-1}(\mathcal{ \hat D}_m)} \mu_j (Q')^q \\ \\
&\gtrsim& p_{\min}^q \sum_{Q' \in S_\textbf{\emph{e}}^{-1}(\mathcal{ \hat D}_m)}  \Bigg(\sum_{\substack{Q \in \mathcal{ \hat D}_m: \\ Q \cap Q' \neq \emptyset}}  \mu_j (Q \cap Q')\Bigg)^q.
\end{eqnarray*}
Observe that $\lvert \{ Q \in \mathcal{ \hat D}_m:  Q \cap Q' \neq \emptyset \} \rvert $ is bounded above by a universal constant depending only on the map $S_\textbf{\emph{e}}$.  Applying Lemma \ref{inequality1}, rearranging the pieces and applying Lemma \ref{inequality1} again in the other direction yields
\begin{eqnarray*}
\sum_{Q' \in S_\textbf{\emph{e}}^{-1}(\mathcal{ \hat D}_m)}  \Bigg(\sum_{\substack{Q \in \mathcal{ \hat D}_m: \\ Q \cap Q' \neq \emptyset}}  \mu_j (Q \cap Q')\Bigg)^q &\gtrsim&  \sum_{Q' \in S_\textbf{\emph{e}}^{-1}(\mathcal{ \hat D}_m)}  \sum_{\substack{Q \in \mathcal{ \hat D}_m: \\ Q \cap Q' \neq \emptyset}}  \mu_j (Q \cap Q')^q \\ \\
&=&   \sum_{ Q \in \mathcal{ \hat D}_m}  \sum_{\substack{Q' \in S_\textbf{\emph{e}}^{-1}(\mathcal{ \hat D}_m): \\ Q \cap Q' \neq \emptyset}}  \mu_j (Q \cap Q')^q \\ \\
&\gtrsim&   \sum_{ Q \in \mathcal{ \hat D}_m}  \Bigg(\sum_{\substack{Q' \in S_\textbf{\emph{e}}^{-1}(\mathcal{ \hat D}_m): \\ Q \cap Q' \neq \emptyset}}  \mu_j (Q \cap Q') \Bigg)^q \\ \\
&\geq& \sum_{ Q \in \mathcal{ \hat D}_m}  \mu_j (Q)^q \\ \\
&=& \hat D_m^q(\mu_j) 
\end{eqnarray*}
which yields $\hat D_m^q(\mu_i)  \gtrsim \hat D_m^q(\mu_j)$.  The other direction is symmetrical and so we have proved the lemma. 
\end{proof}

We can now proceed with the proof of Theorem \ref{gdexists}.  The structure and strategy closely follows \cite{exists}.

\begin{proof}
For $Q \in \mathcal{ \hat D}_{m+n}$, let $\tilde Q$ denote the unique member of $D_n$ such that $Q \subseteq \tilde Q$.
\\ \\
\textbf{Case 1:} $q \geq 1$.  Let
\[
p_+(\tilde Q) =  \sum_{j=1}^N  \sum_{\substack{\textbf{\emph{e}} \in \mathcal{E}_{1,j}(n) :\\ S_\textbf{\emph{e}}(F_j) \cap \tilde Q \neq \emptyset}} p_\textbf{\emph{e}}.
\]
We have
\begin{eqnarray*}
\hat D_{m+n}^q(\mu_1) &=& \sum_{\tilde Q \in \mathcal{ \hat D}_n} \sum_{Q \subseteq \tilde Q} \mu_1(Q)^q \\ \\
&=& \sum_{\tilde Q \in \mathcal{ \hat D}_n} \sum_{Q \subseteq \tilde Q} \Bigg( \sum_{j = 1}^{N} \sum_{\substack{\textbf{\emph{e}} \in \mathcal{E}_{1,j}(n) \\ S_\textbf{\emph{e}}(F_j) \cap \tilde Q \neq \emptyset}} p_\textbf{\emph{e}} \mu_j ( S_\textbf{\emph{e}}^{-1} Q ) \Bigg)^q \qquad \qquad \text{by (\ref{expformu})}\\ \\
&=& \sum_{\tilde Q \in \mathcal{ \hat D}_n} \sum_{Q \subseteq \tilde Q} p_+(\tilde Q) ^q \Bigg( \sum_{j = 1}^{N} \sum_{\substack{\textbf{\emph{e}} \in \mathcal{E}_{1,j}(n) \\ S_\textbf{\emph{e}}(F_j) \cap \tilde Q \neq \emptyset}} \frac{p_\textbf{\emph{e}}}{p_+(\tilde Q) } \mu_j ( S_\textbf{\emph{e}}^{-1} Q ) \Bigg)^q \\ \\
&\leq&   \sum_{\tilde Q \in \mathcal{ \hat D}_n} \sum_{Q \subseteq \tilde Q}  p_+(\tilde Q) ^q \sum_{j = 1}^{N} \sum_{\substack{\textbf{\emph{e}} \in \mathcal{E}_{1,j}(n) \\ S_\textbf{\emph{e}}(F_j) \cap \tilde Q \neq \emptyset}} \frac{p_\textbf{\emph{e}}}{p_+(\tilde Q) }  \mu_j ( S_\textbf{\emph{e}}^{-1} Q )^q \\ \\
&\,& \qquad \qquad \qquad \qquad \qquad \qquad \text{by Jensen's inequality for convex functions} \\ \\
&\leq&  \sum_{\tilde Q \in \mathcal{ \hat D}_n} p_+(\tilde Q) ^{q-1}  \sum_{j = 1}^{N} \sum_{\substack{\textbf{\emph{e}} \in \mathcal{E}_{1,j}(n) \\ S_\textbf{\emph{e}}(F_j) \cap \tilde Q \neq \emptyset}} p_\textbf{\emph{e}} \sum_{Q \in \mathcal{ \hat D}_{m+n}}  \mu_j ( S_\textbf{\emph{e}}^{-1} Q )^q \\ \\
&\lesssim&   \sum_{\tilde Q \in \mathcal{ \hat D}_n} p_+(\tilde Q) ^{q-1}  \sum_{j = 1}^{N} \sum_{\substack{\textbf{\emph{e}} \in \mathcal{E}_{1,j}(n) \\ S_\textbf{\emph{e}}(F_j) \cap \tilde Q \neq \emptyset}} p_\textbf{\emph{e}} \hat D_m^q(\mu_j) \qquad \qquad \text{by Lemma \ref{basic3}}\\ \\
&\lesssim&  \hat D_m^q(\mu_1) \sum_{\tilde Q \in \mathcal{ \hat D}_n} p_+(\tilde Q) ^{q-1}  \sum_{j = 1}^{N} \sum_{\substack{\textbf{\emph{e}} \in \mathcal{E}_{1,j}(n) \\ S_\textbf{\emph{e}}(F_j) \cap \tilde Q \neq \emptyset}} p_\textbf{\emph{e}}  \qquad \qquad \text{by Lemma \ref{gdkeylem}}\\ \\
&=&  \hat D_m^q(\mu_1)  \sum_{\tilde Q \in \mathcal{ \hat D}_n} p_+(\tilde Q) ^{q} \qquad \qquad \text{by the definition of $p_+(\tilde Q)$}
\end{eqnarray*}
and as in \cite{exists} it is easy to see that
\[
\sum_{\tilde Q \in \mathcal{ \hat D}_n} p_+(\tilde Q) ^{q} \lesssim \hat D_n^q(\mu_1)
\]
which yields
\[
\hat D_{m+n}^q(\mu_1) \lesssim \hat D_m^q(\mu_1) \hat D_n^q(\mu_1)
\]
which is the desired submultiplicativity condition and proves that $\log \hat D_{n}^q(\mu_1) /n \log 2$ converges for $q \geq 1$.
\\ \\
\textbf{Case 2:} $q \in [0,1]$.  For $\textbf{\emph{e}} \in \mathcal{E}_{1,j}(n)$ and $\tilde Q \in D_n$ let
\[
\omega_q(\textbf{\emph{e}}, \tilde Q) = \sum_{Q \in \mathcal{ \hat D}_{m+n}: Q \subseteq \tilde Q} \mu_j\big((S_\textbf{\emph{e}}^{-1}(Q) \big)^q
\]
For $\textbf{\emph{e}} \in \mathcal{E}_{1,j}(n)$ choose a box $\tilde Q$ which maximises $\omega_q(\textbf{\emph{e}}, \tilde Q)$ and denote it by $\tilde Q(\textbf{\emph{e}})$.  Write
\[
p_-(\tilde Q) =  \sum_{j=1}^N  \sum_{\substack{\textbf{\emph{e}} \in \mathcal{E}_{1,j}(n) : \\ \tilde Q(\textbf{\emph{e}}) =  \tilde Q }} p_\textbf{\emph{e}}.
\]
We have
\begin{eqnarray*}
\hat D_{m+n}^q(\mu_1) &=& \sum_{\tilde Q \in \mathcal{ \hat D}_n} \sum_{Q \subseteq \tilde Q} \mu_1(Q)^q \\ \\
&\geq& \sum_{\tilde Q \in \mathcal{ \hat D}_n} \sum_{Q \subseteq \tilde Q} \Bigg( \sum_{j = 1}^{N}  \sum_{\substack{\textbf{\emph{e}} \in \mathcal{E}_{1,j}(n) : \\ \tilde Q(\textbf{\emph{e}}) =  \tilde Q }}  p_\textbf{\emph{e}} \mu_j ( S_\textbf{\emph{e}}^{-1} Q ) \Bigg)^q \qquad \qquad \text{by (\ref{expformu})}\\ \\
&=& \sum_{\tilde Q \in \mathcal{ \hat D}_n} \sum_{Q \subseteq \tilde Q} p_-(\tilde Q)  ^q \Bigg( \sum_{j = 1}^{N}  \sum_{\substack{\textbf{\emph{e}} \in \mathcal{E}_{1,j}(n) : \\ \tilde Q(\textbf{\emph{e}}) =  \tilde Q }}  \frac{p_\textbf{\emph{e}}}{p_-(\tilde Q)  } \mu_j ( S_\textbf{\emph{e}}^{-1} Q ) \Bigg)^q \\ \\
&\geq&   \sum_{\tilde Q \in \mathcal{ \hat D}_n} \sum_{Q \subseteq \tilde Q}  p_-(\tilde Q)  ^q \sum_{j = 1}^{N}  \sum_{\substack{\textbf{\emph{e}} \in \mathcal{E}_{1,j}(n) : \\ \tilde Q(\textbf{\emph{e}}) =  \tilde Q }}  \frac{p_\textbf{\emph{e}}}{p_-(\tilde Q)  }  \mu_j ( S_\textbf{\emph{e}}^{-1} Q )^q \\ \\
&\,& \qquad \qquad \qquad \qquad \qquad \qquad \text{by Jensen's inequality for concave functions} \\ \\
&\gtrsim&  \sum_{\tilde Q \in \mathcal{ \hat D}_n} p_-(\tilde Q)  ^{q-1}  \sum_{j = 1}^{N}  \sum_{\substack{\textbf{\emph{e}} \in \mathcal{E}_{1,j}(n) : \\ \tilde Q(\textbf{\emph{e}}) =  \tilde Q }}  p_\textbf{\emph{e}} \sum_{Q \in \mathcal{ \hat D}_{m+n}}  \mu_j ( S_\textbf{\emph{e}}^{-1} Q )^q \\ \\
&\,&  \qquad \qquad \qquad \qquad \text{since we can bound the number of $\tilde Q \in \mathcal{\hat D}_n$ intersecting $S_\textbf{\emph{e}}(F_j)$ }\\ \\
&\gtrsim&   \sum_{\tilde Q \in \mathcal{ \hat D}_n} p_-(\tilde Q)  ^{q-1}  \sum_{j = 1}^{N}  \sum_{\substack{\textbf{\emph{e}} \in \mathcal{E}_{1,j}(n) : \\ \tilde Q(\textbf{\emph{e}}) =  \tilde Q }}  p_\textbf{\emph{e}} \hat D_m^q(\mu_j) \qquad \qquad \text{by Lemma \ref{basic3}}\\ \\
&\gtrsim& \hat D_m^q(\mu_1)   \sum_{\tilde Q \in \mathcal{ \hat D}_n} p_-(\tilde Q)  ^{q-1}  \sum_{j = 1}^{N}  \sum_{\substack{\textbf{\emph{e}} \in \mathcal{E}_{1,j}(n) : \\ \tilde Q(\textbf{\emph{e}}) =  \tilde Q }}  p_\textbf{\emph{e}} \qquad \qquad \text{by Lemma \ref{gdkeylem}}\\ \\
&=&  \hat D_m^q(\mu_1)   \sum_{\tilde Q \in \mathcal{ \hat D}_n} p_-(\tilde Q)  ^{q} \qquad \qquad \text{by the definition of $p_-(\tilde Q) $}
\end{eqnarray*}
and as in \cite{exists} it is easy to see that
\[
\sum_{\tilde Q \in \mathcal{ \hat D}_n} p_-(\tilde Q)  ^{q} \gtrsim \hat D_n^q(\mu_1)
\]
which yields
\[
\hat D_{m+n}^q(\mu_1) \gtrsim \hat D_m^q(\mu_1) \hat D_n^q(\mu_1)
\]
which is the desired supermultiplicativity condition and proves that $\log \hat D_{n}^q(\mu_1) /n \log 2$ converges for $q \in [0,1]$. 
\end{proof}

\section{Proofs of preliminary lemmas} \label{prelim}

\subsection{Proof of Lemma \ref{additive}} \label{add}

\emph{Proof of (a).}  Fix $q \geq 0$.
\\ \\
a1) Let $s \in \big[-\infty, \tau_1(q)+\tau_2(q)\big)$ and let $\textbf{\emph{i}},\textbf{\emph{j}} \in \mathcal{I}^*$.  Firstly, assume that $\mu$ is of non-separated type and hence $\tau_1(q) = \tau_2(q) = :t(q)$.  We have
\begin{eqnarray*}
\psi^{s,q}(\textbf{\emph{i}}\textbf{\emph{j}}) &=& p( \textbf{\emph{i}} \textbf{\emph{j}})^q \, \alpha_1( \textbf{\emph{i}} \textbf{\emph{j}})^{t(q)}\alpha_2( \textbf{\emph{i}} \textbf{\emph{j}})^{s-t(q)}\\ \\
&=&  p( \textbf{\emph{i}})^q p(\textbf{\emph{j}})^q \, \Big(\alpha_1( \textbf{\emph{i}} \textbf{\emph{j}}) \, \alpha_2( \textbf{\emph{i}} \textbf{\emph{j}}) \Big)^{s-t(q)} \, \alpha_1( \textbf{\emph{i}} \textbf{\emph{j}})^{2 t(q)-s}\\ \\
&=& p( \textbf{\emph{i}})^q p(\textbf{\emph{j}})^q \,  \Big(\alpha_1( \textbf{\emph{i}}) \, \alpha_2( \textbf{\emph{i}} ) \alpha_1(  \textbf{\emph{j}}) \, \alpha_2(  \textbf{\emph{j}}) \Big)^{s-t(q)} \, \alpha_1( \textbf{\emph{i}} \textbf{\emph{j}})^{2 t(q)-s} \\ \\
&\leq& p( \textbf{\emph{i}})^q p(\textbf{\emph{j}})^q \,  \Big(\alpha_1( \textbf{\emph{i}}) \, \alpha_2( \textbf{\emph{i}} )  \Big)^{s-t(q)}\, \Big( \alpha_1(  \textbf{\emph{j}})\alpha_2(  \textbf{\emph{j}}) \Big)^{s-t(q)} \, \Big(\alpha_1( \textbf{\emph{i}}) \, \alpha_1( \textbf{\emph{j}})\Big)^{2 t(q)-s} \qquad \text{since $2t(q)-s>0$}\\ \\
&=& \psi^{s,q}(\textbf{\emph{i}}) \, \psi^{s,q}( \textbf{\emph{j}})
\end{eqnarray*}
proving (a1) in the non-separated case.  Secondly, assume that $\mu$ is of separated type and assume, in addition, that $b(\textbf{\emph{i}}) \geq h(\textbf{\emph{i}})$, recalling that $b(\textbf{\emph{i}})$ and $h(\textbf{\emph{i}})$ are the lengths of the base and height of the rectangle $S_{\textbf{\emph{i}}}([0,1]^2)$ respectively.  The case where $b(\textbf{\emph{i}}) < h(\textbf{\emph{i}})$ is analogous.  Since $\mu$ is of separated type, $b(\textbf{\emph{ij}}) = b(\textbf{\emph{i}})\,b(\textbf{\emph{j}})$ and $h(\textbf{\emph{ij}}) = h(\textbf{\emph{i}})\,h(\textbf{\emph{j}})$ and this precludes the case: $b(\textbf{\emph{j}}) \geq h(\textbf{\emph{j}})$ and $b(\textbf{\emph{ij}}) < h(\textbf{\emph{ij}})$. We are left with the following three cases:
\begin{itemize}
\item[(i)] $b(\textbf{\emph{j}}) \geq h(\textbf{\emph{j}})$ and $b(\textbf{\emph{ij}}) \geq h(\textbf{\emph{ij}})$;
\item[(ii)] $b(\textbf{\emph{j}}) < h(\textbf{\emph{j}})$ and $b(\textbf{\emph{ij}}) \geq h(\textbf{\emph{ij}})$;
\item[(iii)] $b(\textbf{\emph{j}}) < h(\textbf{\emph{j}})$ and $b(\textbf{\emph{ij}}) < h(\textbf{\emph{ij}})$;
\end{itemize}
and in each situation we will show that
\[
\frac{\psi^{s,q}({\textbf{\emph{i}}}{\textbf{\emph{j}}})}{ \psi^{s,q}(\textbf{\emph{i}})  \, \psi^{s,q}({\textbf{\emph{j}}})} \leq 1.
\]
\textbf{Case (i):}
\[
\frac{\psi^{s,q}({\textbf{\emph{i}}}{\textbf{\emph{j}}})}{ \psi^{s,q}(\textbf{\emph{i}})  \, \psi^{s,q}({\textbf{\emph{j}}})}  = \frac{p(\textbf{\emph{ij}})^{q} b(\textbf{\emph{ij}})^{\tau_1(q)} h(\textbf{\emph{ij}})^{s-\tau_1(q)}}{p(\textbf{\emph{i}})^{q}p(\textbf{\emph{j}})^{q}b(\textbf{\emph{i}})^{\tau_1(q)} h(\textbf{\emph{i}})^{s-\tau_1(q)} b(\textbf{\emph{j}})^{\tau_1(q)} h(\textbf{\emph{j}})^{s-\tau_1(q)}}
= 1.
\]
\textbf{Case (ii):} 
\[
\frac{\psi^{s,q}({\textbf{\emph{i}}} {\textbf{\emph{j}}})}{ \psi^{s,q}(\textbf{\emph{i}})  \, \psi^{s,q}({\textbf{\emph{j}}})}  = \frac{p(\textbf{\emph{ij}})^{q} b(\textbf{\emph{ij}})^{\tau_1(q)} h(\textbf{\emph{ij}})^{s-\tau_1(q)}}{p(\textbf{\emph{i}})^{q} p(\textbf{\emph{j}})^{q}b(\textbf{\emph{i}})^{\tau_1(q)} h(\textbf{\emph{i}})^{s-\tau_1(q)} h(\textbf{\emph{j}})^{\tau_2(q)} b(\textbf{\emph{j}})^{s-\tau_2(q)}}
= \bigg(\frac{ b(\textbf{\emph{j}})}{h(\textbf{\emph{j}})} \bigg)^{\tau_1(q)+\tau_2(q)-s} \leq 1.
\]
\textbf{Case (iii):}
\[
\frac{\psi^{s,q}({\textbf{\emph{i}}} {\textbf{\emph{j}}})}{ \psi^{s,q}(\textbf{\emph{i}})  \, \psi^{s,q}({\textbf{\emph{j}}})}  = \frac{p(\textbf{\emph{ij}})^{q} h(\textbf{\emph{ij}})^{\tau_2(q)} b(\textbf{\emph{ij}})^{s-\tau_2(q)}}{p(\textbf{\emph{i}})^{q} p(\textbf{\emph{j}})^{q} b(\textbf{\emph{i}})^{\tau_1(q)} h(\textbf{\emph{i}})^{s-\tau_1(q)} h(\textbf{\emph{j}})^{\tau_2(q)} b(\textbf{\emph{j}})^{s-\tau_2(q)}}
= \bigg(\frac{ h(\textbf{\emph{i}})}{b(\textbf{\emph{i}})} \bigg)^{\tau_1(q)+\tau_2(q)-s} \leq 1.
\]
The proofs of (a2) and (a3) are similar and, therefore, omitted.
\\ \\
\emph{Proof of (b).}  This is straightforward by noting that, for all $k, l \in \mathbb{N}$, we have
\[
\Psi_{k+l}^{s,q} = \sum_{\textbf{\emph{i}} \in \mathcal{I}^{k+l}} \psi^{s,q} ({\textbf{\emph{i}}}) = \sum_{\textbf{\emph{i}} \in \mathcal{I}^{k}}  \sum_{\textbf{\emph{j}} \in \mathcal{I}^{l}}\psi^{s,q} ({\textbf{\emph{i}}} {\textbf{\emph{j}}}) 
\]
and
\[
\Psi_{k}^{s,q}  \, \Psi_{l}^{s,q}  = \Bigg(\sum_{\textbf{\emph{i}} \in \mathcal{I}^{k}} \psi^{s,q} ({\textbf{\emph{i}}})\Bigg) \Bigg( \sum_{\textbf{\emph{i}} \in \mathcal{I}^{l}} \psi^{s,q} ({\textbf{\emph{j}}}) \Bigg) = \sum_{\textbf{\emph{i}} \in \mathcal{I}^{k}}  \sum_{\textbf{\emph{j}} \in \mathcal{I}^{l}}\psi^{s,q} ({\textbf{\emph{i}}})  \, \psi^{s,q} ({\textbf{\emph{j}}})
\]
and applying part (a). \hfill \qed

\subsection{Proof of Lemma \ref{P}} \label{Pproofs}

(1)  Let $s,t \in \mathbb{R}$, $\lambda>0$, $q \geq \lambda$ and $r \geq \lambda-q$.  We have
\begin{eqnarray*}
  P(s+t, q+r) &=& \lim_{k \to \infty} \Bigg(\sum_{\textbf{\emph{i}} \in \mathcal{I}^{k}} p(\textbf{\emph{i}})^{q+r} \, \bigg(\frac{\alpha_1 (\textbf{\emph{i}})}{\alpha_2 (\textbf{\emph{i}})}\bigg)^{ \tau_\textbf{\emph{i}}(q+r)} \, \,  \alpha_2 (\textbf{\emph{i}})^ {s+t} \Bigg)^{1/k} \\ \\
&\leq& \lim_{k \to \infty} \Bigg(\max \big\{p_{\min}^{rk},  p_{\max}^{rk} \big\} \, \max \big\{\alpha_{\min}^{sk},  \alpha_{\max}^{sk} \big\} \sum_{\textbf{\emph{i}} \in \mathcal{I}^{k}} p(\textbf{\emph{i}})^{q} \, \bigg(\frac{\alpha_1 (\textbf{\emph{i}})}{\alpha_2 (\textbf{\emph{i}})}\bigg)^{ \tau_\textbf{\emph{i}}(q) +\max\{-L_\lambda r, 0\} } \, \,  \alpha_2 (\textbf{\emph{i}})^ {t} \Bigg)^{1/k} \\ \\
&\leq& V(s,r,\lambda) \, \lim_{k \to \infty} \Bigg( \sum_{\textbf{\emph{i}} \in \mathcal{I}^{k}} p(\textbf{\emph{i}})^{q} \, \bigg(\frac{\alpha_1 (\textbf{\emph{i}})}{\alpha_2 (\textbf{\emph{i}})}\bigg)^{ \tau_\textbf{\emph{i}}(q) } \, \,  \alpha_2 (\textbf{\emph{i}})^ {t} \Bigg)^{1/k} \\ \\
&=&  V(s,r,\lambda)  P(t,q).
\end{eqnarray*}
The proofs of the other inequalities are similar and omitted.
\\ \\
(2)  The continuity of $P(s,q)$  on $\mathbb{R} \times (0, \infty)$ and $\mathbb{R} \times \{0\}$ follows immediately from (1).
\\ \\
(3)  Let $s \in \mathbb{R}$, $q > 0$ and $\varepsilon > 0$.  Since $  P(s+\varepsilon, q),   P(s,q) \in (0, \infty)$, by (1) we have
\[
\frac{  P(s+\varepsilon, q)}{  P(s,q)} \leq V(\varepsilon, 0, q) = \alpha_{\max}^\varepsilon < 1
\]
and so $P(s,q)$ is strictly decreasing in $s$ for $q>0$.  Similarly
\[
\frac{  P(s, q+\varepsilon)}{  P(s,q)} \leq V(0,\varepsilon, q) = p_{\max}^\varepsilon < 1
\]
and so $P(s,q)$ is strictly decreasing in $q$ for $q>0$.  For the cases when $q=0$ the arguments are similar and omitted.
\\ \\
(4)  Fix $q \geq 0$.  It follows from (1) that $  P(s,q) >1$ as $s \to -\infty$ and that $  P(s, q)<1$ as $s \to \infty$.  These facts, combined with parts (2) and (3), imply that there is a unique value of $s$ for which $  P(s,q)=1$. \hfill \qed

\subsection{Proof of Lemma \ref{gammak}} \label{gammakproofsnew}

(1-3) follow immediately from the definition of $\gamma_k$, using the corresponding properties of $\tau_1$ and $\tau_2$.  We will now prove (4), which states that $\gamma_k$ is convex on $(0, \infty)$.  Let $k \in \mathbb{N}$, $0<q_0<q_1<\infty$ and let $\lambda >1$. We have
\begin{eqnarray*}
&\,& \hspace{-10mm} \Psi_k^{ \frac{\gamma_k(q_0)}{\lambda} + \frac{\gamma_k(q_1)(\lambda-1)}{\lambda}, \frac{q_0}{\lambda} + \frac{q_1(\lambda-1)}{\lambda}} \\ \\
&=& \sum_{\textbf{\emph{i}} \in \mathcal{I}^k}  p(\textbf{\emph{i}})^{\frac{q_0}{\lambda} + \frac{q_1(\lambda-1)}{\lambda}} \, \bigg(\frac{\alpha_1 (\textbf{\emph{i}})}{\alpha_2 (\textbf{\emph{i}})}\bigg)^{ \tau_\textbf{\emph{i}}\big(\frac{q_0}{\lambda} + \frac{q_1(\lambda-1)}{\lambda}\big) } \, \,  \alpha_2 (\textbf{\emph{i}})^ { \frac{\gamma_k(q_0)}{\lambda} + \frac{\gamma_k(q_1)(\lambda-1)}{\lambda}} \\ \\
&\leq& \sum_{\textbf{\emph{i}} \in \mathcal{I}^k}
 \Bigg( p(\textbf{\emph{i}})^{\frac{q_0}{\lambda} } \, \bigg(\frac{\alpha_1 (\textbf{\emph{i}})}{\alpha_2 (\textbf{\emph{i}})}\bigg)^{\frac{ \tau_\textbf{\emph{i}}(q_0)}{\lambda}  } \, \,  \alpha_2 (\textbf{\emph{i}})^ { \frac{\gamma_k(q_0)}{\lambda} }\Bigg) 
\Bigg( p(\textbf{\emph{i}})^{ \frac{q_1(\lambda-1)}{\lambda}} \, \bigg(\frac{\alpha_1 (\textbf{\emph{i}})}{\alpha_2 (\textbf{\emph{i}})}\bigg)^{ \frac{\tau_\textbf{\emph{i}}( q_1)(\lambda-1)}{\lambda} } \, \,  \alpha_2 (\textbf{\emph{i}})^ {  \frac{\gamma_k(q_1)(\lambda-1)}{\lambda}}\Bigg) \\ \\
&\,& \qquad \qquad  \qquad \qquad \qquad  \qquad \qquad  \qquad  \text{since $\tau_1$ and $\tau_2$ are convex on $(0,\infty)$}\\ \\ 
&\leq&  \Bigg( \sum_{\textbf{\emph{i}} \in \mathcal{I}^k}
p(\textbf{\emph{i}})^{q_0} \, \bigg(\frac{\alpha_1 (\textbf{\emph{i}})}{\alpha_2 (\textbf{\emph{i}})}\bigg)^{ \tau_\textbf{\emph{i}}(q_0)  } \, \,  \alpha_2 (\textbf{\emph{i}})^ { \gamma_k(q_0) }\Bigg)^{1/\lambda}
\Bigg( \sum_{\textbf{\emph{i}} \in \mathcal{I}^k} p(\textbf{\emph{i}})^{ q_1} \, \bigg(\frac{\alpha_1 (\textbf{\emph{i}})}{\alpha_2 (\textbf{\emph{i}})}\bigg)^{ \tau_\textbf{\emph{i}}( q_1) } \, \,  \alpha_2 (\textbf{\emph{i}})^ {  \gamma_k(q_1)}\Bigg) ^{(\lambda-1)/\lambda} \\ \\
&\,& \qquad \qquad  \qquad \qquad \qquad \qquad \qquad  \qquad  \text{by H\"older's inequality}\\ \\ 
&=&  \bigg(  \Psi_k^{ \gamma_k(q_0), q_0} \bigg)^{1/\lambda}
\bigg(  \Psi_k^{ \gamma_k(q_1), q_1} \bigg)^{(\lambda-1)/\lambda} \\ \\
&=&1
\end{eqnarray*}
by the definition of $\gamma_k$, which shows
\[
\gamma_k \Big(\frac{q_0}{\lambda} + \frac{q_1(\lambda-1)}{\lambda} \Big) \ \leq  \ \frac{\gamma_k(q_0)}{\lambda} + \frac{\gamma_k(q_1)(\lambda-1)}{\lambda}
\]
completing the proof.  \hfill \qed

\subsection{Proof of Lemma \ref{gamma}} \label{gammaproofs}

(1) The fact that $\gamma$ is strictly decreasing follows from the fact that $P(s,q)$ is strictly decreasing in both $s$ and $q$ and so in order to maintain $P(\gamma(q), q) = 1$, if $q$ increases, $\gamma(q)$ must decrease.
\\ \\
(2) Continuity of $\gamma$ follows easily from the continuity of $P$ and the identity $P(\gamma(q), q) = 1$.  Indeed, suppose $\gamma$ was not continuous.  Then we may find a sequence $q_n \to q$ such that $\gamma(q_n) \to s \neq \gamma(q)$.  However, $1 = P(\gamma(q_n), q_n) \to P(s,q) \neq 1$ which is a contradiction.
\\ \\
(3) This follows easily by the definition of $\gamma_k$ and the fact that $\Psi_k^{s,q} \to P(s,q)$ as $k \to \infty$. 
\\ \\
(4) This follows immediately from part (3) and Lemma \ref{gammak} (3).
\\ \\
(5) This follows immediately from part (3) and Lemma \ref{gammak} (4) since the pointwise limit of convex functions is convex.

\section{Proof of Theorem \ref{main}} \label{mainproof}

\subsection{Proofs of some key estimates} \label{mainproof0}

Let $q \geq 0$.  For
\[
\textbf{\emph{i}} = (i_1,i_2, \dots, i_{k-1}, i_k)  \in \mathcal{I}^*
\]
let
\[
 \overline{\textbf{\emph{i}}} = (i_1,i_2, \dots, i_{k-1}) \in \mathcal{I}^* \cup \{ \omega\},
\]
where $\omega$ is the empty word.  Note that the map $S_\omega$ is taken to be the identity map, which has singular values both equal to 1.  For $\delta \in (0,1]$ we define the $\delta$-\emph{stopping}, $\mathcal{I}_\delta$, as
\[
\mathcal{I}_\delta = \big\{\textbf{\emph{i}} \in \mathcal{I}^* : \alpha_2(\textbf{\emph{i}}) < \delta \leq \alpha_2( \overline{\textbf{\emph{i}}}) \big\}.
\]
Note that for $\textbf{\emph{i}} \in \mathcal{I}_\delta$ we have
\begin{equation} \label{stoppingest}
\alpha_{\min} \, \delta \leq \alpha_2(\textbf{\emph{i}}) < \delta.
\end{equation}

For $\textbf{\emph{i}} \in \mathcal{I}^*$, let $\mu_\textbf{\emph{i}} = p(\textbf{\emph{i}}) \,  \mu \circ S_\textbf{\emph{i}}^{-1}$ and $F_\textbf{\emph{i}} = S_\textbf{\emph{i}}(F) = \text{supp} \mu_\textbf{\emph{i}}$.  Note that for any $\delta \in (0,1]$, 
\[
\mu = \sum_{\textbf{\emph{i}} \in \mathcal{I}_\delta} \mu_\textbf{\emph{i}}.
\]
This fact will be used throughout the subsequent proofs without being mentioned explicitly.

\begin{lma} \label{deltaconv}
Let $t \in \mathbb{R}, q \geq 0$.

\begin{itemize}

\item[(1)]  If $t>\gamma(q)$, then
\[
\sum_{\textbf{{i}} \in \mathcal{I}_\delta}\psi^{t,q}(\textbf{{i}}) \  \lesssim_{t,q}  \ 1
\]
for all $\delta \in (0,1]$.

\item[(2)] If $t<\gamma(q)$, then
\[
\sum_{\textbf{{i}} \in \mathcal{I}_\delta}\psi^{t,q}(\textbf{{i}}) \  \gtrsim_{t,q} \ 1
\]
for all $\delta \in (0,1]$.

\end{itemize}
\end{lma}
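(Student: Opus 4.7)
The plan is to compare the $\delta$-stopping sum $\sum_{\mathcal{I}_\delta}\psi^{t,q}(\textbf{i})$ with the fixed-length sums $\Psi_k^{t,q}$, whose exponential growth rate is $P(t,q)$, using the sub- and supermultiplicative properties from Lemma~\ref{additive}(b). The first step is a uniform exponential estimate for $\Psi_k^{t,q}$: in the supermultiplicative regime $t \ge \tau_1(q)+\tau_2(q)$, the sequence $(\Psi_k^{t,q})^{1/k}$ is nondecreasing with supremum $P(t,q)$, giving $\Psi_k^{t,q} \le P(t,q)^k$ for every $k$; in the submultiplicative regime $t \le \tau_1(q)+\tau_2(q)$, $\Psi_k^{t,q} \ge P(t,q)^k$ for every $k$, and when $P(t,q)<1$ one additionally finds $k_0$ with $\Psi_{k_0}^{t,q}<1$ and iterates submultiplicativity to obtain geometric decay $\Psi_k^{t,q} \le C\rho^k$ with $\rho \in (P(t,q),1)$.

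For part~(1), with $t > \gamma(q)$ and hence $P(t,q)<1$, both regimes yield $\Psi_k^{t,q} \le C\rho^k$ with $\rho<1$. Partitioning the stopping by word length then gives
\[
\sum_{\textbf{i} \in \mathcal{I}_\delta}\psi^{t,q}(\textbf{i}) \ = \ \sum_{k \ge 1}\sum_{\textbf{i} \in \mathcal{I}_\delta \cap \mathcal{I}^k}\psi^{t,q}(\textbf{i}) \ \le \ \sum_{k \ge 1}\Psi_k^{t,q} \ \le \ \frac{C}{1-\rho},
\]
a constant depending only on $t$ and $q$.

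For part~(2), with $t < \gamma(q)$ and hence $P(t,q)>1$, I would choose $k=k(\delta) \asymp \log(1/\delta)$ large enough that $\alpha_{\max}^k < \alpha_{\min}\delta$, so that every $\textbf{i} \in \mathcal{I}^k$ has a unique prefix $\textbf{j} \in \mathcal{I}_\delta$, yielding the partition
\[
\Psi_k^{t,q} \ = \ \sum_{\textbf{j} \in \mathcal{I}_\delta}\sum_{\textbf{l}:|\textbf{j}\textbf{l}|=k}\psi^{t,q}(\textbf{j}\textbf{l}).
\]
In the submultiplicative subcase, $\psi^{t,q}(\textbf{j}\textbf{l}) \le \psi^{t,q}(\textbf{j})\psi^{t,q}(\textbf{l})$ bounds this above by $\bigl(\sum_{\mathcal{I}_\delta}\psi^{t,q}(\textbf{j})\bigr)\cdot\max_{0\le m \le \Delta(\delta)}\Psi_m^{t,q}$, where $\Delta(\delta) = k_{\max}(\delta)-k_{\min}(\delta) \asymp \log(1/\delta)$. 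Comparing the exponential lower bound $\Psi_k^{t,q} \ge P(t,q)^k$ with the polynomial-in-$1/\delta$ size of the maximum, rearranging produces $\sum_{\mathcal{I}_\delta}\psi^{t,q}(\textbf{j}) \gtrsim 1$ after careful bookkeeping of the exponents $k_{\min},k_{\max},\Delta$ in terms of $\alpha_{\min},\alpha_{\max}$.

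The main obstacle is the supermultiplicative subcase of part~(2), where $\psi^{t,q}(\textbf{j}\textbf{l}) \ge \psi^{t,q}(\textbf{j})\psi^{t,q}(\textbf{l})$ controls $\Psi_k^{t,q}$ from below rather than from above, reversing the direction of the preceding argument. I would handle this by transferring to a nearby parameter: using $\psi^{t,q}(\textbf{i}) = \psi^{t',q}(\textbf{i})\,\alpha_2(\textbf{i})^{t-t'}$ together with $\alpha_2(\textbf{i}) < \delta \le 1$, any choice of $t' \in (t,\gamma(q)]$ gives $\alpha_2(\textbf{i})^{t-t'} \ge \delta^{t-t'} \ge 1$ uniformly on $\mathcal{I}_\delta$, so $\sum_{\mathcal{I}_\delta}\psi^{t,q}(\textbf{i}) \ge \sum_{\mathcal{I}_\delta}\psi^{t',q}(\textbf{i})$. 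Taking $t' = \gamma(q)$ reduces the problem to a single boundary estimate $\sum_{\mathcal{I}_\delta}\psi^{\gamma(q),q}(\textbf{i}) \gtrsim 1$, which I would verify directly by exploiting $P(\gamma(q),q)=1$ together with the multiplicativity of $\psi^{\tau_1(q)+\tau_2(q),q}$ on the boundary line (Lemma~\ref{additive}(a2)), a further transfer from $\gamma(q)$ down to $\tau_1(q)+\tau_2(q)$ being the key step in the multiplicative reduction.
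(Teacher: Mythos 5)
Your part~(1) is essentially the paper's argument (root test: $(\Psi_k^{t,q})^{1/k}\to P(t,q)<1$ forces $\sum_k\Psi_k^{t,q}<\infty$, and this dominates the stopping sum), just with the geometric bound made explicit. Your direct argument for the \emph{submultiplicative} subcase of part~(2) is a workable alternative to the paper's contradiction argument: the paper assumes $\sum_{\mathcal{I}_\delta}\psi^{t,q}\leq 1$, iterates submultiplicativity to build a cheap concatenation stopping $\mathcal{I}_{\delta,k}$, compares it to $\mathcal{I}^k$ to bound $\Psi_k^{t,q}$ and deduce $P(t,q)\leq 1$, a contradiction; your version decomposes $\Psi_k^{t,q}$ over prefixes in $\mathcal{I}_\delta$ and compares exponential rates. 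The bookkeeping you flag does close: fixing $\varepsilon>0$ small, $\Psi_m^{t,q}\leq C_\varepsilon(P(t,q)+\varepsilon)^m$ for all $m$, $\Psi_k^{t,q}\geq P(t,q)^k$, $k\asymp k_{\max}(\delta)$, and the exponent ratio $\Delta(\delta)/k\to 1-\log\alpha_{\max}/\log\alpha_{\min}<1$, which is enough to get a uniform constant. This route is more laborious than the paper's contradiction but is otherwise sound.

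The \emph{supermultiplicative} subcase of part~(2) is where the proposal has a genuine gap. The first transfer, $\sum_{\mathcal{I}_\delta}\psi^{t,q}\geq\sum_{\mathcal{I}_\delta}\psi^{t',q}$ for $t'>t$, is correct, but it does not help, because the boundary estimate $\sum_{\mathcal{I}_\delta}\psi^{\gamma(q),q}\gtrsim 1$ is \emph{harder}, not easier, than the original statement: at $t=\gamma(q)$ one has $P(\gamma(q),q)=1$, so $\Psi_k^{\gamma(q),q}\leq 1$ for all $k$ and there is no $k$ with $\Psi_k\geq 1$ to iterate from. Moreover, the constant in Lemma~\ref{deltaconv}(2) is allowed to depend on $t$, and the paper's proof naturally produces a constant that degenerates as $t\uparrow\gamma(q)$; your reduction would need a $t$-independent constant, which is a strictly stronger claim not established. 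The proposed ``further transfer from $\gamma(q)$ down to $\tau_1(q)+\tau_2(q)$'' also goes the wrong way: for $s<s'$ and $\textbf{\emph{i}}\in\mathcal{I}_\delta$ one has $\psi^{s,q}(\textbf{\emph{i}})\geq\psi^{s',q}(\textbf{\emph{i}})$, so $\sum_{\mathcal{I}_\delta}\psi^{\tau_1(q)+\tau_2(q),q}\geq\sum_{\mathcal{I}_\delta}\psi^{\gamma(q),q}$ is an \emph{upper} bound on the quantity you want to bound below. If instead you try to quantify the loss multiplicatively via $\alpha_2(\textbf{\emph{i}})\geq\alpha_{\min}\delta$, you get
\[
\sum_{\mathcal{I}_\delta}\psi^{\gamma(q),q}(\textbf{\emph{i}})\ \geq\ (\alpha_{\min}\delta)^{\gamma(q)-\tau_1(q)-\tau_2(q)}\sum_{\mathcal{I}_\delta}\psi^{\tau_1(q)+\tau_2(q),q}(\textbf{\emph{i}}),
\]
and the multiplicative weight $\psi^{\tau_1(q)+\tau_2(q),q}$ gives $\sum_{\mathcal{I}_\delta}\psi^{\tau_1(q)+\tau_2(q),q}\geq P(\tau_1(q)+\tau_2(q),q)^{k_{\min}(\delta)}$ at best, but Lemma~\ref{P}(1) only yields $\log P(\tau_1(q)+\tau_2(q),q)\leq\big(\gamma(q)-\tau_1(q)-\tau_2(q)\big)\log(1/\alpha_{\min})$, so the two $\delta$-powers do not cancel in your favour and the product can still tend to~$0$. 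The paper avoids all of this by working directly at $t<\gamma(q)$: since $P(t,q)>1$, fix once and for all a $k=k(t,q)$ with $\Psi_k^{t,q}\geq 1$, iterate supermultiplicativity over $k$-blocks to show $\sum_{\mathcal{I}_{k,\delta}}\psi^{t,q}\geq 1$ for the auxiliary stopping $\mathcal{I}_{k,\delta}$, and then pass from $\mathcal{I}_{k,\delta}$ to $\mathcal{I}_\delta$ at a bounded multiplicative cost $c_k$; this produces exactly the $(t,q)$-dependent constant the lemma asserts. You should replace the transfer-to-boundary plan with an argument of this type.
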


\begin{proof}
(1)  Let $t>\gamma(q)$ and $\delta \in (0,1]$.  We have
\[
\sum_{\textbf{\emph{i}} \in \mathcal{I}_\delta} \psi^{t,q}(\textbf{\emph{i}}) \leq \sum_{\textbf{\emph{i}}\in \mathcal{I}^*}\psi^{t,q}(\textbf{\emph{i}})= \sum_{k=1}^\infty \sum_{\textbf{\emph{i}} \in \mathcal{I}^k}\psi^{t,q}(\textbf{\emph{i}}) = \sum_{k=1}^\infty \Psi_k^{t,q} < \infty
\]
since $\lim_{k \to \infty} (\Psi_k^{t,q})^{1/k} = P(t,q) < 1$.  The result follows since $\sum_{k=1}^\infty \Psi_k^{t,q}$ is a constant depending only on $t$ and $q$.
\\ \\
(2)  Let $t< \gamma(q)$.  Consider two cases according to whether $t$ is in the submultiplicative region $[0, \tau_1(q)+\tau_2(q)]$, or supermultiplicative region $(\tau_1(q)+\tau_2(q), \infty)$. 
\\ \\
\textbf{Case (i):} $0 \leq t \leq \tau_1(q)+\tau_2(q)$.  We remark that an argument similar to the following was used in \cite{affine, me_box}, but we include the details for completeness.  Let $\delta \in (0,1]$ and assume that
\begin{equation} \label{bounded}
\sum_{\textbf{\emph{i}} \in \mathcal{I}_\delta} \psi^{t,q}(\textbf{\emph{i}}) \leq 1.
\end{equation}
To obtain a contradiction we will show this implies that $t \geq \gamma(q)$.  Let $k(\delta) = \max \{ \lvert \textbf{\emph{i}} \rvert : \textbf{\emph{i}} \in \mathcal{I}_\delta \}$, where $\lvert \textbf{\emph{i}} \rvert$ denotes the length of the string $\textbf{\emph{i}}$, and let
\[
\mathcal{I}_{\delta, k} = \big\{\textbf{\emph{i}}_1 \dots \textbf{\emph{i}}_m : \textbf{\emph{i}}_j  \in \mathcal{I}_\delta \text{ for all $j = 1, \dots, m$}, \ \text{$\lvert \textbf{\emph{i}}_1 \dots \textbf{\emph{i}}_m\rvert \leq k$  but  $\lvert \textbf{\emph{i}}_1 \dots \textbf{\emph{i}}_{m} \textbf{\emph{i}}_{m+1}\rvert > k$  for some  $\textbf{\emph{i}}_{m+1} \in \mathcal{I}_\delta$}  \big\}.
\]
For all $\textbf{\emph{i}} \in \mathcal{I}^*$ we have, by the submultiplicativity of $\psi^{t,q}$,
\[
 \sum_{\textbf{\emph{j}} \in \mathcal{I}_\delta} \psi^{t,q}({\textbf{\emph{i}}  \textbf{\emph{j}}}) \ \leq \ \sum_{\textbf{\emph{j}} \in \mathcal{I}_\delta} \psi^{t,q}(\textbf{\emph{i}} ) \,  \psi^{t,q}(\textbf{\emph{j}})
 \ = \ \psi^{t,q}(\textbf{\emph{i}}) \, \sum_{\textbf{\emph{j}} \in \mathcal{I}_\delta}   \psi^{t,q}(\textbf{\emph{j}})
\ \leq \ \psi^{t,q}(\textbf{\emph{i}})
\]
by (\ref{bounded}).  It follows by repeated application of the above that, for all $k \in \mathbb{N}$,
\begin{equation} \label{bounded2old}
\sum_{\textbf{\emph{i}} \in \mathcal{I}_{\delta,k}} \psi^{t,q}(\textbf{\emph{i}}) \leq 1.
\end{equation}
Let $\textbf{\emph{i}} \in \mathcal{I}^{k}$ for some $k \in \mathbb{N}$.  It follows that $\textbf{\emph{i}} = \textbf{\emph{j}}_1 \, \textbf{\emph{j}}_2$ for some $\textbf{\emph{j}}_1 \in \mathcal{I}_{\delta,k}$ and some $\textbf{\emph{j}}_2 \in \mathcal{I}^* \cup \{\omega\}$ with $\lvert \textbf{\emph{j}}_2 \rvert \leq k(\delta)$ and by the submultiplicativity of $\psi^{t,q}$,
\[
\psi^{t,q}(\textbf{\emph{i}})\, = \,\psi^{t,q}({\textbf{\emph{j}}_1 \, \textbf{\emph{j}}_2}) \, \leq \,  \psi^{t,q}({\textbf{\emph{j}}_1}) \, \psi^{t,q}( {\textbf{\emph{j}}_2}) \, \leq \, c_{k(\delta)} \,  \psi^{t,q}({\textbf{\emph{j}}_1} ),
\]
where $c_{k(\delta)} = \max\{ \psi^{t,q}( {\textbf{\emph{i}}}) : \lvert \textbf{\emph{i}} \rvert \leq k(\delta) \}< \infty$ is a constant which depends only on $\delta$.  Since there are at most $\lvert \mathcal{I}\rvert^{k(\delta)+1}$ elements $\textbf{\emph{j}}_2 \in \mathcal{I}^*\cup \{\omega\}$ with $\lvert \textbf{\emph{j}}_2 \rvert \leq k(\delta)$ we have
\[
\Psi^{t,q}_{k} \, \, = \, \, \sum_{\textbf{\emph{i}} \in \mathcal{I}^{k}} \psi^{t,q}(\textbf{\emph{i}})  \, \, \leq  \, \, \lvert \mathcal{I} \rvert^{k(\delta)+1} \,c_{k(\delta)} \,  \sum_{\textbf{\emph{i}} \in \mathcal{I}_{\delta, k}} \psi^{t,q}(\textbf{\emph{i}})
 \, \, \leq  \, \, \lvert \mathcal{I} \rvert^{k(\delta)+1} \,c_{k(\delta)}
\]
by (\ref{bounded2old}).  Since this is true for all $k \in \mathbb{N}$ we have
\[
P(t,q) = \lim_{k \to \infty} \big(\Psi^{t,q}_{k}\big)^{1/k} \leq 1
\]
from which it follows that $t \geq s$.
\\ \\
\textbf{Case (ii):} $t >  \tau_1(q)+\tau_2(q)$.  Since $t<\gamma(q)$ it follows that $\sum_{\textbf{\emph{i}} \in \mathcal{I}^k} \psi^{t,q}(\textbf{\emph{i}}) \to \infty$ as $k \to \infty$.  Therefore, we may fix a $k \in \mathbb{N}$ such that
\begin{equation} \label{bigger1}
\sum_{\textbf{\emph{i}} \in \mathcal{I}^k} \psi^{t,q}(\textbf{\emph{i}}) \geq 1.
\end{equation}
Fix $\delta \in (0,1]$ and let
\begin{eqnarray*}
\mathcal{I}_{k,\delta} = \big\{\textbf{\emph{i}}_1 \dots \textbf{\emph{i}}_m &:&  \textbf{\emph{i}}_j  \in \mathcal{I}^k \text{ for all $j = 1, \dots, m$, } \quad \\ \\
&\,&\text{ $\alpha_2(\textbf{\emph{i}}_1 \dots \textbf{\emph{i}}_m) \geq \delta$ but $\alpha_2(\textbf{\emph{i}}_1 \dots \textbf{\emph{i}}_m \textbf{\emph{i}}_{m+1}) < \delta$ for some $\textbf{\emph{i}}_{m+1} \in \mathcal{I}^k$} \big\}.
\end{eqnarray*}
For all $\textbf{\emph{i}} \in \mathcal{I}^*$ we have, by the supermultiplicativity of $\psi^{t,q}$,
\[
 \sum_{\textbf{\emph{j}} \in \mathcal{I}^k} \psi^{t,q}({\textbf{\emph{i}}  \textbf{\emph{j}}})  \ \geq \  \sum_{\textbf{\emph{j}} \in \mathcal{I}^k} \psi^{t,q}(\textbf{\emph{i}}) \,  \psi^{t,q}(\textbf{\emph{j}}) \ = \  \psi^{t,q}(\textbf{\emph{i}} ) \, \sum_{\textbf{\emph{j}} \in \mathcal{I}^k}   \psi^{t,q}(\textbf{\emph{j}}) \ \geq \ \psi^{t,q}(\textbf{\emph{i}} )
\]
by (\ref{bigger1}).  It follows by repeated application of the above that
\begin{equation} \label{bounded2}
\sum_{\textbf{\emph{i}} \in \mathcal{I}_{k,\delta}} \psi^{t,q}(\textbf{\emph{i}}) \geq 1.
\end{equation}
Let $\textbf{\emph{i}} \in \mathcal{I}_{\delta}$.  It follows that $\textbf{\emph{i}} = \textbf{\emph{j}}_1 \textbf{\emph{j}}_2$ for some $\textbf{\emph{j}}_1 \in \mathcal{I}_{k,\delta}$ and some $\textbf{\emph{j}}_2 \in \mathcal{I}^*$.  Since $\alpha_2(\textbf{\emph{i}}) \geq \delta \, \alpha_{\min}$ by (\ref{stoppingest}) and $\alpha_2(\textbf{\emph{j}}_1) \leq \delta \alpha_{\min}^{-k}$ we have
\begin{equation} \label{lengthest}
\alpha_2(\textbf{\emph{j}}_1) \leq \alpha_2(\textbf{\emph{i}}) \alpha_{\min}^{-(k+1)} \leq \alpha_2(\textbf{\emph{j}}_1) \alpha_{\max}^{\lvert \textbf{\emph{j}}_2\rvert} \alpha_{\min}^{-(k+1)}
\end{equation}
which yields $\lvert \textbf{\emph{j}}_2 \rvert \leq (k+1)\frac{\log \alpha_{\min}}{\log \alpha_{\max}}$.  Setting $c_k  =  \min \Big\{ \psi^{t,q}(\textbf{\emph{i}}) : \lvert \textbf{\emph{i}}\rvert \leq (k+1) \frac{\log \alpha_{\min}}{\log \alpha_{\max}} \Big\} >0$, it follows from (\ref{lengthest}), (\ref{bounded2}) and the supermultiplicativity of $\psi^{t,q}$ that
\[
\sum_{\textbf{\emph{i}} \in \mathcal{I}_{\delta}} \psi^{t,q}(\textbf{\emph{i}}) \geq c_k  \sum_{\textbf{\emph{i}} \in \mathcal{I}_{k,\delta}}  \psi^{t,q}(\textbf{\emph{i}}) \geq c_k.
\]
Although $L(t,q)$ appears to depend on $k$, recall that we fixed $k$ at the beginning of the argument and the choice of $k$ depended only on $t$ and $q$.
\end{proof}

\begin{lma} \label{projectedmeasure}
For $q \geq 0$, $\delta \in (0, 1]$ and $\textbf{i} \in \mathcal{I}_\delta$ we have
\[
D_\delta^q \big(\mu_{\textbf{i}}\big) \ \asymp \   D_{\delta/\alpha_1(\textbf{i})}^q \big(p(\textbf{i}) \,\pi_{\textbf{i}}\mu \big).
\]
\end{lma}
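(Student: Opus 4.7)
The plan is to collapse the two-dimensional mesh sum for $\mu_{\textbf{i}}$ to a one-dimensional mesh sum along the long axis of $S_{\textbf{i}}([0,1]^2)$, and then identify that one-dimensional measure with a rescaled, weighted copy of $\pi_{\textbf{i}}\mu$.  The geometric engine is that $\textbf{i} \in \mathcal{I}_\delta$ forces $\alpha_2(\textbf{i}) < \delta$ by (\ref{stoppingest}), so the rectangle $S_{\textbf{i}}([0,1]^2)$ is narrower than one mesh cell in its short direction.

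First I would introduce the auxiliary projection $\tilde{\pi}_{\textbf{i}} : \mathbb{R}^2 \to \mathbb{R}$ onto the long axis of $S_{\textbf{i}}([0,1]^2)$, which is either $\pi_1$ or $\pi_2$ since the rectangle has sides parallel to the coordinate axes.  A case-by-case inspection of the four cases in the definition of $\pi_{\textbf{i}}$ shows that in each case one has the identity $\tilde{\pi}_{\textbf{i}} \circ S_{\textbf{i}} = T \circ \pi_{\textbf{i}}$ for some affine similarity $T : \mathbb{R} \to \mathbb{R}$ of ratio $\alpha_1(\textbf{i})$.  The toggling of $\pi_{\textbf{i}}$ between $\pi_1$ and $\pi_2$ across $\mathcal{I}_A$ versus $\mathcal{I}_B$ is precisely what is needed to make this work, because the coordinate of the domain whose image under $S_{\textbf{i}}$ sweeps out the long axis depends on whether $S_{\textbf{i}}$ swaps the axes.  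Pushing $\mu$ through both sides yields $\tilde{\pi}_{\textbf{i}} \mu_{\textbf{i}} = p(\textbf{i})\, (\pi_{\textbf{i}}\mu) \circ T^{-1}$.

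Next I would establish the comparison $D_\delta^q(\mu_{\textbf{i}}) \asymp_q D_\delta^q(\tilde{\pi}_{\textbf{i}} \mu_{\textbf{i}})$.  For each $\delta$-mesh interval $I$ in the long direction, the slab $\tilde{\pi}_{\textbf{i}}^{-1}(I) \cap S_{\textbf{i}}([0,1]^2)$ is a sub-rectangle whose short side has length $\alpha_2(\textbf{i}) < \delta$, so it meets at most two $\delta$-mesh cubes $Q$ with $\tilde{\pi}_{\textbf{i}}(Q) = I$, and $\tilde{\pi}_{\textbf{i}} \mu_{\textbf{i}}(I) = \sum_Q \mu_{\textbf{i}}(Q)$ is a sum of at most two terms.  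Lemma \ref{inequality1} applied with $k \leq 2$ gives $\big(\tilde{\pi}_{\textbf{i}} \mu_{\textbf{i}}(I)\big)^q \asymp_q \sum_Q \mu_{\textbf{i}}(Q)^q$, and summing over $I$ yields the comparison.  Finally, since $T$ is a similarity of ratio $\alpha_1(\textbf{i})$, a standard mesh-orientation argument (one further application of Lemma \ref{inequality1}) gives $D_\delta^q(\nu \circ T^{-1}) \asymp_q D_{\delta/\alpha_1(\textbf{i})}^q(\nu)$ for any finite Borel measure $\nu$ on $\mathbb{R}$, and pulling out the homogeneous factor $p(\textbf{i})^q$ delivers the stated equivalence.

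Rather than a genuine obstacle, the only delicate step is the verification of the identity $\tilde{\pi}_{\textbf{i}} \circ S_{\textbf{i}} = T \circ \pi_{\textbf{i}}$ across all four cases of the definition of $\pi_{\textbf{i}}$, which is essentially the reason that definition is arranged the way it is; once this identity is in hand, the remainder of the argument is routine mesh bookkeeping driven by Lemma \ref{inequality1}.
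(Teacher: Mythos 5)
Your proof is correct and follows the same route the paper sketches: use the strict bound $\alpha_2(\textbf{i})<\delta$ to collapse the planar mesh sum to a one-dimensional one via Lemma~\ref{inequality1}, then identify the projected measure as a similarity rescaling of $p(\textbf{i})\,\pi_{\textbf{i}}\mu$. The only detail you make explicit that the paper leaves implicit is the conjugacy $\tilde{\pi}_{\textbf{i}}\circ S_{\textbf{i}} = T\circ\pi_{\textbf{i}}$ with $T$ a similarity of ratio $\alpha_1(\textbf{i})$, which is precisely the content of the paper's assertion that the projected measure ``is just a scaled down version of $p(\textbf{i})\,\pi_{\textbf{i}}\mu$ by the factor $\alpha_1(\textbf{i})$.''
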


\begin{proof}
This proof is straightforward and we only sketch it.  The key point is that since $\alpha_2(\textbf{\emph{i}}) \leq \delta$, the boxes which intersect $\text{supp}\mu_{\textbf{\emph{i}}}$ form a grid at most 3 deep in the direction of projection under $\pi_{\textbf{\emph{i}}}$.  This means that $D_\delta^q \big(\mu_{\textbf{\emph{i}}}\big)$ is comparable to $D_\delta^q $ of the projection of  $\mu_{\textbf{\emph{i}}}$ onto the longest side of the rectangle $S_ {\textbf{\emph{i}}}([0,1]^2)$ using Lemma \ref{inequality1}.  Finally, this measure is just a scaled down version of $p(\textbf{\emph{i}}) \,\pi_{\textbf{\emph{i}}}\mu$ by the factor $\alpha_1(\textbf{\emph{i}})$ and so scaling up completes the proof.
\end{proof}

\begin{lma} \label{simplebox}
For all $\varepsilon>0$, $\delta>0$, $q \geq 0$ and $p>0$  we have
\[
p^q \,  \delta^{-\tau_1(q) + \varepsilon/2} \ \lesssim_\varepsilon \  D_\delta^q(p \,\pi_1 \mu)  \ \lesssim_\varepsilon \ p^q \,  \delta^{-\tau_1(q) - \varepsilon/2}
\]
and
\[
p^q \, \delta^{-\tau_2(q) + \varepsilon/2} \  \lesssim_\varepsilon  \ D_\delta^q(p \,\pi_2 \mu) \  \lesssim_\varepsilon \ p^q \, \delta^{-\tau_2(q) - \varepsilon/2}.
\]
\end{lma}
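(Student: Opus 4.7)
The plan is first to reduce to the case $p=1$ by homogeneity. For any positive measure $\nu$, any $p>0$ and any $q\geq 0$,
\[
D_\delta^q(p\nu) \, = \, \sum_{Q\in\mathcal{D}_\delta}\big(p\nu(Q)\big)^q \, = \, p^q \, D_\delta^q(\nu),
\]
so the factor $p^q$ comes out of both expressions cleanly and the problem reduces to establishing, for $j=1,2$,
\[
\delta^{-\tau_j(q)+\varepsilon/2} \ \lesssim_\varepsilon \ D_\delta^q(\pi_j\mu) \ \lesssim_\varepsilon \ \delta^{-\tau_j(q)-\varepsilon/2}
\]
with implicit constants independent of $p$.

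For the small-$\delta$ regime I would invoke Theorem \ref{gdexists}, which guarantees that $\tau_j(q)$ exists as an honest limit for every $q\geq 0$, combined with the equivalent grid formulation recorded in Section \ref{prelimsection}, namely
\[
\tau_j(q) \, = \, \lim_{\delta\to 0}\frac{\log D_\delta^q(\pi_j\mu)}{-\log\delta}.
\]
The definition of the limit then produces some $\delta_0=\delta_0(\varepsilon,q)\in(0,1)$ such that
\[
\left|\frac{\log D_\delta^q(\pi_j\mu)}{-\log\delta}-\tau_j(q)\right| \, < \, \varepsilon/2 \qquad (0<\delta\leq\delta_0),
\]
which, since $-\log\delta>0$, exponentiates directly into the desired two-sided bound with implicit constant $1$.

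To cover the remaining range $\delta\in[\delta_0,1]$ I would observe that both sides of the target inequality are trapped between positive constants depending only on $\delta_0$, $q$ and the measure. Indeed $D_\delta^q(\pi_j\mu)$ is bounded above by the number of mesh intervals meeting $\mathrm{supp}(\pi_j\mu)$, itself controlled by $1/\delta_0+1$; and it is bounded below by a power-mean type argument using $\pi_j\mu(\mathbb{R})=1$ (for $q\in[0,1]$ one gets $\sum a_i^q\geq(\sum a_i)^q=1$ by subadditivity of $x^q$, while for $q\geq 1$ one gets $\sum a_i^q\geq (\sum a_i)^q/N^{q-1}$, which is bounded below since $N$ is). Meanwhile $\delta^{-\tau_j(q)\pm\varepsilon/2}$ is continuous and nonvanishing on the compact interval $[\delta_0,1]$. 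Absorbing all these constants into $\lesssim_\varepsilon$ completes the argument, and the case $j=2$ is symmetric.

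There is no real obstacle here beyond bookkeeping: the content is simply a quantitative reformulation of the fact that $\tau_j(q)$ exists as a genuine limit, which is exactly the output of Theorem \ref{gdexists}. The point worth flagging is that the implicit constants must be shown to depend on $\varepsilon$ and $q$ but \emph{not} on $p$, which the homogeneity step makes transparent. This uniformity in $p$ is precisely what allows the lemma to be applied later when summing $D_\delta^q(\mu_{\textbf{\emph{i}}})$ over the stopping $\mathcal{I}_\delta$, where the scaling factors $p(\textbf{\emph{i}})$ vary wildly with $\textbf{\emph{i}}$.
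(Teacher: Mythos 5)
Your proof is correct and takes essentially the same route as the paper, whose own proof is the single sentence ``This follows immediately from the definition of the $L^q$-spectrum.'' You have simply unpacked that remark: pull out the factor $p^q$ by homogeneity of $D_\delta^q$, then use the existence of $\tau_j(q)$ as an honest limit (Theorem \ref{gdexists} plus the grid formulation of Section \ref{prelimsection}) to get the two-sided bound for $\delta$ small, and absorb the remaining compact range of $\delta$ into the implicit constant. Your closing observation that the implicit constant is independent of $p$ is exactly the point that makes the lemma usable when summed over the stopping $\mathcal{I}_\delta$.
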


\begin{proof}
This follows immediately from the definition of the $L^q$-spectrum.
\end{proof}

\subsection{Proof of Theorem \ref{main} (1)}

Let $q \in [0,1]$ and $\delta \in (0,1]$.  It suffices to show that $\overline{\tau}_{\mu}(q) \leq \gamma(q)$.   We have
\[
D_\delta^q(\mu) =\sum_{Q \in \mathcal{D}_\delta} \mu(Q)^q=  \sum_{Q \in \mathcal{D}_\delta} \Bigg(\sum_{\textbf{\emph{i}} \in \mathcal{I}_\delta} \mu_{\textbf{\emph{i}}} (Q)\Bigg)^q \leq  \sum_{Q \in \mathcal{D}_\delta} \sum_{\textbf{\emph{i}} \in \mathcal{I}_\delta} \mu_{\textbf{\emph{i}}} (Q)^q=\sum_{\textbf{\emph{i}} \in \mathcal{I}_\delta} \sum_{Q \in \mathcal{D}_\delta}  \mu_{\textbf{\emph{i}}} (Q)^q 
 =\sum_{\textbf{\emph{i}} \in \mathcal{I}_\delta}D_\delta^q(\mu_\textbf{\emph{i}})  .
\]
 It follows that, for all $\varepsilon>0$,
\begin{eqnarray*}
 \delta^{\gamma(q)+\varepsilon} D_\delta^q (\mu) &\leq& \delta^{\gamma(q)+\varepsilon} \sum_{\textbf{\emph{i}} \in \mathcal{I}_{\delta}} D_\delta^q \big(\mu_{\textbf{\emph{i}}}\big)\\ \\
&\lesssim&  \delta^{\gamma(q)+\varepsilon} \sum_{\textbf{\emph{i}} \in \mathcal{I}_{\delta}}  D_{\delta/\alpha_1(\textbf{\emph{i}})}^q \big(p(\textbf{\emph{i}}) \,\pi_{\textbf{\emph{i}}}\mu \big) \qquad \qquad   \text{by Lemma \ref{projectedmeasure}}\\ \\
&\ \lesssim_\varepsilon&   \delta^{\gamma(q)+\varepsilon} \sum_{\textbf{\emph{i}} \in \mathcal{I}_{\delta}} p(\textbf{\emph{i}})^q \,  \bigg(\frac{\delta}{\alpha_1(\textbf{\emph{i}})} \bigg)^{-\tau_{\textbf{\emph{i}}}(q)-\varepsilon/2} \qquad \qquad \text{by Lemma \ref{simplebox}}\\ \\
& \ \  \ \lesssim_{\varepsilon, q}&  \sum_{\textbf{\emph{i}} \in \mathcal{I}_{\delta}} p(\textbf{\emph{i}})^q \, \alpha_1(\textbf{\emph{i}})^{\tau_{\textbf{\emph{i}}}(q)+\varepsilon/2}\alpha_2(\textbf{\emph{i}})^{\gamma(q)+\varepsilon-\tau_{\textbf{\emph{i}}}(q)-\varepsilon/2}\qquad \qquad \text{by (\ref{stoppingest})}\\ \\
&=&  \sum_{\textbf{\emph{i}} \in \mathcal{I}_{\delta}} \psi^{\gamma(q)+\varepsilon/2, \, q}(\textbf{\emph{i}})\\ \\
&\ \ \ \lesssim_{\varepsilon, q}& 1
\end{eqnarray*}
by Lemma \ref{deltaconv} (1).  It follows that $\overline{\tau}_{\mu}(q) \leq \gamma(q)+\varepsilon$ and, since $\varepsilon>0$ was arbitrary, we have the desired upper bound. \hfill \qed

\subsection{Proof of Theorem \ref{main} (2)}

Let $q \geq 1$.  It suffices to show that $\underline{\tau}_{\mu}(q) \geq \gamma(q)$. We have
\[
D_\delta^q(\mu) =\sum_{Q \in \mathcal{D}_\delta} \mu(Q)^q
 =  \sum_{Q \in \mathcal{D}_\delta} \Bigg(\sum_{\textbf{\emph{i}} \in \mathcal{I}_\delta} \mu_{\textbf{\emph{i}}} (Q)\Bigg)^q \geq  \sum_{Q \in \mathcal{D}_\delta} \sum_{\textbf{\emph{i}} \in \mathcal{I}_\delta} \mu_{\textbf{\emph{i}}} (Q)^q =\sum_{\textbf{\emph{i}} \in \mathcal{I}_\delta} \sum_{Q \in \mathcal{D}_\delta}  \mu_{\textbf{\emph{i}}} (Q)^q=\sum_{\textbf{\emph{i}} \in \mathcal{I}_\delta}D_\delta^q(\mu_\textbf{\emph{i}}) .
\]
 It follows that, for all $\varepsilon>0$,
\begin{eqnarray*}
\delta^{\gamma(q)-\varepsilon} D_\delta^q (\mu) &\geq&   \delta^{\gamma(q)-\varepsilon} \sum_{\textbf{\emph{i}}\in \mathcal{I}_{\delta}} D_{\delta}^q \big(\mu_{\textbf{\emph{i}}}\big)\\ \\
&\gtrsim&\delta^{\gamma(q)-\varepsilon}    \sum_{\textbf{\emph{i}} \in \mathcal{I}_{\delta}} D_{\delta/\alpha_1(\textbf{\emph{i}})}^q\big(p(\textbf{\emph{i}}) \, \pi_{\textbf{\emph{i}}} \mu\big) \qquad \qquad  \text{by Lemma \ref{projectedmeasure}}\\ \\
& \ \gtrsim_\varepsilon& \delta^{\gamma(q)-\varepsilon} \sum_{\textbf{\emph{i}} \in \mathcal{I}_{\delta}}   p(\textbf{\emph{i}})^q \, \bigg(\frac{\delta}{\alpha_1(\textbf{\emph{i}})} \bigg)^{-\tau_{\textbf{\emph{i}}}(q)+\varepsilon/2} \qquad \qquad \text{by Lemma \ref{simplebox}} \\ \\
& \ \ \ \gtrsim_{\varepsilon, q}&  \sum_{\textbf{\emph{i}} \in \mathcal{I}_{\delta}}  p(\textbf{\emph{i}})^q \, \alpha_2(\textbf{\emph{i}})^{\gamma(q)-\varepsilon-\tau_{\textbf{\emph{i}}}(q)+\varepsilon/2} \, \alpha_1(\textbf{\emph{i}})^{\tau_{\textbf{\emph{i}}}(q)-\varepsilon/2}  \qquad \qquad \text{by (\ref{stoppingest})}\\ \\
&=&  \sum_{\textbf{\emph{i}} \in \mathcal{I}_{\delta}} \psi^{\gamma(q)-\varepsilon/2, \, q}(\textbf{\emph{i}})  \\ \\
& \ \ \ \gtrsim_{\varepsilon, q} & 1
\end{eqnarray*}
by Lemma \ref{deltaconv} (2).  It follows that $\underline{\tau}_{\mu}(q) \geq \gamma(q)-\varepsilon$ and, since $\varepsilon$ was arbitrary, we have the desired lower bound. \hfill $\qed$

\subsection{Proof of Theorem \ref{main} (3)}

Assume $\mu$ satisfies the ROSC.  In light of Theorem \ref{main} parts (1) and (2), to prove part (3) we only need to prove an upper bound in the case $q > 1$ and a lower bound in the case $q < 1$.
\\ \\
\textbf{Upper bound in the case $q > 1$.}  Examining the proof of the upper bound for $q \in [0,1]$, it is evident that the only place we needed the fact that $q \in [0,1]$ was to obtain
\[
D_\delta^q(\mu)  \leq  \sum_{\textbf{\emph{i}} \in \mathcal{I}_\delta}D_\delta^q(\mu_\textbf{\emph{i}}).
\]
For $q > 1$ we will use the ROSC to prove that
\[
D_\delta^q(\mu)  \lesssim  \sum_{\textbf{\emph{i}} \in \mathcal{I}_\delta}D_\delta^q(\mu_\textbf{\emph{i}})
\]
which is clearly sufficient to complete the proof.  Lemma \ref{inequality1} implies that
\[
\Bigg(\sum_{\textbf{\emph{i}} \in \mathcal{I}_\delta} \mu_{\textbf{\emph{i}}} (Q)\Bigg)^q \lesssim_{k,q} \sum_{\textbf{\emph{i}} \in \mathcal{I}_\delta} \mu_{\textbf{\emph{i}}} (Q)^q
\]
where
\[
k \ := \ \lvert \{\textbf{\emph{i}} \in \mathcal{I}_\delta : \mu_\textbf{\emph{i}}(Q) >0 \} \rvert.
\]
Thus, if we can uniformly bound $k$, for all $\delta$ and $Q \in \mathcal{D}_\delta$, then we are done.  Let $\delta \in (0,1]$ and $Q \in \mathcal{D}_\delta$.  Also, let $R$ be the open rectangle used in the ROSC and let $\theta$ denote the length of the shortest side of $R$.  Finally, let
\[
M = \min\big\{n \in \mathbb{N} : n \geq (\alpha_{\min}\theta)^{-1}+2\big\}.
\]
Since $\{S_\textbf{\emph{i}}(R)\}_{\textbf{\emph{i}}\in \mathcal{I}_\delta}$ is a collection of pairwise disjoint open rectangles each with shortest side having length at least $\alpha_{\min} \delta \theta$, it is clear that $D$ can intersect no more than $M^2$ of the sets $\{ F_{\textbf{\emph{i}}}\}_{\textbf{\emph{i}}\in \mathcal{I}_\delta}$.  Now since for each $\textbf{\emph{i}}$, $\text{supp} \mu_{\textbf{\emph{i}}} = F_{\textbf{\emph{i}}}$, it follows that $k \leq M^2$ completing the proof. \hfill $\qed$
\\ \\
\textbf{Lower bound in the case $q \in [0,1)$.}  Similar to above, finding a uniform bound for $k$ allows us to apply Lemma \ref{inequality1} to obtain
\[
\Bigg(\sum_{\textbf{\emph{i}} \in \mathcal{I}_\delta} \mu_{\textbf{\emph{i}}} (Q)\Bigg)^q \gtrsim_{k,q} \sum_{\textbf{\emph{i}} \in \mathcal{I}_\delta} \mu_{\textbf{\emph{i}}} (Q)^q
\]
and then the rest of the argument is identical to the $q \geq 1$ case. \hfill $\qed$

\section{Proofs concerning closed forms}

\subsection{Proof of Lemma \ref{notmiddle}} \label{notmiddleproof}
 Firstly, the situation
\[
\min\{\gamma_A(q), \gamma_B(q)\} < \tau_1(q)+\tau_2(q) < \max\{\gamma_A(q), \gamma_B(q)\}
\]
is not possible because
\[
 \sum_{i \in \mathcal{I}} p_i^q \, c_i^{\tau_1(q)} \, d_i^{(\tau_1(q)+\tau_2(q))-\tau_1(q)}   \ = \   \sum_{i \in \mathcal{I}} p_i^q \, c_i^{\tau_1(q)} \, d_i^{\tau_2(q)}   \ = \    \sum_{i \in \mathcal{I}} p_i^q \, d_i^{\tau_2(q)} \, c_i^{(\tau_1(q)+\tau_2(q))-\tau_2(q)} 
\]
and so if $\gamma_A(q) \leq \tau_1(q)+\tau_2(q)$, then so must $\gamma_B(q) \leq \tau_1(q)+\tau_2(q)$ with the reverse situation analogous. The differentials case is similar.  Note that $\gamma_A'(1)$ and $\gamma_B'(1)$ are given by the unique values of $s$ which make the following two expressions equal to zero respectively:
\[
\sum_{i \in \mathcal{I}} p_i \log \Big(p_i  c_i^{\tau'_1(1)} \, d_i^{s-\tau'_1(1)} \Big)
\]
and
\[
\sum_{i \in \mathcal{I}} p_i \log \Big( p_i \, d_i^{\tau'_2(1)} \, c_i^{s-\tau'_2(1)}  \Big).
\]
Since both expressions are strictly decreasing in $s$ and are both equal to
\[
\sum_{i \in \mathcal{I}} p_i \log \Big( p_i \, c_i^{\tau'_1(1)} \, d_i^{\tau'_2(1)} \Big)
\]
when evaluated at $s=\tau'_1(1)+\tau'_2(1)$ we deduce that if $\gamma_A'(1) \leq \tau_1'(1)+\tau_2'(1)$, then so must $\gamma_B'(1) \leq \tau_1'(1)+\tau_2'(1)$ with the reverse situation analogous. \hfill \qed

\subsection{Proof of Theorem \ref{closedform}} \label{closedformproof}

Let $\mu$ be of separated type and fix $q \geq 0$.  First let us deal with the case when $\max\{\gamma_A(q), \gamma_B(q)\} \leq \tau_1(q)+\tau_2(q)$ and assume without loss of generality that $\gamma_A(q) \leq \gamma_B(q)$.  Observe that for all $k \in \mathbb{N}$,
\[
1 \ = \  \sum_{i \in \mathcal{I}} p_i^q \, d_i^{\tau_2(q)} \, c_i^{\gamma_B(q)-\tau_2(q)}  \ = \  \sum_{\textbf{\emph{i}} \in \mathcal{I}^k} p(\textbf{\emph{i}})^q \, d_{\textbf{\emph{i}}}^{\tau_2(q)} \, c_{\textbf{\emph{i}}}^{\gamma_B(q)-\tau_2(q)}  \ \leq  \ \sum_{\textbf{\emph{i}} \in \mathcal{I}^k} \psi^{\gamma_B(q),q}( \textbf{\emph{i}} )
\]
which implies that $\gamma_B(q) \leq \gamma_k(q)$ and passing to the limit yields $\gamma_B(q) \leq \gamma(q)$.  For the reverse inequality let $\varepsilon>0$ and choose $k_0 \in \mathbb{N}$ such that for all $k \geq k_0$, $\gamma_k(q) \geq \gamma(q)-\varepsilon$.  For all $k \geq k_0$ we have
\begin{eqnarray*}
1 \ = \  \sum_{\textbf{\emph{i}} \in \mathcal{I}^k} \psi^{\gamma_k(q),q}( \textbf{\emph{i}} )&\leq& \sum_{\textbf{\emph{i}} \in \mathcal{I}^k} p(\textbf{\emph{i}})^q \, c_{\textbf{\emph{i}}}^{\tau_1(q)} \, d_{\textbf{\emph{i}}}^{\gamma_k(q)-\tau_1(q)} \ + \  \sum_{\textbf{\emph{i}} \in \mathcal{I}^k} p(\textbf{\emph{i}})^q \, d_{\textbf{\emph{i}}}^{\tau_2(q)} \, c_{\textbf{\emph{i}}}^{\gamma_k(q)-\tau_2(q)} \\ \\
&=& \Bigg( \sum_{i \in \mathcal{I}} p_i^q \, c_i^{\tau_1(q)} \, d_i^{\gamma_k(q)-\tau_1(q)} \Bigg)^k \ + \ \Bigg(\sum_{i \in \mathcal{I}} p_i^q \, d_i^{\tau_2(q)} \, c_i^{\gamma_k(q)-\tau_2(q)} \Bigg)^k\\ \\
&\leq& \Bigg( \sum_{i \in \mathcal{I}} p_i^q \, c_i^{\tau_1(q)} \, d_i^{\gamma(q)-\varepsilon-\tau_1(q)} \Bigg)^k \ + \ \Bigg(\sum_{i \in \mathcal{I}} p_i^q \, d_i^{\tau_2(q)} \, c_i^{\gamma(q)-\varepsilon-\tau_2(q)} \Bigg)^k.
\end{eqnarray*}
Since this is true for arbitrarily large $k$ and the expressions inside the large brackets in the last line of the above do not depend on $k$, at least one of them must be greater than or equal to $1$.  This yields $\gamma(q)-\varepsilon \leq \max\{\gamma_A(q), \gamma_B(q)\} = \gamma_B(q)$ and letting $\varepsilon$ tend to 0 completes the proof.
\\ \\
The situation where $\min\{\gamma_A(q), \gamma_B(q)\} \geq \tau_1(q)+\tau_2(q)$ is more challenging.  Assume temporarily that $\gamma_A(q) \leq \gamma_B(q)$ and observe that, similar to above, for all $k \in \mathbb{N}$,
\[
1 \ = \  \sum_{i \in \mathcal{I}} p_i^q \, c_i^{\tau_1(q)} \, d_i^{\gamma_A(q)-\tau_1(q)}  \ = \  \sum_{\textbf{\emph{i}} \in \mathcal{I}^k} p(\textbf{\emph{i}})^q \, c_{\textbf{\emph{i}}}^{\tau_1(q)} \, d_{\textbf{\emph{i}}}^{\gamma_A(q)-\tau_1(q)}  \ \geq  \ \sum_{\textbf{\emph{i}} \in \mathcal{I}^k} \psi^{\gamma_A(q),q}( \textbf{\emph{i}} )
\]
which implies that $\gamma_A(q) \geq \gamma_k(q)$ and passing to the limit yields $\gamma_A(q) \geq \gamma(q)$. The reverse inequality is considerably more difficult to handle and indeed we can only prove it if one of (1) or (2) in the statement of Theorem \ref{closedform} hold.   Assume that (1) is satisfied, i.e.,
\begin{equation} \label{Aassumption}
\sum_{i \in \mathcal{I}} p_i^q \, c_i^{\tau_1(q)} \, d_i^{\gamma_A(q)-\tau_1(q)} \, \log \big( c_i/d_i\big) \geq 0.
\end{equation}
We will prove that $\gamma(q) \geq \gamma_A(q) \geq \min\{\gamma_A(q), \gamma_B(q)\}$.  If (2) is satisfied, then the proof proceeds in an analogous fashion but using $\gamma_B$ instead of $\gamma_A$. We will use an `approximating from within' technique, somewhat inspired by \cite[Lemma 4.3]{fjs}.  The key is to find a subsystem which can `carry the pressure' and for which the singular value function is multiplicative.  We will use a version of Stirling's approximation for the logarithm of large factorials.  This states that for all $n \in \mathbb{N} \setminus \{1\}$ we have
\begin{equation} \label{stirling}
n \log n - n  \ \leq \ \log n! \ \leq \  n \log n - n +\log n.
\end{equation}
For $i \in \mathcal{I}$, let
\[
\theta_i = p_i c_i^{\tau_1(q)} d_i^{\gamma_A(q)-\tau_1(q)} \in (0,1)
\]
observing that
\[
\sum_{i \in \mathcal{I}} \theta_i \ = \ 1.
\]
For $k \in \mathbb{N}$, let
\[
n(k) = \sum_{i \in \mathcal{I}} \lfloor\theta_ik \rfloor \in \mathbb{N}
\]
and note that $k-\lvert \mathcal{I} \rvert \leq n(k) \leq k$.  Consider the $n(k)$th iteration of $\mathcal{I}$ and let
\[
\mathcal{J}_k = \Big\{ \textbf{\emph{j}} = (j_1, \dots, j_{n(k)}) \in \mathcal{I}^{n(k)} : \#\{m : j_m = i\} = \lfloor \theta_i k \rfloor \text{ for each $i \in \mathcal{I}$}\Big\}.
\]
It is straightforward to see that 
\begin{equation} \label{combina}
\lvert \mathcal{J}_k \rvert = \frac{n(k)!}{\prod_{i \in \mathcal{I}} \lfloor \theta_ik \rfloor !}
\end{equation}
is just a standard multinomial coefficient and for each $\textbf{\emph{j}} \in \mathcal{J}_k$ we have
\[
p_\textbf{\emph{j}} = \prod_{i \in \mathcal{I}} p_i^{\lfloor \theta_ik \rfloor} =: p,
\]
\[
c_\textbf{\emph{j}} = \prod_{i \in \mathcal{I}} c_i^{\lfloor \theta_ik  \rfloor} =: c
\]
and
\[
d_\textbf{\emph{j}} = \prod_{i \in \mathcal{I}} d_i^{\lfloor \theta_ik  \rfloor} =:  d.
\]
It follows from (\ref{Aassumption}) that $c \geq d$, from which we obtain that for all $\textbf{\emph{i}} \in \mathcal{J}_k$, we have
\[
\psi^{\gamma_A(q),q}(\textbf{\emph{i}}) \ = \   p^q \, c^{\tau_1(q)} \, d^{\gamma_A(q)-\tau_1(q)} \ = \   \prod_{i \in \mathcal{I}} \Bigg( \Big(p_i^{\lfloor \theta_ik \rfloor} \Big)^q \, \Big(c_i^{\lfloor \theta_ik \rfloor} \Big)^{\tau_1(q)} \, \Big(d_i^{\lfloor \theta_ik \rfloor} \Big)^{\gamma_A(q)-\tau_1(q)}  \Bigg) \ = \ \prod_{i \in \mathcal{I}} \theta_i^{{\lfloor \theta_ik \rfloor}}.
\]
This is the only part of the proof where we use (\ref{Aassumption}).  It follows that, for all $k > \max \{2 \max_i (\theta_i^{-1}), \lvert \mathcal{I} \rvert \}$,
\begin{eqnarray*}
\log \Big(\Psi_{n(k)}^{\gamma_A(q),q}\Big)^{1/n(k)} &\geq& \frac{1}{n(k)} \,  \log \Bigg(\sum_{\textbf{\emph{i}} \in \mathcal{J}_{k}} \psi^{\gamma_A(q),q}(\textbf{\emph{i}})  \Bigg) \\ \\
&=& \frac{1}{n(k)} \,  \log \Bigg(\lvert \mathcal{J}_{k} \rvert \ \prod_{i \in \mathcal{I}}  \theta_i^{{\lfloor \theta_ik \rfloor}} \Bigg) \\ \\
&=& \frac{1}{n(k)} \, \Bigg( \log n(k)!  -   \sum_{i \in \mathcal{I}} \log  \lfloor \theta_ik \rfloor !  +   \sum_{i \in \mathcal{I}} \lfloor \theta_ik \rfloor      \log  \theta_i \Bigg) \qquad \qquad  \text{by (\ref{combina})}\\ \\
&\geq& \frac{1}{n(k)} \, \Bigg( n(k) \log n(k) - n(k)  
- \sum_{i \in \mathcal{I}}  \lfloor \theta_ik \rfloor  \log  \lfloor \theta_ik \rfloor 
  +  \sum_{i \in \mathcal{I}}  \lfloor \theta_ik \rfloor  
- \sum_{i \in \mathcal{I}} \log  \lfloor \theta_ik \rfloor \\ \\
  &\,& \qquad \qquad \qquad  +   \sum_{i \in \mathcal{I}} \lfloor \theta_ik \rfloor      \log  \theta_i \Bigg)
\qquad \qquad  \text{by Stirling's approximation (\ref{stirling})} \\ \\
&\geq& \frac{1}{n(k)} \, \Bigg( n(k) \log n(k) 
- \sum_{i \in \mathcal{I}}  \lfloor \theta_ik \rfloor  \log   k  - \sum_{i \in \mathcal{I}}  \lfloor \theta_ik \rfloor  \log  \theta_i 
- \sum_{i \in \mathcal{I}} \log  \lfloor \theta_ik \rfloor +    \sum_{i \in \mathcal{I}} \lfloor \theta_ik \rfloor      \log  \theta_i \Bigg)\\ \\
&\geq& \frac{1}{n(k)} \, \Bigg( n(k) \log n(k) 
- n(k)  \log   k 
- \sum_{i \in \mathcal{I}} \log  \lfloor \theta_ik \rfloor \Bigg)\\ \\
&\geq& \log \bigg(\frac{k-\lvert \mathcal{I} \rvert}{k} \bigg)
- \frac{1}{k-\lvert \mathcal{I} \rvert}  \sum_{i \in \mathcal{I}} \log \theta_ik  \\ \\
&\to& 0
\end{eqnarray*}
as $k \to \infty$, which proves that
\[
P\big(\gamma_A(q),q \big) = \lim_{k \to \infty} \Big(\Psi_{n(k)}^{\gamma_A(q),q}\Big)^{1/n(k)} \geq 1
\]
which yields $\gamma(q) \geq \gamma_A(q)$ giving the result. Finally, we tie up the rest of the simple details in the Proposition.  If $c_i \geq d_i$ for all $i \in \mathcal{I}$, then
\[
\psi^{s,q}(\textbf{\emph{i}}) = p(\textbf{\emph{i}})^q \, c_{\textbf{\emph{i}}}^{\tau_1(q)} \, d_{\textbf{\emph{i}}}^{\gamma_k(q)-\tau_1(q)}
\]
for all $\textbf{\emph{i}} \in \mathcal{I}^*$, yielding $\gamma_k(q) = \gamma_A(q)$ for all $k$.  The situation where $d_i \geq c_i$ for all $i \in \mathcal{I}$ is similar and omitted.   \hfill \qed

\subsection{Proof of Theorem \ref{diffat1}} \label{diffat1proof}

First observe that $\gamma_A$, $\gamma_B$ and $(\tau_1+\tau_2)$ are all differentiable at 1 and therefore they must all have tangents at 1 intersecting at $(1,0)$ in the plane.  Also, $\gamma$ is convex and so must at least have left and right derivatives at $q=1$, with corresponding left and right tangents also meeting at $(0,1)$.  Since it is impossible for the tangent of $(\tau_1+\tau_2)$ to lie inbetween the other two tangents corresponding to $\gamma_A$ and $\gamma_B$ by Lemma \ref{notmiddle}, it must be either the steepest or the shallowest of the three. Assume we are in the first case, i.e. $ \min \{\gamma_A'(1), \gamma_B'(1)\} \geq \tau'_1(1)+\tau'_2(1)$,  and assume without loss of generality that $\min\{\gamma'_A(1), \gamma'_B(1)\}=\gamma'_A(1)$.  It follows that
\[
\tau_1(q)+\tau_2(q) \geq \gamma_A(q) \geq \gamma_B(q)
\]
for values of $q$ sufficiently close to, \emph{but less than}, 1, and so Theorem \ref{closedform} implies that $\gamma'_-(1) = \gamma_A(1)$. For values of $q$ sufficiently close to, \emph{but greater than}, 1, we have
\[
\tau_1(q)+\tau_2(q) \leq \gamma_A(q) \leq \gamma_B(q)
\]
and so Theorem \ref{closedform} tells us that $\gamma(q) \leq \gamma_A(q)$.  This implies that the right tangent of $\gamma$ lies inbetween the tangent of $(\tau_1+\tau_2)$ and the tangent of $\gamma_A$, but convexity guarantees that the right derivative of $\gamma$ is greater than or equal to the left derivative which is equal to $\gamma_A'(1)$.  Hence $\gamma'_+(1) = \gamma_A(1)$. The second case is the same apart from that we determine the right derivative first and then use convexity to prove equality with the left derivative. \hfill \qed

\section{Proofs concerning convergence of derivatives}

\subsection{Proof of Lemma \ref{additivediff}} \label{additivediffproof}

Parts (1) and (2) can be proved in an almost identical way to Lemma \ref{additive} (a) and so we omit the details.  We will now prove part (3).  The proof of part (4) is similar and also omitted.
\\ \\
(3)  Suppose $\gamma(q) = \tau_1(q) + \tau_2(q)$ and $s \leq \tau'_1(q) + \tau'_2(q)$.  It follows that $\gamma(q) = \gamma_k(q)$ for all $k \in \mathbb{N}$ and hence
\begin{eqnarray*}
\hat \Psi_{k+l}^{s,q} &=& \sum_{\textbf{\emph{i}} \in \mathcal{I}^{k+l}}  \psi^{\gamma_{k+l}(q),q}(\textbf{\emph{i}}) \log \hat \psi^{s,q}(\textbf{\emph{i}}) \\ \\
 &=& \sum_{\textbf{\emph{i}} \in \mathcal{I}^{k}}\sum_{\textbf{\emph{j}} \in \mathcal{I}^{l}}  \psi^{\gamma(q),q}(\textbf{\emph{i}} \textbf{\emph{j}}) \log \hat \psi^{s,q}(\textbf{\emph{i}}\textbf{\emph{j}}) \\ \\
 &\leq& \sum_{\textbf{\emph{i}} \in \mathcal{I}^{k}}\sum_{\textbf{\emph{j}} \in \mathcal{I}^{l}}  \psi^{\gamma(q),q}(\textbf{\emph{i}}) \psi^{\gamma(q),q}( \textbf{\emph{j}}) \Big( \log \hat \psi^{s,q}(\textbf{\emph{i}}) + \log \hat \psi^{s,q}(\textbf{\emph{j}}) \Big)  \\ \\
 &=& \sum_{\textbf{\emph{i}} \in \mathcal{I}^{k}} \psi^{\gamma(q),q}(\textbf{\emph{i}})  \log \hat \psi^{s,q}(\textbf{\emph{i}}) \sum_{\textbf{\emph{j}} \in \mathcal{I}^{l}}  \psi^{\gamma(q),q}(\textbf{\emph{j}})
\ + \
\sum_{\textbf{\emph{i}} \in \mathcal{I}^{k}} \psi^{\gamma(q),q}(\textbf{\emph{i}})  \sum_{\textbf{\emph{j}} \in \mathcal{I}^{l}}  \psi^{\gamma(q),q}(\textbf{\emph{j}})  \log \hat \psi^{s,q}(\textbf{\emph{j}})\\ \\
&=& \sum_{\textbf{\emph{i}} \in \mathcal{I}^{k}} \psi^{\gamma_k(q),q}(\textbf{\emph{i}})  \log \hat \psi^{s,q}(\textbf{\emph{i}}) 
\ + \
  \sum_{\textbf{\emph{j}} \in \mathcal{I}^{l}}  \psi^{\gamma_l(q),q}(\textbf{\emph{j}})  \log \hat \psi^{s,q}(\textbf{\emph{j}})\\ \\
&=& \hat \Psi_{k}^{s,q} + \hat \Psi_{l}^{s,q}
\end{eqnarray*}
completing the proof. \hfill \qed

\subsection{Proof of Lemma \ref{Phat}} \label{Phatproofs}

(1)  Let $s, t \in \mathbb{R}$.  We have
\begin{eqnarray*}
\hat  P(s+t, q) &=& \lim_{k \to \infty} \frac{1}{k} \sum_{\textbf{\emph{i}} \in \mathcal{I}^k}  \psi^{\gamma_{k}(q),q}  (\textbf{\emph{i}}) \log  \Big( p(\textbf{\emph{i}}) \alpha_1 (\textbf{\emph{i}})^{\tau_{\textbf{\emph{i}}}'(q)} \alpha_2(\textbf{\emph{i}})^{s+t -\tau_{\textbf{\emph{i}}}'(q)} \Big) \\ \\
&\leq&  \lim_{k \to \infty} \frac{1}{k} \sum_{\textbf{\emph{i}} \in \mathcal{I}^k} \psi^{\gamma_{k}(q),q}(\textbf{\emph{i}}) \log  \Big( \max\{\alpha_{ \min}^{ks},\alpha_{ \max}^{ks}\}  p(\textbf{\emph{i}}) \alpha_1 (\textbf{\emph{i}})^{\tau_{\textbf{\emph{i}}}'(q)} \alpha_2(\textbf{\emph{i}})^{t -\tau_{\textbf{\emph{i}}}'(q)} \Big) \\ \\
&=&  \lim_{k \to \infty} \frac{1}{k} \sum_{\textbf{\emph{i}} \in \mathcal{I}^k}  \psi^{\gamma_{k}(q),q}(\textbf{\emph{i}}) \log  \Big(  p(\textbf{\emph{i}}) \alpha_1 (\textbf{\emph{i}})^{\tau_{\textbf{\emph{i}}}'(q)} \alpha_2(\textbf{\emph{i}})^{t -\tau_{\textbf{\emph{i}}}'(q)} \Big) \\ \\
&\,& \qquad \qquad \qquad  \qquad \qquad \qquad + \max\{s \log \alpha_{ \min}, s \log \alpha_{ \max}\} \lim_{k \to \infty} \frac{1}{k} \sum_{\textbf{\emph{i}} \in \mathcal{I}^k}  \psi^{\gamma_{k}(q),q}(\textbf{\emph{i}})\\ \\
&=& \hat P(t,q) \ + \  \max\{s \log \alpha_{ \min}, s \log \alpha_{ \max}\}
\end{eqnarray*}
The proof of the left hand inequality is similar and omitted.
\\ \\
(2)  Let $q \geq 0$.  The continuity of $\hat P(s,q)$ in $s$ follows immediately from (1).
\\ \\
(3)  Let $q \geq 0$, $t \in \mathbb{R}$ and $\varepsilon > 0$. By (1) we have
\[
\hat P(s+\varepsilon, q) \leq \hat P(s, q) + \max\{\varepsilon \log \alpha_{ \min}, \varepsilon \log \alpha_{ \max}\} < \hat P(s, q) 
\]
and so $ \hat P(s,q)$ is strictly decreasing in $s$.
\\ \\
(4) It follows from (1) that $\hat P(s,q) \to \infty$ as $s \to -\infty$ and $\hat P(s,q) \to -\infty$ as $s \to \infty$.  This combined with parts (2) and (3), imply that there is a unique value of $s \in \mathbb{R}$ for which $\hat P(s,q)=0$.
\\ \\
(5) This follows easily since $\hat \Psi_k^{\gamma_k'(q), q} = 0$ and the fact that $\tfrac{1}{k} \hat \Psi_k^{s,q} \to \hat P(s,q)$ as $k \to \infty$. \hfill \qed

\subsection{Proof of Theorem \ref{generaldiffest}} \label{generaldiffestproof}

(1) Suppose $\gamma_1'(1) \leq \tau_1'(1)+\tau_2'(1)$.  This implies that there exists $\varepsilon>0$ such that for all $q \in [1,1+\varepsilon)$ we have $\gamma_1(q) \leq \tau_1(q)+\tau_2(q)$.  This means that in this interval the $\gamma_k$ are always upper estimates for $\gamma$  and are bounded above by $(\tau_1+\tau_2)$.   Thus, for all $q \in [1,1+\varepsilon)$ and all $k \in \mathbb{N}$,
\[
\tau_1(q) +\tau_2(q) \geq   \gamma_k(q) \geq \gamma(q).
\]
Since all these functions evaluate to 0 at $q=1$ it follows by taking right derivative at 1 that for all $k \in \mathbb{N}$
\[
\gamma_k'(1) = (\gamma_k)_+'(1) \geq \gamma_+'(1),
\]
which gives the result since this estimate holds for all $k \in \mathbb{N}$.
\\ \\
(2) This is proved in a similar manner to part (1) except that we use a small interval to the left of 1 and estimate the left derivative of $\gamma$.  The details are omitted. \hfill \qed

\vspace{10mm}

\addcontentsline{toc}{section}{Acknowledgements}

\begin{centering}

\textbf{Acknowledgements}

The author was supported by the EPSRC grant EP/J013560/1.  This work was started whilst the author was an EPSRC funded PhD student at the University of St Andrews and he expresses his gratitude for the support he found there. Finally he thanks Mark Pollicott for useful discussions regarding estimating pressure and De-Jun Feng for pointing out some interesting references.

\end{centering}

\addcontentsline{toc}{section}{References}


\begin{thebibliography}{99}

\bibitem[B]{baranski}
K.~Bara\'nski.
Hausdorff dimension of the limit sets of some planar geometric constructions,
{\em Adv. Math.}, {\bf 210}, (2007), 215--245.


\bibitem[BFe]{barralfeng}
J. Barral and D.-J. Feng.
Multifractal formalism for almost all self-affine measures,
{\em Comm. Math. Phys.}, {\bf 318}, (2013), 473--504.

\bibitem[F1]{affine}
K.~J. Falconer.
 The Hausdorff dimension of self-affine fractals,
 {\em Math. Proc. Camb. Phil. Soc.}, {\bf 103}, (1988), 339--350.

\bibitem[F2]{genericaffinemeasure}
K.~J. Falconer.
Generalized dimensions of measures on self-affine sets,
\emph{Nonlinearity}, {\bf12}, (1999), 877--891.

\bibitem[F3]{falconer}
K.~J. Falconer.
{\em Fractal Geometry: Mathematical Foundations and Applications},
John Wiley, 2nd Ed., 2003.

\bibitem[F4]{almostselfaffine}
K.~J. Falconer.
Generalized dimensions of measures on almost self-affine sets,
\emph{Nonlinearity}, {\bf 23}, (2010), 1047--1069. 


\bibitem[FM]{miao}
K.~J. Falconer and J. Miao.
Dimensions of self-affine fractals and multifractals generated by upper-triangular matrices,
\emph{Fractals}, {\bf 15}, (2007), 289--299. 





\bibitem[Fe]{fengsmooth}
D.-J. Feng.
Smoothness of the $L^q$-spectrum of self-similar measures with overlaps,
{\em J. London Math. Soc.}, {\bf 68}, (2003), 102--118.


\bibitem[FeW]{fengaffine}
D.-J. Feng and Y. Wang.
A class of self-affine sets and self-affine measures,
{\em J. Fourier Anal. Appl.}, {\bf11}, (2005), 107--124.

\bibitem[FJS]{fjs}
A. Ferguson, T. Jordan, P. Shmerkin.
The Hausdorff dimension of the projections of self-affine carpets,
\emph{Fund. Math.}, {\bf209}, (2010), 193--213.

\bibitem[Fr]{me_box}
J. M. Fraser.
On the packing dimension of box-like self-affine sets in the plane,
{\em Nonlinearity}, {\bf 25}, (2012), 2075--2092.

\bibitem[FrO]{me_random}
J.~M. Fraser and L. Olsen.
 Multifractal spectra of random self-affine multifractal Sierpi\'nski sponges in $\mathbb{R}^d$,
 \emph{Indiana Univ. Math. J.}, {\bf 60}, (2011), 937--984.

\bibitem[JR]{jordanrams}
T. Jordan and M. Rams.
 Multifractal analysis for Bedford-McMullen carpets,
 \emph{Math. Proc. Cambridge Philos. Soc.}, {\bf 150}, (2011), 147--156.



\bibitem[K]{king}
J. F. King.
The singularity spectrum for general Sierpi\'nski carpets,
\emph{Adv. Math.}, {\bf 116}, (1995), 1--11.


\bibitem[N]{ngai}
S.-M. Ngai.
A dimension result arising from the $L^q$-spectrum of a measure,
{\em Proc. Amer. Math. Soc.}, {\bf 125}, (1997), 2943--2951.




\bibitem[NW]{gdaffine}
T.-J. Ni and Z.-Y. Wen.
The $L^q$-spectrum of a class of graph directed self-affine measures,
\emph{Dyn. Syst.}, {\bf 24}, (2009), 517--536.


\bibitem[O1]{multifractalformalism}
L. Olsen.
A multifractal formalism,
{\em Adv. Math.}, {\bf 116}, (1995), 82--196.

\bibitem[O2]{sponges}
L. Olsen.
 Self-affine multifractal Sierpi\'nski sponges in $\mathbb{R}^d$,
{\em Pacific J. Math}, {\bf 183}, (1998), 143--199.

\bibitem[O3]{multigeom}
L. Olsen.
Multifractal geometry,
\emph{Fractal Geometry and Stochastics II (Greifswald/Koserow, 1998) Prog.
Probab.}, {\bf 46}, 3--37, (Basel: Birkh\"auser).

\bibitem[O4]{randomsponges}
L. Olsen.
Random self-affine multifractal Sierpi\'nski sponges in $\mathbb{R}^d$,
{\em Monatshefte f\"ur Mathematik}, {\bf 162}, (2011), 89--117.


\bibitem[PS]{exists}
Y. Peres and B. Solomyak.
Existence of $L^q$-dimensions and entropy dimension for self-conformal measures, \emph{Indiana Univ. Math. J.}, {\bf 49}, (2000), 1603--1621.






\bibitem[R]{reeve}
H. W. J. Reeve.
The packing spectrum for Birkhoff averages on a self-affine repeller,
\emph{Ergodic Theory Dynam. Systems}, {\bf 32}, (2012), 1444--1470.







\bibitem[Ri]{riedi}
R.  Riedi.
An improved multifractal formalism and self-similar measures, \emph{J. Math. Anal. Appl.}, {\bf 189}, (1995), 462--490.


\bibitem[S]{stric}
R. Strichartz.
Self-Similar Measures and Their Fourier Transforms. III,
\emph{Indiana Univ. Math. J.}, {\bf 42}, (1993), 367--411



\bibitem[Y]{young}
L.-S. Young.
Dimension, entropy and Lyapunov exponents,
\emph{Ergodic Theory Dynam. Systems}, {\bf 2}, (1982), 109--124.




\end{thebibliography}
\end{document}